\newcommand{\arxiv}[1]{\href{http://arxiv.org/#1}{arXiv:#1}}
\newcommand*{\mailto}[1]{\href{mailto:#1}{\nolinkurl{#1}}}
\newtheorem{theorem}{Theorem}[section]
\newtheorem{lemma}[theorem]{Lemma}
\numberwithin{equation}{section}
\newtheorem{re}{Remark}[section]
\newtheorem{prop}{Proposition}[section]
\newtheorem{theo}{Theorem}[section]
\newtheorem{lem}{Lemma}[section]
\newtheorem{col}{Corollary}[section]
\newcommand{\be}{\begin{equation}}
\newcommand{\ee}{\end{equation}}
\newcommand\bes{\begin{eqnarray}} \newcommand\ees{\end{eqnarray}}
\newcommand{\bess}{\begin{eqnarray*}}
\newcommand{\eess}{\end{eqnarray*}}
\newcommand{\D}{\displaystyle}
\newcommand{\bu}{{\bf u}}
\newcommand{\bv}{{\bf v}}
\newcommand{\bw}{{\bf w}}
\newcommand{\equ}{\overset{\rm def}{=}}
\def\XXint#1#2#3{{\setbox0=\hbox{$#1{#2#3}{\int}$}
     \vcenter{\hbox{$#2#3$}}\kern-.5\wd0}}
\numberwithin{equation}{section}
\begin{document}

\title[Optimal decay for full compressible N-S equations]{Optimal decay for the full compressible Navier-Stokes system in critical $L^p$ Besov spaces}

%\author[Q. Bie]{Qunyi Bie$^*$}
\author[Q. Bie]{Qunyi Bie}
\address[Q. Bie]{College of Science $\&$  Three Gorges Mathematical Research Center, China Three Gorges University, Yichang 443002, PR China}
\email{\mailto{qybie@126.com}}

%\author[X. Deng]{Xuemei Deng}
%\address[X. Deng]{College of Science $\&$  Three Gorges Mathematical Research Center, China Three Gorges University, Yichang 443002, PR China}
%\email{\mailto{dxmeisx@126.com}}
%\author[H. Cui]{Haibo Cui}
%\address[H. Cui]{School of Mathematical Sciences, Huaqiao University, Quanzhou 362021,
% PR China}
%\email{\mailto{cuihaibo2000@163.com}}
%\urladdr{\url{http://www.mat.univie.ac.at/~grunert/}}
%\author[L. Li]{Li Li}
%\address[L. Li]{College of Science, China Three Gorges University, Yichang 443002, PR China}
%\email{\mailto{1412627041@qq.com}}
\author[Q. Wang]{Qiru Wang}
\address[Q. Wang]{School of Mathematics, Sun Yat-Sen University, Guangzhou 510275, PR China}
\email{\mailto{mcswqr@mail.sysu.edu.cn}}
%\urladdr{\url{http://www.mat.univie.ac.at/~gerald/}}
\author[Z.-A. Yao]{Zheng-an Yao}
\address[Z.-A. Yao]{School of Mathematics, Sun Yat-Sen University, Guangzhou 510275, PR China}
\email{\mailto{mcsyao@mail.sysu.edu.cn}}

%\thanks{$^*$ Corresponding author.}
%\thanks{Research Supported by the NNSF of China (nos.\,11871305, 11671406, 11971496).}
%\date\today
\keywords{Time decay estimates; compressible; full Navier-Stokes equations; $L^p$ critical spaces}
\subjclass[2010]{76N15, 35Q30, 35L65, 35K65}

\begin{abstract} Danchin and He (Math. Ann. 64: 1-38, 2016) recently established  the global existence  in critical $L^p$ type regularity framework for the $N$-dimensional $(N\geq 3)$ non-isentropic compressible Navier-Stokes equations. The purpose of this paper is to further investigate the large time behavior of solutions constructed by them. More precisely, we prove that if the initial data at the low frequencies additionally belong to some Besov space $\dot{B}_{2,\infty}^{-\sigma_1}$ with $\sigma_1\in (2-N/2, 2N/p-N/2]$, then the $\dot{B}_{p,1}^s$ norm of the critical global solutions exhibits the optimal decay  $(1+t)^{-\frac{N}{2}(\frac{1}{2}-\frac{1}{p})-\frac{s+\sigma_1}{2}}$ for suitable $p$ and $s$. The main tool we use is the pure energy argument without the spectral analysis, which enables us
to \emph{remove the smallness assumption} of initial data at the low-frequency.

\end{abstract}

\maketitle
%\tableofcontents

\section{Introduction} \label{s:1}
The  full compressible Navier-Stokes system  for $(t,x)\in \mathbb{R}_+\times\mathbb{R}^N$ reads as
\begin{equation}\label{1.1}
 \left\{\begin{array}{ll}\displaystyle\partial_t \rho+{\rm div}(\rho{\bf u})
=0,\\[1ex]
 \displaystyle\partial_t(\rho{\bf u})+{\rm div}(\rho{\bf u}\otimes{\bf u})+\nabla P={\rm div}{\bf\tau},\\[1ex]
 \displaystyle\partial_t\Big[\rho\Big(\frac{|\bu|^2}{2}+e\Big)\Big]+{\rm div}\left[\bu\Big(\rho\Big(\frac{|\bu|^2}{2}+e\Big)+P\Big)\right]={\rm div}({\bf\tau}\cdot\bu+\kappa\nabla\mathcal{T}),
 \end{array}
 \right.
 \end{equation}
where $\rho=\rho(t,x)\in\mathbb{R}_+, \bu=\bu(t, x)\in\mathbb{R}^N, P=P(t,x), e=e(t,x)\in\mathbb{R}_+$ and $\mathcal{T}=\mathcal{T}(t,x)$ denote, respectively,  the density, velocity, pressure, the internal energy per unit mass and the absolute temperature. The parameter $\kappa>0$ acting as a thermal conduction coefficient is assumed to be constant. The internal stress tensor ${\bf \tau}$ is given by
$$
\tau\equ\mu(\nabla\bu+(\nabla\bu)^T)+\lambda({\rm div}\bu){\bf I},
$$
in which ${\bf I}$ is the identity matrix, $(\nabla\bu)^T$ is the transpose matrix of $\nabla\bu$,  and the constants $\mu$ and $\lambda$ are  viscosity coefficients fulfilling the usual condition
$$\mu>0\,\,\,\,{\rm and}\,\,\,\, \nu\equ \lambda+2\mu>0.$$

From the second and third equations of \eqref{1.1}, it is easy to see  that
$$
\partial_t(\rho e)+{\rm div}(\rho\bu e)+P{\rm div}\bu-\kappa\Delta\mathcal{T}=\frac{\mu}{2}
 |\nabla\bu+(\nabla\bu)^T|^2+\lambda({\rm div}\bu)^2.
$$
In order to reformulate system \eqref{1.1} in terms of $\rho, \bu$ and $\mathcal{T}$ only, we suppose that
$e=e(\rho,\mathcal{T})$ satisfies Joule law
\be\label{1.9}
\partial_{\mathcal{T}}e=C_v\,\,\,\,{\rm with\,\,some\,\,positive\,\,constant}\,\,C_v
\ee
and that the pressure function $P=P(\rho, \mathcal{T})$ fulfills the following pressure laws
$$
P(\rho,\mathcal{T})=\pi_0(\rho)+\mathcal{T}\pi_1(\rho),
$$
where $\pi_0$ and $\pi_1$ are given smooth functions (for example, perfect gases when $\pi_0(\rho)=0$ and $\pi_1(\rho)=R\rho$ with $R>0$ or Van-der-Waals fluids when $\pi_0(\rho)=-\alpha\rho^2, \pi_1(\rho)=\beta\rho/(\delta-\rho)$ with $\alpha,\beta,\delta>0$). By using the Gibbs relations for the internal energy and the Helmholtz free energy, we have the following  Maxwell relation
$$
\rho^2\partial_{\rho}e(\rho, \mathcal{T})=P(\rho, \mathcal{T})-\mathcal{T}\partial_{\mathcal{T}}P(\rho, \mathcal{T})=\pi_0(\rho),
$$
and then system \eqref{1.1} may be rewritten as
\begin{equation}\label{1.2}
 \left\{\begin{array}{ll}\displaystyle\partial_t \rho+{\rm div}(\rho{\bf u})
=0,\\[1ex]
 \displaystyle\rho(\bu_t+\bu\cdot\nabla\bu)-\mu\Delta\bu-(\lambda+\mu)\nabla{\rm div}\bu+\nabla P=0,\\[1ex]
 \displaystyle\rho C_v(\partial_t\mathcal{T}+\bu\cdot\nabla\mathcal{T})+\mathcal{T}\pi_1(\rho){\rm div}\bu-\kappa\Delta\mathcal{T}=\frac{\mu}{2}
 |\nabla\bu+(\nabla\bu)^T|^2+\lambda({\rm div}\bu)^2.
 \end{array}
 \right.
 \end{equation}

We are concerned with the large time behavior of global solutions to the Cauchy problem of system \eqref{1.2} subject to
the initial condition
\begin{equation}\label{1.3}
  (\rho, \bu, \mathcal{T})|_{t=0}=(\rho_0(x), \bu_0(x), \mathcal{T}_0(x)), \,\,x\in \mathbb{R}^N,
\end{equation}
and focus on solutions that are close to some constant state $(\rho^\ast, {\bf 0}, \mathcal{T}^\ast)$ with $\rho^\ast>0$ and   $\mathcal{T}^\ast>0$, at infinity,  which fulfills the following linear stability condition:
\begin{equation}\label{1.4}
  \partial_\rho P(\rho^\ast, \mathcal{T}^\ast)>0\,\,\,\,{\rm and}\,\,\,\,\,\partial_\mathcal{T} P(\rho^\ast, \mathcal{T}^\ast)>0.
\end{equation}

Since the pioneering work by Matsumura  and Nishida \cite{matsumura1980initial}, many papers have been dedicated to system \eqref{1.1} in the case of solutions with high Sobolev regularity. In this paper, we focus on the study of  long time asymptotic behavior in the so-called \emph{critical regularity} framework. Here we observe  that system \eqref{1.1} is invariant by the transformation
\be\nonumber
\tilde{\rho}(t,x)=\rho(l^2t, lx),~~\tilde{\bf u}(t, x)=l{\bf u}(l^2t, lx),~~\tilde{\mathcal{T}}(t, x)=l^2{\mathcal{T}}(l^2t, lx),
\ee
 up to a change of the pressure law $\tilde{P}=l^2P$. A critical space is a space in which the norm is invariant under the scaling
$
(\tilde{e},\tilde{\bf f}, \tilde{g})(x)=(e(lx), l{\bf f}(lx), l^2{g}(lx)).
$

In the critical framework, there have been many results for the compressible (or incompressible) Navier-Stokes equations, see for example \cite{cannone1997a,  charve2010global,  chen2010global, danchin2000global, danchin2016fourier, danchin2016incompressible, danchin2016optimal, fujita1964on,  haspot2011existence, kozono1994semilinear,  okita2014optimal,  xin2018optimal, xu2019a}. In particular, concerning the large time asymptotic behavior of strong solutions for the compressible Navier-Stokes equations, Okita \cite{okita2014optimal} exploited low and high frequency decompositions to get the time decay rate for strong solutions in the $L^2$  critical framework and in dimension $N\geq 3$. In the survey paper \cite{danchin2016fourier},
 Danchin proposed another description of the time decay which allows to proceed with dimension $N\geq 2$ in the $L^2$ critical framework. Recently,  Danchin and Xu \cite{danchin2016optimal} extended the work of \cite{danchin2016fourier} and got the optimal time decay rate in the
 general $L^p$ type critical spaces and in any dimension $N\geq 2$. Later on,  Xu \cite{xu2019a} developed a general low-frequency condition for optimal decay estimates, where the regularity $\sigma_1$ of $\dot{B}_{2,\infty}^{-\sigma_1}$
 belongs to a whole range $(1-\frac{N}{2}, \frac{2N}{p}-\frac{N}{2}]$,  and the proof  depends mainly on the refined time-weighted energy approach in the Fourier semi-group framework. Very recently, originated from the ideas as in \cite{guo2012decay, strain2006almost}, Xin and Xu \cite{xin2018optimal} developed a new energy  argument to remove the usual smallness assumption of low frequencies.

 Let us also recall some important progress concerning non-isentropic compressible Navier-Stokes system \eqref{1.1} in the critical framework. Danchin \cite{danchin2001local}
and Chikami and Danchin \cite{chikami2015on} proved the local well-posedness to system \eqref{1.1} in the critical spaces $\dot{B}_{p,1}^{\frac{N}{p}}\times(\dot{B}_{p,1}^{\frac{N}{p}-1})^N\times\dot{B}_{p,1}^{\frac{N}{p}-2}$ if $N\geq 3$ and $1<p<N$.
As regards the global well-posedness issue, Danchin \cite{danchin2001global} obtained the global existence and uniqueness of solutions to system \eqref{1.1} in the $L^2$-type critical Besov spaces.  Recently, Danchin and He \cite{danchin2016incompressible} generalized the results of \cite{danchin2001global} to $L^p$-type critical Besov spaces, and derived the following global existence results:
\begin{theo}\label{th1}
  {\rm(}\cite{danchin2016incompressible}{\rm)} Let $\rho^\ast$ and $\mathcal{T}^\ast$ be two positive constants such that \eqref{1.4} is satisfied. Assume that $N\geq 3$, and that $p$ fulfills
  \begin{equation}\label{1.5}
    2\leq p<N\,\,\,\,{\rm and}\,\,\,p\leq 2N/(N-2).
  \end{equation}
There exist a constant $c=c(p,N,\lambda,\mu,P,\kappa,C_v,\rho^\ast,\mathcal{T}^\ast)$ and a universal integer $j_0\in \mathbb{Z}$ such that if $a_0\equ\rho_0-\rho^\ast$ is in $\dot{B}_{p,1}^{\frac{N}{p}}$, if $\bv_0\equ\bu_0$ is in $\dot{B}_{p,1}^{\frac{N}{p}-1}$, if $\theta_0\equ\mathcal{T}_0-\mathcal{T}^\ast$ is in $\dot{B}_{p,1}^{\frac{N}{p}-2}$ and if in addition $(a_0^\ell, \bv_0^\ell, \theta_0^\ell)\in \dot{B}_{2,1}^{\frac{N}{2}-1}$
{\rm(}with the notation $z^\ell\equ \dot{S}_{k_0+1}z$ and $z^h=z-z^\ell${\rm )}
with
$$
\mathcal{X}_{p,0}\equ \|(a_0, \bv_0, \theta_0)\|_{\dot{B}_{2,1}^{\frac{N}{2}-1}}^\ell
+\|a_0\|_{\dot{B}_{p,1}^{\frac{N}{p}}}^h+\|\bv_0\|_{\dot{B}_{p,1}^{\frac{N}{p}-1}}^h
+\|\theta_0\|_{\dot{B}_{p,1}^{\frac{N}{p}-2}}^h\leq c,
$$
then the Cauchy problem \eqref{1.2}-\eqref{1.3} admits a unique global-in-time solution $(\rho, \bu, \mathcal{T})$
with $\rho=\rho^\ast+a, \bu=\bv$ and $\mathcal{T}=\mathcal{T}^\ast+\theta$, where $(a, \bv, \theta)$ is in the space $X_p$ defined by
\begin{equation}\nonumber
\left.\begin{array}{ll}
(a,\bv,\theta)^\ell\in \widetilde{\mathcal{C}_b}(\mathbb{R}_+; \dot{B}_{2,1}^{\frac{N}{2}-1})\cap
L^1(\mathbb{R}_+; \dot{B}_{2,1}^{\frac{N}{2}+1}), \,\,\,a^h\in \widetilde{\mathcal{C}_b}(\mathbb{R}_+; \dot{B}_{p,1}^{\frac{N}{p}})\cap
L^1(\mathbb{R}_+; \dot{B}_{p,1}^{\frac{N}{p}}),\\[1ex]
\bv^h\in \widetilde{\mathcal{C}_b}(\mathbb{R}_+; \dot{B}_{p,1}^{\frac{N}{p}-1})\cap
L^1(\mathbb{R}_+; \dot{B}_{p,1}^{\frac{N}{p}+1}),\,\,\,\,\theta^h\in \widetilde{\mathcal{C}_b}(\mathbb{R}_+; \dot{B}_{p,1}^{\frac{N}{p}-2})\cap
L^1(\mathbb{R}_+; \dot{B}_{p,1}^{\frac{N}{p}}).
\end{array}
\right.
\end{equation}
Furthermore, we get for some constant $C=C(p,N,\lambda, \mu, \kappa, C_v, P, \rho^\ast, \mathcal{T}^\ast)$,
$$
\mathcal{X}_p(t)\leq C\mathcal{X}_{p,0},
$$
for any $t>0$, where
\begin{equation}\label{1.6}
\begin{split}
\mathcal{X}_p(t)&\equ \|(a, \bv, \theta)\|_{\widetilde{L}^\infty(\dot{B}_{2,1}^{\frac{N}{2}-1})}^\ell+
\|(a, \bv, \theta)\|_{{L}^1(\dot{B}_{2,1}^{\frac{N}{2}+1})}^\ell
+\|a\|_{\widetilde{L}^\infty(\dot{B}_{p,1}^{\frac{N}{p}})}^h
+\|a\|_{{L}^1(\dot{B}_{p,1}^{\frac{N}{p}})}^h\\[1ex]
&\quad+\|\bv\|_{\widetilde{L}^\infty(\dot{B}_{p,1}^{\frac{N}{p}-1})}^h
+\|\bv\|_{{L}^1(\dot{B}_{p,1}^{\frac{N}{p}+1})}^h+\|\theta\|_{\widetilde{L}^\infty(\dot{B}_{p,1}^{\frac{N}{p}-2})}^h
+\|\theta\|_{{L}^1(\dot{B}_{p,1}^{\frac{N}{p}})}^h.
\end{split}
\end{equation}
\end{theo}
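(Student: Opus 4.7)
The plan is to derive an a priori estimate of the form $\mathcal{X}_p(t) \le C(\mathcal{X}_{p,0} + \mathcal{X}_p(t)^2)$ combining low-frequency energy estimates in an $L^2$ Hilbertian framework with high-frequency $L^p$-based estimates, and then to run a standard approximation-compactness scheme. First I would pass to the perturbation variables $(a,\bv,\theta) \equ (\rho-\rho^\ast,\bu,\mathcal{T}-\mathcal{T}^\ast)$ and rewrite \eqref{1.2} as
\begin{equation}\nonumber
\left\{\begin{array}{l}
\partial_t a + \rho^\ast {\rm div}\bv = F(a,\bv,\theta),\\[1ex]
\partial_t \bv - \mathcal{A}\bv + \frac{\partial_\rho P^\ast}{\rho^\ast}\nabla a + \frac{\partial_{\mathcal{T}} P^\ast}{\rho^\ast}\nabla\theta = G(a,\bv,\theta),\\[1ex]
\partial_t \theta - \frac{\kappa}{\rho^\ast C_v}\Delta\theta + \frac{\mathcal{T}^\ast \pi_1(\rho^\ast)}{\rho^\ast C_v}{\rm div}\bv = H(a,\bv,\theta),
\end{array}\right.
\end{equation}
where $\mathcal{A}\bv \equ \frac{\mu}{\rho^\ast}\Delta\bv + \frac{\lambda+\mu}{\rho^\ast}\nabla{\rm div}\bv$ is the Lam\'e operator, the starred constants denote evaluation at $(\rho^\ast,\mathcal{T}^\ast)$, and $F,G,H$ collect all at-least-quadratic terms. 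The linearized semigroup exhibits a parabolic-hyperbolic coupling between the three unknowns at low frequencies, but damping on $a$ together with genuine parabolic smoothing on $(\bv,\theta)$ at high frequencies; this mixed behaviour is exactly what the hybrid functional $\mathcal{X}_p(t)$ in \eqref{1.6} is designed to capture.

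For the low-frequency part ($j \le k_0$) I would adapt the Lyapunov functional of \cite{danchin2001global}: after Littlewood-Paley localization, performing the standard $L^2$ energy estimate on each block and adding suitable cross terms of the form $(a_j\,|\,{\rm div}\bv_j)_{L^2}$ and $(\theta_j\,|\,{\rm div}\bv_j)_{L^2}$ produces, under the stability condition \eqref{1.4}, a quantity equivalent to $\|(a,\bv,\theta)_j\|_{L^2}^2$ whose time derivative controls $2^{2j}\|(a,\bv,\theta)_j\|_{L^2}^2$; summing with the weight $2^{j(N/2-1)}$ delivers the $\dot{B}_{2,1}^{N/2+1}$ dissipation required in \eqref{1.6}. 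For the high-frequency part ($j \ge k_0+1$) I would follow the effective-velocity approach of Haspot/Danchin and introduce
$$
w \equ \bv + \frac{1}{\nu}\nabla(-\Delta)^{-1}\Big(\frac{\partial_\rho P^\ast}{\rho^\ast} a + \frac{\partial_{\mathcal{T}} P^\ast}{\rho^\ast}\theta\Big),
$$
which cancels the pressure forcing in the momentum equation and converts the density equation into a damped transport equation with damping rate $\partial_\rho P^\ast/\nu$; an $L^p$ estimate on each Littlewood-Paley block of $a$ then yields $\|a\|^h_{L^1(\dot{B}_{p,1}^{N/p})}$, while the parabolic smoothing of the heat semigroup applied to $w$ and $\theta$ provides $\|\bv\|^h_{L^1(\dot{B}_{p,1}^{N/p+1})}$ and $\|\theta\|^h_{L^1(\dot{B}_{p,1}^{N/p})}$. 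Each nonlinearity $F,G,H$ is then bounded in the same hybrid norm via Bony's paraproduct decomposition together with the product laws and composition estimates in Besov spaces, applied to smooth functions of $(a,\theta)$ such as $1/(1+a/\rho^\ast)$ and $\pi_1(\rho^\ast+a)$. The restrictions \eqref{1.5} enter exactly here: $p \ge 2$ is what enables the Hilbertian low-frequency estimate, while $p \le 2N/(N-2)$ guarantees the embedding $\dot{B}_{2,1}^{N/2-1}\hookrightarrow\dot{B}_{p,1}^{N/p-1}$ needed to close the low-high interactions.

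The resulting inequality $\mathcal{X}_p(t)\le C(\mathcal{X}_{p,0}+\mathcal{X}_p(t)^2)$ immediately gives, by a standard continuity argument, the global bound $\mathcal{X}_p(t)\le 2C\mathcal{X}_{p,0}$ under the smallness assumption. To construct an actual solution I would apply the Friedrichs truncation $J_n$ both to the data and to the nonlinear terms, observe that local existence in $X_p$ for the truncated system is classical, and use the uniform a priori bound just established to extend the approximants globally; compactness together with the same product/composition estimates then produces a limit $(a,\bv,\theta)\in X_p$ solving \eqref{1.2}-\eqref{1.3}. Uniqueness follows by estimating the difference of two solutions in a Besov space of slightly lower regularity, so as to absorb the missing parabolic smoothing on $a$. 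The main obstacle I anticipate is the nonlinear treatment of the temperature equation: since $\theta$ sits two derivatives below $a$ in the functional setting, both the reactive term $\mathcal{T}\pi_1(\rho){\rm div}\bu$ and the quadratic dissipation $\frac{\mu}{2}|\nabla\bu+(\nabla\bu)^T|^2+\lambda({\rm div}\bu)^2$ must be absorbed without slack in the scaling, which forces the full use of both the high-frequency parabolic gain $L^1_t(\dot{B}_{p,1}^{N/p})$ on $\theta$ and a delicate accounting of the low-high paraproduct interactions.
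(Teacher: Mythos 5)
This statement is not proved in the paper at all: Theorem \ref{th1} is imported verbatim from Danchin--He \cite{danchin2016incompressible}, and the paper's own work only begins afterwards (the decay estimates of Theorem \ref{th2}). So your proposal can only be measured against the cited work and against the related machinery the paper reuses later (Lemmas \ref{lem2}--\ref{lem3}). At that level your outline is the right general strategy: a hybrid functional with an $L^2$ low-frequency energy built from Littlewood--Paley blocks plus cross terms under the stability condition \eqref{1.4}, a high-frequency analysis via an effective velocity turning the density equation into a damped transport equation together with parabolic maximal regularity for the velocity and temperature, closure of the nonlinearities by Bony decomposition, product and composition estimates, and then Friedrichs approximation, compactness and uniqueness at lower regularity.

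However, as a blind reconstruction it has concrete inaccuracies you should be aware of. First, your effective velocity $w=\bv+\nu^{-1}\nabla(-\Delta)^{-1}\bigl(\tfrac{\partial_\rho P^\ast}{\rho^\ast}a+\tfrac{\partial_{\mathcal T}P^\ast}{\rho^\ast}\theta\bigr)$ differs from the choice used in this paper (and in Haspot/Danchin--He), namely $\bw=\nabla(-\Delta)^{-1}(a-{\rm div}\bv)$ as in \eqref{3.102}, where the coupling $\gamma\nabla\theta$ is kept as a source term; incorporating $\theta$ into $w$ introduces an extra $\theta$-term in the damped transport equation for $a$ which you would still have to absorb (it is absorbable since $\theta^h\in L^1(\dot B^{N/p}_{p,1})$, but this needs to be said). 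Second, your justification of the constraint $p\le 2N/(N-2)$ is wrong: the embedding $\dot B^{N/2-1}_{2,1}\hookrightarrow \dot B^{N/p-1}_{p,1}$ holds for every $p\ge 2$; what actually requires $p\le 2N/(N-2)$ is the opposite, low-frequency direction, e.g. embeddings of the type $\dot B^{-\sigma_1}_{2,\infty}\hookrightarrow\dot B^{-\sigma_1+\frac{2N}{p}-N+1}_{p,\infty}$ and the $L^{p^\ast}$-based product estimates (with $\tfrac1p+\tfrac1{p^\ast}=\tfrac12$) used when both factors carry only $L^p$ regularity, exactly as in the proof of Proposition \ref{pr4.1}. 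Third, you never invoke $p<N$, yet it is essential for the full system: the temperature sits at regularity $\tfrac Np-2$, and the product laws for the $\theta$-nonlinearities (and already the local theory of Chikami--Danchin) require $(\tfrac Np-2)+\tfrac Np>0$, i.e. $p<N$; your closing remark correctly identifies the temperature terms as the delicate point, but the quantitative reason is this positivity-of-total-regularity condition, which should appear explicitly when closing the estimates for $\mathcal T\pi_1(\rho)\,{\rm div}\bu$ and the quadratic dissipation term.
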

The natural next problem is to investigate the large time asymptotic behavior of global solutions constructed above. In this respect, we could refer to the recent works \cite{danchin2018optimal, shi2019thelarge, zhai2019longtime}. Therein, Danchin and Xu \cite{danchin2018optimal} applied  Fourier analysis techniques to give precise description for the large time asymptotic behavior of solutions with the additional condition concerning the low frequencies of initial data. Shi and Xu \cite{shi2019thelarge}  further enlarged the range of the low regularity index assumption in \cite{danchin2018optimal}. Let us remark that in \cite{danchin2018optimal, shi2019thelarge} the additional condition  $\|(a_0, \bv_0, \theta_0)\|_{\dot{B}_{2,\infty}^{-\sigma_1}}^\ell$ is required to be small. At this point,  in the case of $N=3$, Zhai and Chen \cite{zhai2019longtime} applied some energy arguments developed by Xin and Xu \cite{xin2018optimal} to obtain the optimal decay rates without the smallness assumption. There, they assumed the additional condition $\|(a_0, \bv_0, \theta_0)\|_{\dot{B}_{2,1}^{-\sigma_1}}^\ell$ is bounded.  In this paper, motivated by the works \cite{danchin2018optimal,  guo2012decay, shi2019thelarge, strain2006almost, xin2018optimal,zhai2019longtime}, in the case of $N\geq 3$,  we intend to remove the smallness assumption for  $\|(a_0, \bv_0, \theta_0)\|_{\dot{B}_{2,\infty}^{-\sigma_1}}^\ell$ in \cite{danchin2018optimal, shi2019thelarge}. That is, on the condition that $\|(a_0, \bv_0, \theta_0)\|_{\dot{B}_{2,\infty}^{-\sigma_1}}^\ell$ is bounded, we are going to establish the optimal decay of solutions for system \eqref{1.1} in the $L^p$ critical Besov spaces.

\section{Main results}\label{s:2}
\setcounter{equation}{0}\setcounter{section}{2}\indent
As in \cite{danchin2018optimal},  taking $\mathcal{A}\equ \mu\Delta+(\lambda+\mu)\nabla{\rm div}, \rho=\rho^\ast(1+b)>0$ and $\mathcal{T}=\mathcal{T}^\ast+\mathfrak{T}$, we derive from system \eqref{1.2} that the triplet $(b,\bu,\mathfrak{T})$ satisfies
{\small \begin{equation}\label{2.1}
 \left\{\begin{array}{ll}\displaystyle\partial_t b+\bu\cdot\nabla b+(1+b){\rm div}\bu
=0,\\[2ex]
 \displaystyle\partial_t\bu+\bu\cdot\nabla\bu-\frac{\mathcal{A\bu}}{\rho^\ast(1+b)}
 +\frac{\partial_{\rho}P(\rho^\ast(1+b),\mathcal{T}^\ast)}{1+b}\nabla b-\frac{\pi_1(\rho^\ast(1+b))}{\rho^\ast(1+b)}\nabla\mathfrak{T}+\frac{\pi_1^\prime(\rho^\ast(1+b))}{1+b}\mathfrak{T}\nabla b=0,\\[3ex]
 \displaystyle\partial_t\mathfrak{T}+\bu\cdot\nabla\mathfrak{T}+(\mathcal{T}^\ast+\mathfrak{T})\frac{\pi_1(\rho^\ast(1+b))}{\rho^\ast C_v(1+b)}{\rm div}\bu-\frac{\kappa}{\rho^\ast C_v(1+b)}\Delta\mathfrak{T}=\frac{\frac{\mu}{2}
 |\nabla\bu+(\nabla\bu)^T|^2+\lambda({\rm div}\bu)^2}{\rho^\ast C_v(1+b)}.
 \end{array}
 \right.
 \end{equation}}
Then, setting $\nu\equ\lambda+2\mu, \bar{\nu}=\nu/\rho^\ast, \chi_0\equ\partial_{\rho}P(\rho^\ast, \mathcal{T}^\ast)^{-\frac{1}{2}}$ and executing the transformation
{\small $$
a(t,x)=b(\bar{\nu}\chi_0^2t,\bar{\nu}\chi_0 x),\,\,\,\,{\bf v}(t,x)=\chi_0\bu(\bar{\nu}\chi_0^2t,\bar{\nu}\chi_0 x),\,\,\,
\theta(t,x)=\chi_0\left(\frac{C_v}{\mathcal{T}^\ast}\right)^{\frac{1}{2}}\mathfrak{T}(\bar{\nu}\chi_0^2t,\bar{\nu}\chi_0 x),
$$}
one has
\begin{equation}\label{2.2}
\left\{\begin{array}{ll}
\partial_ta+{\rm div}\bv=f,\\[1ex]
\partial_t\bv-\widetilde{\mathcal{A}}\bv+\nabla a+\gamma\nabla\theta={\bf g},\\[1ex]
\partial_t\theta-\beta\Delta \theta+\gamma{\rm div}\bv={m},\\[1ex]
(a, {\bf v}, \theta)|_{t=0}=(a_0, {\bf v}_0, \theta_0),
\end{array}
\right.
\end{equation}
with
$$
\widetilde{\mathcal{A}}\equ \frac{\mathcal{A}}{\nu}, \,\,\,\,\,\beta\equ\frac{\kappa}{\nu C_v},\,\,\,\,\,\gamma\equ \frac{\chi_0}{\rho^\ast}\left(\frac{\mathcal{T}^\ast}{C_v}\right)^{\frac{1}{2}}\pi_1(\rho^\ast),
$$
where the nonlinear terms $f, {\bf g}$ and $m$ are defined by
\begin{equation}\nonumber
\begin{aligned}
\left.
\begin{array}{ll}\displaystyle
&f\equ -{\rm div}(a\bv),\\[1ex]\displaystyle
&{\bf g}\equ -\bv\cdot\nabla\bv-I(a)\widetilde{\mathcal{A}}\bv-K_1(a)\nabla a-K_2(a)\nabla\theta-\theta\nabla K_3(a),\\[1ex]
&\displaystyle{m}\equ -\bv\cdot\nabla\theta-\beta I(a)\Delta\theta+\frac{Q(\nabla\bv,\nabla\bv)}{1+a}-(K_2(a)+{K}_4(a)\theta){\rm div}\bv
\end{array}
\right.
\end{aligned}
\end{equation}
with
{\small\begin{equation}\nonumber
\begin{array}{ll}\displaystyle
I(a)\equ \frac{a}{1+a},\,\,\,\,\,\,K_1(a)\equ \frac{\partial_{\rho}P(\rho^\ast(1+a),\mathcal{T}^\ast)}{(1+a)\partial_{\rho}P(\rho^\ast,\mathcal{T}^\ast)}-1,\,\,\,\,\,K_4(a)\equ\frac{\pi_1(\rho^\ast(1+a))}{C_v\rho^\ast(1+a)}
\\[2ex]\displaystyle
K_2(a)\equ \frac{\chi_0}{\rho^\ast}\left(\frac{\mathcal{T}^\ast}{C_v}\right)^{\frac{1}{2}}\left(\frac{\pi_1(\rho^\ast(1+a))}{1+a}
-\pi_1(\rho^\ast)\right),\,\,\,\,K_3(a)\equ\chi_0\left(\frac{\mathcal{T}^\ast}{C_v}\right)^{\frac{1}{2}}
\int_0^a\frac{\pi_1^\prime(\rho^\ast(1+z))}{1+z}dz
\\[3ex]\displaystyle
Q(\nabla\bv,\nabla\bv)\equ\frac{1}{\nu\chi_0}\sqrt{\frac{1}{\mathcal{T}^\ast C_v}}\left(\frac{\mu}{2}
 |\nabla\bv+(\nabla\bv)^T|^2+\lambda({\rm div}\bv)^2\right).
\end{array}
\end{equation}}
Note that the exact value of $K_1, K_2, K_3$ and $K_4$ is not what one cares about and we mainly use that they are smooth and that $K_1(0)=K_2(0)=K_3(0)=0$.

We now state the main results of this paper as follows.
\begin{theo}\label{th2}
Let $N\geq 3$ and $p$ satisfy assumption \eqref{1.5}. Let $(\rho,\bu, \mathcal{T})$ be the global solution addressed by Theorem \ref{th1}. If in addition
$(a_0, \bv_0, \theta_0)^\ell\in \dot{B}_{2,\infty}^{-\sigma_1}\,(2-\frac{N}{2}<\sigma_1\leq \sigma_0
\equ \frac{2N}{p}-\frac{N}{2})$ such that $\|(a_0,\bv_0,\theta_0)\|_{\dot{B}_{2,\infty}^{-\sigma_1}}^\ell$
is bounded, then we have
\begin{equation}\label{1}
\|(a,\bv)(t)\|_{\dot{B}_{p,1}^{\sigma_2}}\lesssim (1+t)^{-\frac{N}{2}(\frac{1}{2}-\frac{1}{p})-\frac{\sigma_2+\sigma_1}{2}}
\end{equation}
 and
\begin{equation}\label{2}
\|\theta(t)\|_{\dot{B}_{p,1}^{\sigma_3}}\lesssim (1+t)^{-\frac{N}{2}(\frac{1}{2}-\frac{1}{p})-\frac{\sigma_3+\sigma_1}{2}},
\end{equation}
where $-\sigma_1
-\frac{N}{2}+\frac{N}{p}<\sigma_2\leq \frac{N}{p}-1$,\,\,{\rm and}\,\,$-\sigma_1
-\frac{N}{2}+\frac{N}{p}<\sigma_3\leq\frac{N}{p}-2$ for all $t\geq 0$.
\end{theo}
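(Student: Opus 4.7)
The plan is to follow the pure energy approach of Xin--Xu \cite{xin2018optimal} and its non-isentropic adaptation by Zhai--Chen \cite{zhai2019longtime} in dimension $N=3$, and extend it to the general $L^p$ critical framework for $N\ge 3$ and to the weaker low-frequency space $\dot{B}_{2,\infty}^{-\sigma_1}$. No spectral analysis of the linearized semigroup is needed; instead, I will design a Lyapunov-type time-weighted energy functional whose dissipation dominates the energy itself after invoking a uniform-in-time bound on $\|(a,\bv,\theta)\|_{\dot{B}_{2,\infty}^{-\sigma_1}}^\ell$. The whole argument is a bootstrap on two objects: the global norm $\mathcal{X}_p(t)$ from Theorem~\ref{th1}, which is small and controls high frequencies, and the new low-frequency quantity $\|(a,\bv,\theta)(t)\|_{\dot{B}_{2,\infty}^{-\sigma_1}}^\ell$, which is only bounded.

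First, I would propagate the negative low-frequency norm. Localizing the reformulated system \eqref{2.2} with a Littlewood--Paley block $\dot{\Delta}_j$ for $j\le k_0$, symmetrizing the coupling between $a$, $\bv$ and $\theta$ induced by the pressure and conduction terms, and performing an $L^2$ energy estimate on the resulting effective unknown produce a block-wise inequality of the schematic form
\begin{equation*}
\tfrac{1}{2}\tfrac{d}{dt}\|\dot{\Delta}_j (a,\bv,\theta)\|_{L^2}^2 + c\,2^{2j}\|\dot{\Delta}_j(\bv,\theta)\|_{L^2}^2 \lesssim \|\dot{\Delta}_j (f,\mathbf{g},m)\|_{L^2}\,\|\dot{\Delta}_j(a,\bv,\theta)\|_{L^2}.
\end{equation*}
Multiplying by $2^{-j\sigma_1}$, taking $\sup_{j\le k_0}$ and integrating in time, the nonlinear right-hand side is estimated by Besov product and composition laws, pairing every low-frequency factor with a small $\mathcal{X}_p(t)$-controlled factor. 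This should yield
\begin{equation*}
\|(a,\bv,\theta)(t)\|_{\dot{B}_{2,\infty}^{-\sigma_1}}^\ell \lesssim \|(a_0,\bv_0,\theta_0)\|_{\dot{B}_{2,\infty}^{-\sigma_1}}^\ell + \mathcal{X}_{p,0}\sup_{\tau\le t}\|(a,\bv,\theta)(\tau)\|_{\dot{B}_{2,\infty}^{-\sigma_1}}^\ell,
\end{equation*}
which is uniform in $t$ under the smallness of $\mathcal{X}_{p,0}$, but without any smallness on the initial low-frequency norm itself.

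Second, I would set up a time-weighted Lyapunov functional $\mathcal{E}(t)$ built from \eqref{1.6} (with the high-frequency part upgraded by one derivative so that the dissipation $\mathcal{D}(t)$ controls $\mathcal{E}(t)$ at high frequencies and differs from it only at low frequencies). Combining the energy--dissipation inequality $\mathcal{E}'(t)+c\,\mathcal{D}(t)\lesssim 0$ with the interpolation
\begin{equation*}
\|z\|_{\dot{B}_{2,1}^{\sigma}}^\ell \lesssim \bigl(\|z\|_{\dot{B}_{2,\infty}^{-\sigma_1}}^\ell\bigr)^{1-\vartheta}\bigl(\|z\|_{\dot{B}_{2,1}^{\sigma+1}}^\ell\bigr)^{\vartheta},\qquad \vartheta=\tfrac{\sigma_1+\sigma}{\sigma_1+\sigma+1},
\end{equation*}
and with the uniform bound from the previous step, promotes the energy inequality to $\mathcal{E}'(t)+c\,\mathcal{E}(t)^{1+\eta}\le 0$ for an explicit $\eta>0$, which integrates to the desired polynomial decay. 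The claimed rates \eqref{1}--\eqref{2} at regularity levels $\sigma_2,\sigma_3$ in the $L^p$ scale then follow by combining this low-frequency decay with the embedding $\dot{B}_{2,1}^{s+N/2-N/p}\hookrightarrow \dot{B}_{p,1}^{s}$ on the overlap band $j\le k_0$, and with the exponential parabolic damping of the high-frequency blocks of $\bv$ and $\theta$ together with the effective one-derivative damping of $a$ already exploited in \cite{danchin2016incompressible}.

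The main obstacle is step one in the $\dot{B}_{2,\infty}$ scale \emph{without} smallness: the product estimates for the nonlinear terms $f=-{\rm div}(a\bv)$, $\mathbf{g}$ and $m$ must be sharp enough that every factor carrying the large norm $\|(a,\bv,\theta)\|_{\dot{B}_{2,\infty}^{-\sigma_1}}^\ell$ is paired with a small factor drawn from $\mathcal{X}_p(t)$. This forces a careful paraproduct/remainder split, and the restriction $2-N/2<\sigma_1\le 2N/p-N/2$ emerges precisely as the range in which these product laws close in $\dot{B}_{2,\infty}$ at low frequencies. The quadratic source $Q(\nabla\bv,\nabla\bv)/(1+a)$ in the temperature equation is the most delicate point, since no derivative can be borrowed from $(1+a)^{-1}$; the composition estimates for $I(a)$ and $K_i(a)$ must be performed in the $\dot{B}_{p,1}^{N/p}$ scale while the low-frequency estimate is being closed in $\dot{B}_{2,\infty}^{-\sigma_1}$, and this is the only place where the $\dot{B}_{2,\infty}$ version really differs from the $\dot{B}_{2,1}$ version treated in \cite{zhai2019longtime}.
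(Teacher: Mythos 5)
Your overall strategy is the same as the paper's (propagate the $\dot{B}_{2,\infty}^{-\sigma_1}$ low-frequency norm by an energy/Gronwall argument whose coefficients are time-integrable thanks to $\mathcal{X}_p\lesssim\mathcal{X}_{p,0}$, then derive a Lyapunov inequality and convert it into decay by interpolating against the bounded negative norm), but two quantitative steps are wrong as written. First, your interpolation $\|z\|_{\dot{B}_{2,1}^{\sigma}}^\ell \lesssim (\|z\|_{\dot{B}_{2,\infty}^{-\sigma_1}}^\ell)^{1-\vartheta}(\|z\|_{\dot{B}_{2,1}^{\sigma+1}}^\ell)^{\vartheta}$ with $\vartheta=\frac{\sigma_1+\sigma}{\sigma_1+\sigma+1}$ pairs the energy with a dissipation only one derivative above it, whereas the genuine low-frequency dissipation produced by the hypocoercive $L^2$ estimate is two derivatives above (energy at $\dot{B}_{2,1}^{N/2-1}$, dissipation at $\dot{B}_{2,1}^{N/2+1}$). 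At low frequencies higher-index norms are dominated by lower-index ones, so $\|z\|_{\dot{B}_{2,1}^{\sigma+2}}^\ell$ does \emph{not} control $\|z\|_{\dot{B}_{2,1}^{\sigma+1}}^\ell$, and the step fails; moreover, if it did close it would give $\mathcal{E}'+c\,\mathcal{E}^{1+\frac{1}{\sigma+\sigma_1}}\le 0$ and hence the rate $(1+t)^{-(\sigma+\sigma_1)}$, twice the optimal (and false) heat rate — a sign the bookkeeping is off. The correct version interpolates between $-\sigma_1$ and $\sigma+2$, i.e. $\vartheta=\frac{\sigma+\sigma_1}{\sigma+\sigma_1+2}$, yielding $\mathcal{E}'+c\,\mathcal{E}^{1+\frac{2}{N/2-1+\sigma_1}}\le 0$ and the rate $(1+t)^{-\frac{N/2-1+\sigma_1}{2}}$, which is what the paper uses.

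Second, the transfer to the full range of $\sigma_2,\sigma_3$ does not follow from "embedding plus exponential parabolic damping of the high-frequency blocks." For $\sigma_2<\frac{N}{p}-1$ the needed low-frequency quantity $\|(a,\bv,\theta)\|_{\dot{B}_{2,1}^{\sigma_2+N/2-N/p}}^\ell$ is \emph{not} controlled by the decaying norm $\|(a,\bv,\theta)\|_{\dot{B}_{2,1}^{N/2-1}}^\ell$ (again because of the direction of low-frequency comparisons), so you must interpolate a second time between the uniformly bounded $\dot{B}_{2,\infty}^{-\sigma_1}$ norm and the decaying $\dot{B}_{2,\infty}^{N/2-1}$ norm; this is exactly what produces the stated rate $(1+t)^{-\frac{N}{2}(\frac12-\frac1p)-\frac{\sigma_2+\sigma_1}{2}}$, which is strictly weaker than the functional's rate. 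Also, no exponential decay of the high-frequency blocks is available for the nonlinear problem without a separate Duhamel argument; it suffices (and is what the paper does) to observe that the high-frequency norms are part of the Lyapunov functional, hence decay at its polynomial rate, which dominates the required rate since $\sigma_2\le \frac{N}{p}-1$ and $\sigma_3\le\frac{N}{p}-2$. Finally, in your first step the schematic bound should also carry a harmless additive $O(\mathcal{X}_{p,0})$ term coming from the purely high-frequency quadratic contributions (both factors high-frequency, estimated by the non-classical product laws), which cannot be paired with the low-frequency norm; this does not affect the conclusion.
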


Denote $\Lambda^sf\equ \mathcal{F}^{-1}(|\xi|^s\mathcal{F}f)$ for $s\in\mathbb{R}$. By applying improved Gagliardo-Nirenberg inequalities, the  optimal  decay estimates of $\dot{B}_{2,\infty}^{-\sigma_1}$-$L^r$ type could be deduced as follows.
\begin{col}\label{col1}
Let those assumptions of Theorem \ref{th2} be fulfilled. Then the corresponding solution $(a,\bv,\theta)$ admits
\begin{equation}\nonumber
\|\Lambda^l(a,\bv)\|_{L^r}\lesssim (1+t)^{-\frac{N}{2}(\frac{1}{2}-\frac{1}{r})-\frac{l+\sigma_1}{2}}\,\,\,{\rm and}\,\,\,
\|\Lambda^n\theta\|_{L^r}\lesssim (1+t)^{-\frac{N}{2}(\frac{1}{2}-\frac{1}{r})-\frac{n+\sigma_1}{2}}
\end{equation}
where $l, n$ and  $r$ satisfy $-\sigma_1
-\frac{N}{2}+\frac{N}{p}<l+\frac{N}{p}-\frac{N}{r}\leq \frac{N}{p}-1$ and $-\sigma_1
-\frac{N}{2}+\frac{N}{p}<n+\frac{N}{p}-\frac{N}{r}\leq \frac{N}{p}-2$ for $p\leq r\leq \infty$ and $t\geq 0$.
\end{col}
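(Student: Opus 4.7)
The corollary should be essentially a direct consequence of Theorem~\ref{th2} combined with a Besov-to-Lebesgue embedding, so the proof ought to be short. My plan is as follows.

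First I would translate the $L^r$ estimate into a Besov estimate by means of the continuous inclusion
$$
\dot B_{p,1}^{\,\sigma+\frac{N}{p}-\frac{N}{r}}\hookrightarrow \dot B_{r,1}^{\,\sigma},\qquad p\le r\le\infty,
$$
which is standard: apply the Bernstein inequality $\|\dot\Delta_j g\|_{L^r}\lesssim 2^{jN(1/p-1/r)}\|\dot\Delta_j g\|_{L^p}$ on each dyadic block and sum in $j$. Combined with the elementary embedding $\dot B_{r,1}^{0}\hookrightarrow L^{r}$ (which covers the endpoint $r=\infty$ thanks to the summability index $1$), this yields
$$
\|\Lambda^{l}f\|_{L^{r}}\lesssim \|f\|_{\dot B_{p,1}^{\,l+N/p-N/r}},\qquad
\|\Lambda^{n}f\|_{L^{r}}\lesssim \|f\|_{\dot B_{p,1}^{\,n+N/p-N/r}}.
$$

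Next I would set $\sigma_{2}:=l+N/p-N/r$ and $\sigma_{3}:=n+N/p-N/r$ and observe that the range conditions on $(l,n,r)$ stated in Corollary~\ref{col1} translate \emph{verbatim} into the admissible windows $-\sigma_{1}-\tfrac{N}{2}+\tfrac{N}{p}<\sigma_{2}\le \tfrac{N}{p}-1$ and $-\sigma_{1}-\tfrac{N}{2}+\tfrac{N}{p}<\sigma_{3}\le \tfrac{N}{p}-2$ allowed in Theorem~\ref{th2}. Inserting the decay bounds \eqref{1} and \eqref{2} into the embeddings above then delivers
$$
\|\Lambda^{l}(a,\bv)(t)\|_{L^{r}}\lesssim (1+t)^{-\frac{N}{2}(\frac{1}{2}-\frac{1}{p})-\frac{l+N/p-N/r+\sigma_{1}}{2}},
$$
and the analogous bound for $\Lambda^{n}\theta(t)$.

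To finish, one only needs the elementary scaling identity
$$
\tfrac{N}{2}\!\left(\tfrac{1}{2}-\tfrac{1}{p}\right)+\tfrac{l+N/p-N/r+\sigma_{1}}{2}=\tfrac{N}{2}\!\left(\tfrac{1}{2}-\tfrac{1}{r}\right)+\tfrac{l+\sigma_{1}}{2},
$$
obtained by cancelling the $N/(2p)$ contributions, and the same identity with $n$ in place of $l$; these convert the right-hand sides into the exact rates claimed in Corollary~\ref{col1}. I do not anticipate a genuine obstacle: the entire analytic content (the time-weighted energy scheme, the low/high frequency splitting and the bootstrap) has already been carried out in Theorem~\ref{th2}, and the only structural point worth flagging is that the third index $1$ in $\dot B_{p,1}$ is crucial because it permits the endpoint $r=\infty$ via $\dot B_{p,1}^{N/p}\hookrightarrow L^{\infty}$, which would fail for $\dot B_{p,\infty}^{N/p}$.
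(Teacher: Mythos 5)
Your argument is correct, but it is not the route the paper takes. You deduce the corollary from a single application of Theorem \ref{th2}: the chain $\dot B_{p,1}^{\,l+N/p-N/r}\hookrightarrow\dot B_{r,1}^{\,l}$ together with $\dot B_{r,1}^{0}\hookrightarrow L^{r}$ (Bernstein on each dyadic block, summed thanks to the third index $1$) gives $\|\Lambda^{l}f\|_{L^{r}}\lesssim\|f\|_{\dot B_{p,1}^{\sigma_2}}$ with $\sigma_{2}=l+\frac{N}{p}-\frac{N}{r}$, which lies exactly in the admissible window of the theorem, and your exponent identity converts the rate; the same works for $\theta$ with $\sigma_3=n+\frac{N}{p}-\frac{N}{r}$. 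The paper instead invokes the Lebesgue-space interpolation inequality of Proposition \ref{pra.8}: it bounds $\|\Lambda^{l}(a,\bv)\|_{L^{r}}$ by $\|\Lambda^{m}(a,\bv)\|_{L^{p}}^{1-\eta_2}\|\Lambda^{k}(a,\bv)\|_{L^{p}}^{\eta_2}$ with the two extreme indices $m=\frac{N}{p}-1$ and $k=-\sigma_1-N(\frac12-\frac1p)+\varepsilon$ (the small $\varepsilon$ keeps $k$ strictly above the open lower endpoint), controls each factor through $\dot B_{p,1}^{0}\hookrightarrow L^{p}$ and the decay estimates \eqref{1}--\eqref{2}, and uses that the decay rate is affine in the regularity index so the interpolated rate is again the claimed one. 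Your route is the more economical: it uses the decay at a single intermediate regularity, which Theorem \ref{th2} indeed provides for the whole continuum of $\sigma_2,\sigma_3$, and avoids the $\varepsilon$-adjustment; the paper's Gagliardo--Nirenberg argument only needs the decay at the two endpoint regularities and follows the scheme of Xin--Xu, which is why it is phrased that way. Your closing remark that the summability index $1$ is what allows the endpoint $r=\infty$ (via $\dot B_{\infty,1}^{0}\hookrightarrow L^{\infty}$) is accurate and consistent with the restriction $p\le r\le\infty$ in the statement.
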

\begin{re}
{\rm  In \cite{shi2019thelarge},  the low-frequency assumption of initial data is that there exists a small positive constant $c$ such that
$
\|(a_0,\bv_0,\theta_0)\|_{\dot{B}_{2,\infty}^{-\sigma_1}}^\ell\leq c\,\,\,{\rm with}\,\,\,1-\frac{N}{2}<\sigma_1\leq \sigma_0.
$
Here, the smallness at the low frequencies is removed in Theorem \ref{th2}. On the other hand, the decay rates in Corollary \ref{col1} are coincide with those obtained in \cite{shi2019thelarge}. As pointed out in  \cite{shi2019thelarge},  the decay rates in Corollary \ref{col1} are optimal and satisfactory. However, since the condition $-\sigma_1
-\frac{N}{2}+\frac{N}{p}<\sigma_3\leq\frac{N}{p}-2$ in Theorem \ref{th2} leads to that the values of $\sigma_1$ need to be more than $2-\frac{N}{2}$, and then the admissible value  $\sigma_1$ should belong to $(2-\frac{N}{2}, \sigma_0]$.}
 \end{re}

  It is noted that Xin and Xu \cite{xin2018optimal} developed a pure energy argument to establish the optimal decay for the barotropic compressible Navier-Stokes equations in the $L^p$ critical framework. As pointed out in \cite{xin2018optimal}, the nonlinear estimates at the low frequencies (that is $\|(f,{\bf g}, m)\|_{\dot{B}_{2,\infty}^{-\sigma_1}}^\ell$) play a fundamental role in the process of  proving Theorem \ref{th2}. Here,  based on the work \cite{xin2018optimal}, we develop two non-classical product estimates in the low frequencies (see \eqref{4.2} and \eqref{4.2000} below), which may allow us to handle the nonlinear terms in system \eqref{1.1}. Especially for terms including the temperature $\theta$, we need to use the product estimate \eqref{4.2000} (see for example the estimates \eqref{4.57} and \eqref{4.70} below). Moreover, we will make full use of the structure of system \eqref{1.1} itself.  For example, when dealing with  the trinomial term $Q(\nabla\bv,\nabla\bv)/(1+a)$, we  are going to take full advantage of its symmetrical structure (see \eqref{4.58}-\eqref{4.61} below).

 The rest of this paper is structured as follows. In  section \ref{s:3}, we recall some basic properties of  the homogeneous Besov spaces and give some classical and non-classical product estimates in Besov spaces.  In section \ref{s:4}, we give the low-frequency and high-frequency estimates to system \eqref{2.2}. Section \ref{s:5} is devoted to the estimation of $L^2$-type Besov norms at low frequencies, which plays a key role in deriving the Lyapunov-type inequality for energy norms. Section \ref{s:6}, i.e., the last section presents the proofs of Theorem \ref{th2} and Corollary \ref{col1}.
\section{Preliminaries}\label{s:3}
\setcounter{equation}{0}\setcounter{section}{3}\indent
Throughout the paper, $C$ stands for a harmless ``constant", and we sometimes write $A\lesssim B$ as an equivalent to $A\leq CB$. The notation $A\approx B$ means that $A\lesssim B$ and $B\lesssim A$. For any Banach space $X$ and $u, v\in X$, we agree that $\|(u,v)\|_X\equ \|u\|_X+\|v\|_X$. For  $p\in [1,+\infty]$ and $T>0$, the notation $L^p(0,T;X)$ or $L^p_T(X)$
denotes the set of measurable functions $f:[0,T]\rightarrow X$ with $t\mapsto
\|f(t)\|_X$ in $L^p(0,T)$, endowed with the norm
$
\|f\|_{L^p_T(X)}\equ\bigl{\|}\|f\|_X\bigr{\|}_{L^p(0,T)}.
$
We denote by $\mathcal{C}([0,T];X)$  the set of continuous functions from
$[0,T]$ to $X$.

We first recall the definition of homogeneous Besov spaces, which could be defined by using a dyadic partition of unity in Fourier variables called homogeneous Littlewood-Paley decomposition. Next, the product estimates in homogeneous Besov spaces are presented.
\subsection{Homogeneous Besov spaces}
 At this point, choose a radial function $\varphi\in \mathcal{S}(\mathbb{R}^N)$ supported in $\mathcal{C}=\{\xi\in\mathbb{R}^N, \frac{3}{4}\leq |\xi|\leq \frac{8}{3}\}$ such that
$
\sum_{j\in\mathbb{Z}}\varphi(2^{-j}\xi)=1\quad\!\!{\rm if}\quad\!\!\xi\neq 0.
$
The homogeneous frequency localization operator $\dot{\Delta}_j$ and $\dot{S}_j$ are defined by
$$
\dot{\Delta}_j u=\varphi (2^{-j}D)u, \quad\,\dot{S}_j u=\sum_{k\leq j-1}\dot{\Delta}_k u\quad\,{\rm for}\quad\,j\in\mathbb{Z}.
$$

Let us denote the space $\mathcal{Y}^\prime(\mathbb{R}^N)$ by the quotient space of $\mathcal{S}^\prime(\mathbb{R}^N)/\mathcal{P}$ with the polynomials space $\mathcal{P}$. The formal equality
$
u=\sum_{k\in\mathbb{Z}}\dot{\Delta}_k u
$
holds true for $u\in \mathcal{Y}^\prime(\mathbb{R}^N)$ and is called the homogeneous Littlewood-Paley decomposition.

We then define, for $s\in\mathbb{R}$, $1\leq p, r\leq +\infty$, the homogeneous Besov space
$$
\dot{B}_{p,r}^s={\Big\{}f\in\mathcal{Y}^\prime(\mathbb{R}^N): \|f\|_{\dot{B}_{p,r}^s}<+\infty{\Big\}},
$$
where
$$
\|f\|_{\dot{B}_{p,r}^s}\equ\|2^{ks}\|\dot{\Delta}_k f\|_{L^p}\|_{\ell^r}.
$$

We next introduce the so-called Chemin-Lerner space $\widetilde{L}_T^\rho(\dot{B}_{p,r}^s)$ (see\,\cite{chemin1995flot}):
$$
\widetilde{L}_T^\rho(\dot{B}_{p,r}^s)={\Big\{}f\in (0,+\infty)\times\mathcal{Y}^\prime(\mathbb{R}^N):
\|f\|_{\widetilde{L}_T^\rho(\dot{B}_{p,r}^s)}<+\infty{\Big\}},
$$
where
$
\|f\|_{\widetilde{L}_T^\rho(\dot{B}_{p,r}^s)}\equ\bigl{\|}2^{ks}\|\dot{\Delta}_k f(t)\|_{L^\rho(0,T;L^p)}\bigr{\|}_{\ell^r}.
$
The index $T$ will be omitted if $T=+\infty$ and we shall denote by $\widetilde{\mathcal{C}}_b([0,T]; \dot{B}^s_{p,r})$ the subset of functions of $\widetilde{L}^\infty_T(\dot{B}^s_{p,r})$ which are also continuous from
$[0,T]$ to $\dot{B}^s_{p,r}$. A direct application of Minkowski's inequality implies that
$$
L_T^\rho(\dot{B}_{p,r}^s)\hookrightarrow \widetilde{L}_T^\rho(\dot{B}_{p,r}^s)\,\,\,{\rm if}\,\,\,r\geq \rho,
\quad\,{\rm and}\quad\,
\widetilde{L}_T^\rho(\dot{B}_{p,r}^s)\hookrightarrow {L}_T^\rho(\dot{B}_{p,r}^s)\,\,\,{\rm if}\,\,\,\rho\geq r.
$$
We will repeatedly use the following Bernstein's inequality throughout the paper:
\begin{lem}\label{le2.1}
{\rm(}see {\rm \cite{chemin1998perfect}}{\rm )} Let $\mathcal{C}$ be an annulus and $\mathcal{B}$ a ball, $1\leq p\leq q\leq +\infty$. Assume that $f\in L^p(\mathbb{R}^N)$, then for any nonnegative integer $k$, there exists constant $C$ independent of $f$, $k$ such that
$$
{\rm supp} \hat{f}\subset\lambda \mathcal{B}\Rightarrow\|D^k f\|_{L^q(\mathbb{R}^N)}:=\sup_{|\alpha|=k}\|\partial^\alpha f\|_{L^q(\mathbb{R}^N)}\leq C^{k+1}\lambda^{k+N(\frac{1}{p}-\frac{1}{q})}\|f\|_{L^p(\mathbb{R}^N)},
$$
\be\nonumber
{\rm supp} \hat{f}\subset\lambda\mathcal{C}\Rightarrow C^{-k-1}\lambda^k\|f\|_{L^p(\mathbb{R}^N)}\leq \|D^k f\|_{L^p(\mathbb{R}^N)}\leq
C^{k+1}\lambda^k\|f\|_{L^p(\mathbb{R}^N)}.
\ee
\end{lem}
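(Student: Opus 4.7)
The plan is to realize both inequalities as consequences of Young's convolution inequality applied to a representation of $f$ (or of its derivatives) as the convolution of the data with a dilated fixed Schwartz kernel. The spectral support hypothesis allows us to multiply $\hat f$ freely by any smooth bump equal to $1$ on $\mathcal{B}$ or $\mathcal{C}$; the whole proof then reduces to producing the right fixed kernel and controlling a fixed number of its Sobolev norms.

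For the upper inequalities, fix once and for all $\phi_0,\phi_1\in\mathcal{S}(\mathbb{R}^N)$ with $\widehat{\phi_0}\equiv 1$ on $\mathcal{B}$ and compactly supported in a slightly larger ball, and $\widehat{\phi_1}\equiv 1$ on $\mathcal{C}$ and compactly supported in a slightly larger annulus away from the origin. If ${\rm supp}\,\hat f\subset\lambda\mathcal{B}$, the identity $\hat f=\widehat{\phi_0}(\cdot/\lambda)\hat f$ gives
$$
\partial^\alpha f=\lambda^{N+|\alpha|}\bigl((\partial^\alpha\phi_0)(\lambda\cdot)\bigr)*f,
$$
and Young's inequality with $1+\tfrac{1}{q}=\tfrac{1}{r}+\tfrac{1}{p}$ together with the scaling $\|(\partial^\alpha\phi_0)(\lambda\cdot)\|_{L^r}=\lambda^{-N/r}\|\partial^\alpha\phi_0\|_{L^r}$ yields
$$
\|\partial^\alpha f\|_{L^q}\leq\lambda^{|\alpha|+N(\tfrac{1}{p}-\tfrac{1}{q})}\|\partial^\alpha\phi_0\|_{L^r}\|f\|_{L^p}.
$$
Taking the supremum over $|\alpha|=k$ gives the first inequality; the upper half of the annulus estimate is identical with $\phi_0$ replaced by $\phi_1$ and $q=p$.

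The heart of the proof is the lower annulus bound, which amounts to inverting $k$ derivatives on $\mathcal{C}$. Because $\widehat{\phi_1}$ is supported away from the origin, the symbols
$$
\widehat{g_\alpha}(\xi):=\frac{\widehat{\phi_1}(\xi)(i\xi)^\alpha}{|\xi|^{2k}},\qquad|\alpha|=k,
$$
are smooth and compactly supported, so $g_\alpha\in\mathcal{S}$. The multinomial identity $|\xi|^{2k}=\sum_{|\alpha|=k}\tfrac{k!}{\alpha!}\xi^{2\alpha}$ combined with $\xi^{2\alpha}=(i\xi)^\alpha(-i\xi)^\alpha$ gives
$$
\widehat{\phi_1}(\xi)=\sum_{|\alpha|=k}\tfrac{k!}{\alpha!}\widehat{g_\alpha}(\xi)(-i\xi)^\alpha.
$$
Evaluating this at $\xi/\lambda$, multiplying by $\hat f$ (which is supported in $\lambda\mathcal{C}$) and inverting the Fourier transform produces a representation $f=\lambda^{-k}\sum_{|\alpha|=k}c_\alpha\bigl(\lambda^N g_\alpha(\lambda\cdot)\bigr)*\partial^\alpha f$ with $c_\alpha=(-1)^{|\alpha|}\tfrac{k!}{\alpha!}$; Young's inequality $L^1*L^p\hookrightarrow L^p$ then bounds $\|f\|_{L^p}$ by $\lambda^{-k}\sum_{|\alpha|=k}|c_\alpha|\|g_\alpha\|_{L^1}\sup_{|\alpha|=k}\|\partial^\alpha f\|_{L^p}$.

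The main obstacle, beyond routine multiplier bookkeeping, is to upgrade the natural $C_k$-type constants to a geometric $C^{k+1}$ uniform in $k$. For the upper inequalities I would use Fourier inversion: $(1+|x|^2)^N\partial^\alpha\phi_0(x)=\mathcal{F}^{-1}\bigl[(1-\Delta_\xi)^N((i\xi)^\alpha\widehat{\phi_0})\bigr](x)$, whose right-hand side has $L^\infty$-norm bounded by $C^{|\alpha|+1}$ because $\widehat{\phi_0}$ is fixed and compactly supported in a ball of radius $R$ and each derivative hitting $(i\xi)^\alpha$ contributes at most a factor $R$; integrating gives $\|\partial^\alpha\phi_0\|_{L^r}\leq C^{|\alpha|+1}$ for every $r$. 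The bound $\|g_\alpha\|_{L^1}\leq C^{k+1}$ is obtained the same way, noting that on the support of $\widehat{\phi_1}$ the factor $(i\xi)^\alpha/|\xi|^{2k}$ has sup-norm at most $C^k$ and that its derivatives lose only constants. Finally, the combinatorial identity $\sum_{|\alpha|=k}\tfrac{k!}{\alpha!}=N^k$ is absorbed into the final constant, which closes the proof.
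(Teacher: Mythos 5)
Your proof is correct, and it is essentially the classical argument (the one found in the cited references, e.g.\ Lemma~2.1 of \cite{bahouri2011fourier}): the paper itself states this lemma as a known result from \cite{chemin1998perfect} and offers no proof, so there is nothing in the text to compare against. Both halves of your argument are sound — the convolution representation $\partial^\alpha f=\lambda^{N+|\alpha|}(\partial^\alpha\phi_0)(\lambda\cdot)*f$ with Young's inequality for the upper bounds, and the division by $|\xi|^{2k}$ on the annulus via the multinomial identity $|\xi|^{2k}=\sum_{|\alpha|=k}\frac{k!}{\alpha!}\xi^{2\alpha}$ for the reverse bound. One small imprecision: in bounding $\|g_\alpha\|_{L^1}$ you say the derivatives of $(i\xi)^\alpha/|\xi|^{2k}$ ``lose only constants''; in fact each of the $2N$ derivatives can produce a factor of order $k$ (from differentiating $\xi^\alpha$ or $|\xi|^{-2k}$), so the true loss is polynomial in $k$ of fixed degree $2N$ — but since $k^{2N}\leq C^{k}$ for a suitable geometric constant, this is harmlessly absorbed and the stated $C^{k+1}$ bound survives. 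The same remark applies to your Fourier-inversion bound for $\|\partial^\alpha\phi_0\|_{L^r}$, where $(1-\Delta_\xi)^N$ acting on $(i\xi)^\alpha\widehat{\phi_0}$ likewise produces factors $\frac{\alpha!}{(\alpha-\beta)!}\leq k^{|\beta|}$ rather than pure constants.
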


More generally, if $v$ satisfies ${\rm supp}\mathcal{F}v\subset\{\xi\in\mathbb{R}^N: R_1\lambda\leq |\xi|\leq R_2\lambda\}$
for some $0< R_1<R_2$ and $\lambda>0$, then for any smooth homogeneous of degree $m$ function $A$ on $\mathbb{R}^N\backslash\{0\}$ and $1\leq q\leq \infty$, it holds that (see e.g. Lemma 2.2 in \cite{bahouri2011fourier}):
\be\label{2.100}
\|A(D)v\|_{L^q}\lesssim \lambda^m\|v\|_{L^q}.
\ee

The following nonlinear generalization of \eqref{2.100} will be applied (see Lemma 8 in \cite{danchin2010well-posedness}):

\begin{prop}\label{pr2.3}
If ${\rm Supp}\mathcal{F}f\subset\{\xi\in\mathbb{R}^N: R_1\lambda\leq |\xi|\leq R_2\lambda\}$ then there exists $c$ depending only on $N, R_1$ and $R_2$ so that for all $1<p<\infty$,
$$
c\lambda^2\left(\frac{p-1}{p^2}\right)\int_{\mathbb{R}^N}|f|^pdx\leq (p-1)\int_{\mathbb{R}^N}|\nabla f|^2|f|^{p-2}dx
=-\int_{\mathbb{R}^N}\Delta f|f|^{p-2}fdx.
$$
\end{prop}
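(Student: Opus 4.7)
The plan is to treat the equality and the inequality separately.

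The equality $(p-1)\int|\nabla f|^2|f|^{p-2}\,dx = -\int\Delta f\cdot|f|^{p-2}f\,dx$ is the easier of the two. The spectral-support hypothesis forces $f\in\mathcal{S}(\mathbb{R}^N)$ (in particular smooth with rapid decay), so integration by parts applies as soon as one justifies the pointwise identity $\nabla(|f|^{p-2}f) = (p-1)|f|^{p-2}\nabla f$ on $\{f\neq 0\}$. For $1<p<2$ this requires a standard regularization: one replaces $|f|^{p-2}$ by $(|f|^2+\varepsilon)^{(p-2)/2}$, performs the integration by parts in the regularized form, and passes to the limit $\varepsilon\downarrow 0$ by dominated convergence, using that analyticity of $f$ makes $|f|^{p-2}$ locally integrable.

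For the inequality I would introduce the inverse Laplacian $h\equ(-\Delta)^{-1}f$. Picking a radial cutoff $\chi\in C_c^\infty(\mathbb{R}^N)$ equal to $1$ on $\{R_1\leq|\xi|\leq R_2\}$ and supported in a slightly larger annulus, the spectral localization of $f$ gives the convolution representation $\nabla h = \lambda^{-1}\tilde K_\lambda*f$, with $\tilde K_\lambda(x)=\lambda^N\tilde K(\lambda x)$ and $\tilde K\in\mathcal{S}(\mathbb{R}^N)$ having $L^1$-norm controlled by a constant depending only on $N,R_1,R_2$. In particular the pointwise bound $|\nabla h(x)|\leq C\lambda^{-1}\mathcal{M}f(x)$ holds, where $\mathcal{M}$ is the Hardy--Littlewood maximal operator. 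Writing $f=-\Delta h$ and using the equality just established,
\begin{equation*}
\int|f|^p\,dx = -\int\Delta h\cdot|f|^{p-2}f\,dx = (p-1)\int\nabla h\cdot|f|^{p-2}\nabla f\,dx,
\end{equation*}
and the Cauchy--Schwarz inequality with weight $|f|^{p-2}$ yields
\begin{equation*}
\Big(\int|f|^p\,dx\Big)^{\!2}\!\leq(p-1)^2\Big(\int|\nabla h|^2|f|^{p-2}\,dx\Big)\Big(\int|\nabla f|^2|f|^{p-2}\,dx\Big).
\end{equation*}

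The crux is then the weighted bound $\int|\nabla h|^2|f|^{p-2}\,dx\leq C\lambda^{-2}\int|f|^p\,dx$ with $C=C(N,R_1,R_2)$ independent of $p$; by $|\nabla h|\lesssim\lambda^{-1}\mathcal{M}f$ this reduces to the weighted maximal inequality $\int(\mathcal{M}f)^2|f|^{p-2}\,dx\lesssim\int|f|^p\,dx$. For $p\geq 2$ this follows at once from H\"older with conjugate exponents $(p/2,\,p/(p-2))$ and the $L^p$-boundedness of $\mathcal{M}$. Feeding the weighted bound back into the Cauchy--Schwarz estimate and cancelling one factor of $\int|f|^p$ produces $\lambda^2\int|f|^p\,dx\leq C(p-1)^2\int|\nabla f|^2|f|^{p-2}\,dx$, and since $(p-1)^2\leq p^2$ for every $p>1$, this immediately implies the claim with $c=1/C$, depending only on $N,R_1,R_2$. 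The main obstacle is the $1<p<2$ branch of the weighted estimate: the weight $|f|^{p-2}$ has negative exponent and is singular on $\{f=0\}$, so the clean H\"older/maximal-function pairing that handles $p\geq 2$ breaks down. Its resolution requires a genuinely spectral ingredient (for example, a Muckenhoupt-type reverse comparison exploiting that spectral support of $f$ in an annulus of scale $\lambda$ makes $|f|$ comparable to its Schwartz-function convolution averages at scale $\lambda^{-1}$) in order to keep the constant independent of $p$.
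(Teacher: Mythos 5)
The paper itself does not prove this proposition: it is quoted verbatim from Lemma 8 of \cite{danchin2010well-posedness}, so the only thing to compare against is the cited proof. Judged on its own terms, your argument is correct and complete for $p\ge 2$ (and, incidentally, that is the only range in which the paper ever invokes Proposition \ref{pr2.3}, namely in Lemma \ref{lem3} with the $p$ of \eqref{1.5}). But the proposition is asserted for all $1<p<\infty$, and the gap you flag for $1<p<2$ is not a missing technicality --- your route is structurally blocked there. The weighted Cauchy--Schwarz step commits you to bounding $\int|\nabla h|^2|f|^{p-2}\,dx$ with $h=(-\Delta)^{-1}f$, and this quantity can be $+\infty$: if $f$ vanishes to order $k$ with $k(2-p)\ge 1$ (already $k=2$ at a point in dimension one when $p\le 3/2$, which band-limited functions certainly admit), then $|f|^{p-2}$ is not locally integrable across the zero set while $\nabla h$ has no reason to vanish there. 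The two sides of the actual inequality remain finite only because $|\nabla f|^2$ vanishes fast enough to compensate the singular weight; any proof must therefore keep $|\nabla f|^2$ glued to $|f|^{p-2}$ rather than separating them. (Two minor points on the part that works: for $p\ge2$ the maximal function is superfluous, since H\"older plus the Bernstein bound $\|\nabla(-\Delta)^{-1}f\|_{L^p}\lesssim\lambda^{-1}\|f\|_{L^p}$ gives $\int|\nabla h|^2|f|^{p-2}\le\|\nabla h\|_{L^p}^2\|f\|_{L^p}^{p-2}$ directly; and in the equality step your claim that analyticity makes $|f|^{p-2}$ locally integrable is false in general for the same reason, although the regularized integration by parts still goes through because it is $|\nabla f|^2|f|^{p-2}$, not $|f|^{p-2}$, that needs to be integrable.)

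The cited proof avoids the singular weight altogether and works for every $p\in(1,\infty)$. Since $\widehat f$ is supported in $\lambda\mathcal{C}$, one has $f=\lambda^Ng(\lambda\cdot)*f$ for a real, even $g\in\mathcal{S}$ with $\int g=0$, whence $f(x)=\int\lambda^Ng(\lambda(x-y))\bigl(f(y)-f(x)\bigr)\,dy$. Multiplying by $|f|^{p-2}f(x)$, integrating and symmetrizing in $(x,y)$ expresses $\int|f|^p$ through the double difference $\bigl(f(y)-f(x)\bigr)\bigl(|f|^{p-2}f(y)-|f|^{p-2}f(x)\bigr)$, which is nonnegative and dominated by $\bigl(F(y)-F(x)\bigr)^2$ with $F:=|f|^{p/2-1}f$, by an elementary convexity inequality for $s\mapsto|s|^{p-2}s$. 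Then $\|F(\cdot+z)-F\|_{L^2}\le|z|\,\|\nabla F\|_{L^2}$ and $\int\lambda^N|g(\lambda z)||z|^2\,dz=C\lambda^{-2}$ give $\lambda^2\int|f|^p\lesssim\|\nabla F\|_{L^2}^2=\tfrac{p^2}{4}\int|\nabla f|^2|f|^{p-2}$, which is exactly the claim with a constant depending only on $N,R_1,R_2$. That combination --- the mean-zero convolution representation, the symmetrization, and the pointwise inequality relating the monotone difference to $(F(y)-F(x))^2$ --- is the ingredient your proposal is missing.
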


Let us now state some classical properties for the Besov spaces.
\begin{prop}\label{pr2.1} The following properties hold true:

\medskip
{\rm 1)} Derivation: There exists a universal constant $C$ such that
$$
C^{-1}\|f\|_{\dot{B}_{p,r}^s}\leq \|\nabla f\|_{\dot{B}_{p,r}^{s-1}}\leq C\|f\|_{\dot{B}_{p,r}^s}.
$$

{\rm 2)} Sobolev embedding: If $1\leq p_1\leq p_2\leq\infty$ and $1\leq r_1\leq r_2\leq\infty$, then $\dot{B}_{p_1, r_1}^s\hookrightarrow \dot{B}_{p_2, r_2}^{s-\frac{N}{p_1}+\frac{N}{p_2}}$.

{\rm 3)} Real interpolation: $\|f\|_{\dot{B}_{p,r}^{\theta s_1+(1-\theta)s_2}}\leq \|f\|_{\dot{B}_{p,r}^{s_1}}^{\theta}\|f\|_{\dot{B}_{p,r}^{s_2}}^{1-\theta}$.

{\rm 4)} Algebraic properties: for $s>0$, $\dot{B}_{p,1}^s\cap L^\infty$ is an algebra.
\end{prop}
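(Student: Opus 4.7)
The plan is to run a pure-energy/Lyapunov argument in the spirit of Xin--Xu, propagating two quantities simultaneously: the critical norm $\mathcal{X}_p(t)$ from Theorem~\ref{th1} (which stays small by hypothesis), and a low-frequency bound $\|(a,\bv,\theta)(t)\|_{\dot B_{2,\infty}^{-\sigma_1}}^{\ell}$, which only needs to remain bounded, not small. The stated decay rates then follow by combining these two controls via Besov interpolation, the embeddings of Proposition~\ref{pr2.1}, and a Lyapunov-type differential inequality.

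First, I would apply $\dot{\Delta}_j$ with $j\le k_0$ to system~\eqref{2.2} and exploit the parabolic--hyperbolic structure (the Lam\'e operator dissipates $\bv$, the heat operator dissipates $\theta$, the $\nabla a$ term damps $a$ after coupling with $\mathrm{div}\,\bv$, and the cross-terms $\gamma\nabla\theta$ and $\gamma\,\mathrm{div}\,\bv$ cancel in the $L^2$ inner product) to derive
\begin{equation*}
\|(a,\bv,\theta)(t)\|_{\dot B_{2,\infty}^{-\sigma_1}}^{\ell} \lesssim \|(a_0,\bv_0,\theta_0)\|_{\dot B_{2,\infty}^{-\sigma_1}}^{\ell} + \int_0^t \|(f,\mathbf g,m)(\tau)\|_{\dot B_{2,\infty}^{-\sigma_1}}^{\ell}\,d\tau.
\end{equation*}
To close this bound \emph{without} smallness on the initial low-frequency norm, the key ingredient is the pair of non-classical low-frequency product rules that the authors announce at (4.2) and (4.2000): instead of a standard Bony paraproduct (which would force a small low-frequency factor), one allocates the rough factor to an $L^2$-type low-frequency space and the smoother factor to an $L^p$-type norm already controlled by $\mathcal{X}_p(t)$. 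These rules handle $\mathrm{div}(a\bv)$, $I(a)\widetilde{\mathcal A}\bv$, and the $K_i(a)\nabla(\cdot)$ terms; the second, sharper rule is essential for the $\theta$-couplings $\theta\nabla K_3(a)$, $K_2(a)\nabla\theta$, $\beta I(a)\Delta\theta$, and $(K_2(a)+K_4(a)\theta)\mathrm{div}\,\bv$. For the quadratic heat source $Q(\nabla\bv,\nabla\bv)/(1+a)$, which would lose two derivatives on $\bv$ if treated naively, I would exploit its symmetric divergence-form structure as signaled at (4.58)--(4.61) to integrate by parts and reduce the burden to a single derivative on $\bv$.

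Next, I combine the bounded low-frequency norm with the global dissipation in $\mathcal{X}_p$ via interpolation: Proposition~\ref{pr2.1}(3) gives
\begin{equation*}
\|u\|_{\dot B_{2,1}^{N/2-1}}^{\ell} \lesssim \bigl(\|u\|_{\dot B_{2,\infty}^{-\sigma_1}}^{\ell}\bigr)^{\vartheta}\bigl(\|u\|_{\dot B_{2,1}^{N/2+1}}^{\ell}\bigr)^{1-\vartheta},\qquad \vartheta=\tfrac{2}{N/2+1+\sigma_1},
\end{equation*}
and an analogous relation at the high-frequency $L^p$-level. Plugging the dissipation factor back into the energy inequality for $\mathcal X_p$ yields $\tfrac{d}{dt}\mathcal E + \mathcal E^{1+\eta} \lesssim \text{cubic}$ for some $\eta=\eta(\sigma_1,N,p)>0$, and a standard ODE comparison produces the base decay $\mathcal E(t)\lesssim (1+t)^{-\frac{N}{2}(\frac12-\frac1p)-\frac{\sigma_1}{2}}$ at the critical regularity. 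A time-weighted bootstrap with weight $(1+t)^{\frac{N}{2}(\frac12-\frac1p)+\frac{\sigma_2+\sigma_1}{2}}$ (resp.\ the weight tuned to $\sigma_3$) then upgrades the decay: using Proposition~\ref{pr2.1}(2) to pass from $L^2$-critical low-frequency decay to $L^p$-type decay at higher regularity, and using the exponential smoothing of the Lam\'e and heat operators to treat the high-frequency blocks, one arrives at the estimates \eqref{1} and \eqref{2} throughout the stated ranges of $\sigma_2$ and $\sigma_3$.

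The main obstacle is the first step: producing the two non-classical low-frequency product estimates in a form robust enough to absorb the temperature couplings and the symmetric quadratic heat source. A Bony-type paraproduct at the low-regularity index $\dot B_{2,\infty}^{-\sigma_1}$ would demand a small low-frequency factor---precisely what the theorem forbids---so one must instead decompose each product so that the rough factor always sits in an $L^2$-type low-frequency space while the companion factor carries its positive regularity in the $L^p$-type norms available from $\mathcal X_p$. The restriction $\sigma_1>2-N/2$ (rather than $1-N/2$ as in the barotropic case) traces back exactly to this step: the heat equation for $\theta$ loses one more derivative than the momentum equation, which forces a slightly stronger low-frequency index in order to close all the nonlinear estimates simultaneously.
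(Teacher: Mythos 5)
Your proposal does not address the statement at hand. The statement is Proposition \ref{pr2.1}, a list of four classical properties of homogeneous Besov spaces (the equivalence $\|\nabla f\|_{\dot B^{s-1}_{p,r}}\approx\|f\|_{\dot B^s_{p,r}}$, the Sobolev-type embedding $\dot B^{s}_{p_1,r_1}\hookrightarrow\dot B^{s-N/p_1+N/p_2}_{p_2,r_2}$, the interpolation inequality, and the algebra property of $\dot B^s_{p,1}\cap L^\infty$ for $s>0$). What you have written is instead a strategy outline for Theorem \ref{th2}: low-frequency energy estimates for system \eqref{2.2}, the non-classical product rules \eqref{4.2} and \eqref{4.2000}, the Lyapunov-type differential inequality, and the time-decay bootstrap. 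None of that has any bearing on the four functional-analytic facts you were asked to establish, so as a proof of Proposition \ref{pr2.1} the proposal is vacuous.

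For the record, the paper states Proposition \ref{pr2.1} without proof, as these are standard facts (see \cite{bahouri2011fourier}). If you wanted to supply an argument: items 1) and 2) follow by applying Bernstein's inequalities (Lemma \ref{le2.1}) to each dyadic block $\dot\Delta_j f$, whose Fourier support lies in an annulus of radius $\sim 2^j$, and then summing in $\ell^r$ (using $\ell^{r_1}\hookrightarrow\ell^{r_2}$ for the embedding); item 3) is H\"older's inequality applied to the sequence $\bigl(2^{js}\|\dot\Delta_j f\|_{L^p}\bigr)_j$ with the splitting $\theta s_1+(1-\theta)s_2$; and item 4) follows from Bony's decomposition \eqref{2.3} together with the continuity properties of the paraproduct and remainder recorded in Proposition \ref{pr2.2}. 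You should redo the exercise with the correct target statement in view.
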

\subsection{Product estimates} We recall a few nonlinear estimates in Besov spaces which may be derived
by using paradifferential calculus.  Introduced by  Bony in \cite{bony1981calcul}, the paraproduct between $f$
and $g$ is defined by
$$
T_f g=\sum_{k\in\mathbb{Z}}\dot{S}_{k-1}f\dot{\Delta}_k g,
$$
and the remainder is given by
$$
R(f,g)=\sum_{k\in\mathbb{Z}}\dot{\Delta}_k f\widetilde{\dot{\Delta}}_k g\,\,\,\,{\rm with}\,\,\,\,\widetilde{\dot{\Delta}}_k g\equ(\dot{\Delta}_{k-1}+\dot{\Delta}_{k}+\dot{\Delta}_{k+1})g.
$$
One has  the following so-called Bony's decomposition:
\be\label{2.3}
fg=T_g f+T_f g+R(f,g).
\ee

The paraproduct $T$ and the remainder $R$ operators satisfy the following continuous properties (see e.g. \cite{bahouri2011fourier}).
 \begin{prop}\label{pr2.2}
Suppose that $s\in\mathbb{R}, \sigma>0,$ and $1\leq p, p_1, p_2, r, r_1, r_2\leq \infty$.  Then we have
 \medskip

{\rm 1)} The paraproduct $T$ is a bilinear, continuous operator from $L^\infty\times\dot{B}_{p,r}^s$ to $\dot{B}_{p,r}^s$, and from $\dot{B}_{\infty, r_1}^{-\sigma}\times\dot{B}_{p,r_2}^s$ to $\dot{B}_{p,r}^{s-\sigma}$ with
$\frac{1}{r}=\min\{1, \frac{1}{r_1}+\frac{1}{r_2}\}$.

\medskip
{\rm 2)} The remainder $R$ is bilinear continuous from $\dot{B}_{p_1,r_1}^{s_1}\times\dot{B}_{p_2,r_2}^{s_2}$ to $\dot{B}_{p,r}^{s_1+s_2}$ with $s_1+s_2>0$, $\frac{1}{p}=\frac{1}{p_1}+\frac{1}{p_2}\leq 1$, and $\frac{1}{r}=\frac{1}{r_1}+\frac{1}{r_2}\leq 1$.

 \end{prop}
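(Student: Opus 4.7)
The plan is to reduce both statements to dyadic calculations, as is standard in the Bony paradifferential calculus. For each of the operators $T$ and $R$, the first step is to identify the Fourier support of the building blocks $\dot S_{k-1}f\,\dot\Delta_k g$ and $\dot\Delta_k f\,\widetilde{\dot\Delta}_k g$. Since $\dot S_{k-1}f$ has spectrum in a ball of radius $\sim 2^{k-2}$ while $\dot\Delta_k g$ has spectrum in an annulus of size $2^k$, the paraproduct block lives in an annulus of comparable size, so only finitely many values of $k$ (with $|k-j|\le N_0$ for some absolute $N_0$) contribute to $\dot\Delta_j T_f g$. By contrast, $\dot\Delta_k f$ and $\widetilde{\dot\Delta}_k g$ both live in annuli of size $2^k$ but their sum can cancel at low frequency, so the remainder block has spectrum only in a ball of radius $\sim 2^k$ and $\dot\Delta_j R(f,g)$ picks up all $k\ge j-N_0$.

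For the paraproduct part, Hölder's inequality yields $\|\dot S_{k-1}f\,\dot\Delta_k g\|_{L^p}\le\|\dot S_{k-1}f\|_{L^\infty}\|\dot\Delta_k g\|_{L^p}$. When $f\in L^\infty$ we bound $\|\dot S_{k-1}f\|_{L^\infty}\lesssim\|f\|_{L^\infty}$ directly (say via the kernel representation of $\dot S_{k-1}$), multiply by $2^{js}$, and take $\ell^r$ in $j$; the finite overlap in $k$ produces the first inclusion. When instead $f\in\dot B_{\infty,r_1}^{-\sigma}$ with $\sigma>0$, I write $\dot S_{k-1}f=\sum_{k'\le k-2}\dot\Delta_{k'}f$ and factor a $2^{k\sigma}$ out of the geometric tail to get $\|\dot S_{k-1}f\|_{L^\infty}\lesssim 2^{k\sigma}d_k$ with $(d_k)\in\ell^{r_1}$ arising as the convolution of the Besov sequence of $f$ with the summable sequence $(2^{-k\sigma}\mathbf 1_{\{k\ge 2\}})$. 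Weighting by $2^{j(s-\sigma)}$ and taking $\ell^r$ in $j$, together with Young's convolution inequality for sequences, then gives the second inclusion under $\tfrac1r=\min\{1,\tfrac1{r_1}+\tfrac1{r_2}\}$.

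For the remainder part, Hölder gives $\|\dot\Delta_k f\,\widetilde{\dot\Delta}_k g\|_{L^p}\le\|\dot\Delta_k f\|_{L^{p_1}}\|\widetilde{\dot\Delta}_k g\|_{L^{p_2}}$ under $\tfrac1p=\tfrac1{p_1}+\tfrac1{p_2}$. Summing over $k\ge j-N_0$ and weighting by $2^{j(s_1+s_2)}$ produces
\[
2^{j(s_1+s_2)}\|\dot\Delta_j R(f,g)\|_{L^p}\lesssim\sum_{k\ge j-N_0}2^{(j-k)(s_1+s_2)}\,\alpha_k\,\beta_k,
\]
with $\alpha_k=2^{ks_1}\|\dot\Delta_k f\|_{L^{p_1}}$ and $\beta_k=2^{ks_2}\|\widetilde{\dot\Delta}_k g\|_{L^{p_2}}$. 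The assumption $s_1+s_2>0$ is exactly what makes the kernel $2^{(j-k)(s_1+s_2)}\mathbf 1_{\{k\ge j-N_0\}}$ an $\ell^1$ sequence in $k$ with bounded $\ell^1$ norm (uniformly in $j$); Hölder for sequences bounds $\|(\alpha_k\beta_k)\|_{\ell^r}\le\|\alpha\|_{\ell^{r_1}}\|\beta\|_{\ell^{r_2}}$ under $\tfrac1r=\tfrac1{r_1}+\tfrac1{r_2}$, and Young's convolution inequality in $j$ completes the proof.

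The one subtle point is bookkeeping of indices: the claimed range of $r$ in each statement is forced by whether one uses Young with an $\ell^1$ kernel (giving the sharp $1/r=1/r_1+1/r_2$) or pairs two $\ell^{r_i}$ sequences in a finite sum (which forces the $\min\{1,\cdot\}$ in the paraproduct estimate). Beyond this indexing, no step is substantive; positivity of $\sigma$ and of $s_1+s_2$ is precisely what guarantees convergence of the geometric tails.
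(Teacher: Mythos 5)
The paper does not prove this proposition; it is quoted directly from the literature (Bahouri--Chemin--Danchin \cite{bahouri2011fourier}), so there is no internal proof to compare against. Your argument is correct and is precisely the standard proof given in that reference: spectral localization of the dyadic blocks to reduce to finitely many (resp.\ a half-line of) indices, H\"older on each block, and Young/H\"older for sequences, with $\sigma>0$ and $s_1+s_2>0$ supplying the summable geometric tails and the $\min\{1,\cdot\}$ convention arising exactly as you describe.
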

 \medbreak
The following  non-classical product estimates enable us to establish the evolution of Besov norms at low frequencies
(see Lemma \ref{le3} below).
\begin{prop}\label{pr4.1}
Let $1-\frac{N}{2}<\sigma_1\leq \frac{2N}{p}-\frac{N}{2} (N\geq 2)$ and $p$ satisfy \eqref{1.5}.
Then the following estimates  hold true:
\be\label{4.1}
\|fg\|_{\dot{B}_{2,\infty}^{-\sigma_1}}\lesssim\|f\|_{\dot{B}_{p,1}^{\frac{N}{p}}}\|g\|_{\dot{B}_{2,\infty}^{-\sigma_1}},
\ee
\be\label{4.2}
\|fg\|_{\dot{B}_{2,\infty}^{-\sigma_1}}^\ell\lesssim\|f\|_{\dot{B}_{p,1}^{\frac{N}{p}-1}}
\left(\|g\|_{\dot{B}_{p,\infty}^{-\sigma_1+\frac{N}{p}-\frac{N}{2}+1}}
+\|g\|_{\dot{B}_{p,\infty}^{-\sigma_1+\frac{2N}{p}-N+1}}\right)
\ee
and
\be\label{4.2000}
\|fg\|_{\dot{B}_{2,\infty}^{-\sigma_1}}^\ell\lesssim\|f\|_{\dot{B}_{p,1}^{\frac{N}{p}-2}}
\left(\|g\|_{\dot{B}_{p,\infty}^{-\sigma_1+\frac{N}{p}-\frac{N}{2}+2}}
+\|g\|_{\dot{B}_{p,\infty}^{-\sigma_1+\frac{2N}{p}-N+1}}\right).
\ee
\end{prop}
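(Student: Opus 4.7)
The plan is to deduce all three estimates from Bony's decomposition (2.3), $fg = T_f g + T_g f + R(f,g)$, by estimating the three pieces separately. The only tools I would need are Propositions 3.1 and 3.2 together with Bernstein's inequality from Lemma 3.1, which lets one trade a power of $2^{j}$ for a shift of integrability indices on frequency-localised functions.

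For (4.1) the regularity $f\in \dot{B}_{p,1}^{N/p}\hookrightarrow L^\infty$ makes $T_f g$ immediate from Proposition 3.2(1) applied to $L^\infty\times \dot{B}_{2,\infty}^{-\sigma_1}$. For $T_g f$ I would estimate dyadic blocks by Hölder $L^{2p/(p-2)}\times L^p\to L^2$ (the limit $p=2$ falls under $L^\infty\times L^2$), and bound $\|\dot{S}_{k-1}g\|_{L^{2p/(p-2)}}$ via Bernstein by $2^{k(N/p+\sigma_1)}\|g\|_{\dot{B}_{2,\infty}^{-\sigma_1}}$; the requirement that $\sum_{l\leq k-2}2^{l(N/p+\sigma_1)}$ converge translates into $\sigma_1>-N/p$, which is implied by the hypotheses $\sigma_1>1-N/2$ and $p\leq 2N/(N-2)$. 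For the remainder $R(f,g)$ I would instead apply Bernstein from $L^{2p/(p+2)}$ to $L^2$ and Hölder $L^p\times L^2\to L^{2p/(p+2)}$; the resulting geometric sum converges because $\sigma_1\leq N/p$, verified under (1.5).

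For (4.2) and (4.2000) I would keep the same Bony splitting but work only with low-frequency blocks, so that Bernstein can freely convert $L^p$-based norms into $L^2$-based ones. The two $g$-norms on the right-hand side correspond to two different Hölder pairings: for $T_f g$ and $R(f,g)$ I would pair $f\in L^N$ (via Sobolev from $\dot{B}_{p,1}^{N/p-1}$) with $g$ in $L^{2N/(N-2)}$, using Bernstein $\|\dot{\Delta}_k g\|_{L^{2N/(N-2)}}\lesssim 2^{k(N/p-N/2+1)}\|\dot{\Delta}_k g\|_{L^p}$ (valid since $p\leq 2N/(N-2)$); this produces precisely the \emph{first} norm $\|g\|_{\dot{B}_{p,\infty}^{-\sigma_1+N/p-N/2+1}}$. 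For $T_g f$ I would instead use Hölder $L^{2p/(p-2)}\times L^p\to L^2$ together with Bernstein $\|\dot{\Delta}_l g\|_{L^{2p/(p-2)}}\lesssim 2^{l(2N/p-N/2)}\|\dot{\Delta}_l g\|_{L^p}$; the low-frequency summation over $l\leq k-2$ converges iff $\sigma_1+N/2-1>0$, which is exactly the hypothesis $\sigma_1>1-N/2$, and produces the \emph{second} norm. The estimate (4.2000) then follows from the same steps with $f\in \dot{B}_{p,1}^{N/p-2}$; the loss of one derivative on $f$ is paid by a $+1$ shift in the first $g$-norm, while the second $g$-norm is unchanged because the $T_g f$-estimate depends on $\|\dot{\Delta}_k f\|_{L^p}$ only at a single scale.

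The hard part will be the careful bookkeeping of the low-frequency paraproduct sums: the hypothesis $\sigma_1>1-N/2$ is sharp, equivalent to $\sigma_1+N/2-1>0$, and this single inequality is what forces the convergence of every $\sum_{l\leq k-2}$ that appears. The borderline cases $p=2$ (where the two $g$-norms coincide) and $\sigma_1=2N/p-N/2$ (where the geometric weight in the remainder estimate has ratio $1$) must be handled by invoking Young's convolution inequality $\ell^1\ast\ell^\infty\hookrightarrow\ell^\infty$ rather than a strict geometric series argument.
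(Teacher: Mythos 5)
Your overall skeleton coincides with the paper's: Bony's decomposition, the hypothesis $\sigma_1>1-\frac{N}{2}$ entering only through the geometric sum in $T_g f$, and the low-frequency restriction used only to absorb the positive shift in the $T_g f$ piece. Your sketch for \eqref{4.1} is a correct reconstruction of the argument the paper omits (it cites earlier work for \eqref{4.1}--\eqref{4.2} and proves only \eqref{4.2000}), and your $T_f g$ and $T_g f$ estimates for \eqref{4.2} check out. The gap is in the remainder term of \eqref{4.2}--\eqref{4.2000}. Pairing $\|\dot{\Delta}_k f\|_{L^N}\|\widetilde{\dot{\Delta}}_k g\|_{L^{2N/(N-2)}}$ directly in $L^2$ at the inner scale gives
$2^{-j\sigma_1}\|\dot{\Delta}_j R(f,g)\|_{L^2}\lesssim \sum_{k\geq j-3}2^{(k-j)\sigma_1}c_k\,\|f\|_{\dot{B}_{p,1}^{N/p-1}}\|g\|_{\dot{B}_{p,\infty}^{-\sigma_1+N/p-N/2+1}}$ with $(c_k)\in\ell^1$, and since $\sigma_1$ may well be positive (the case of interest has $\sigma_1$ near $\frac{2N}{p}-\frac{N}{2}>0$), the factor $2^{(k-j)\sigma_1}$ grows in $k$ and the sum need not converge (take $c_k\sim k^{-2}$). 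The condition $\sigma_1\leq \frac{2N}{p}-\frac{N}{2}$, which you correctly name as the borderline for the remainder, never actually enters your estimate because you have no gain at the outer scale $j$. The fix, which is the paper's \eqref{4.7}, is to apply H\"older $L^p\times L^p\to L^{p/2}$ at the inner scale and Bernstein $L^{p/2}\to L^2$ at the scale $j$ (legitimate since \eqref{1.5} forces $p\leq 4$), producing the factor $2^{j(2N/p-N/2)}$; the $k$-sum then carries the weight $2^{(k-j)(\sigma_1-2N/p+N/2)}$ with nonpositive exponent, and the endpoint is handled exactly by the $\ell^1\ast\ell^\infty$ remark you make. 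Working "only with low-frequency blocks" does not rescue your version: if you throw the outer gain away using $j\leq k_0$, the uncompensated factor $2^{-j\sigma_1}$ blows up as $j\to-\infty$.

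For \eqref{4.2000} there is a second, independent obstruction to "the same steps": $\dot{B}_{p,1}^{N/p-2}$ has nonpositive regularity index whenever $p\geq N/2$ (so for every admissible $p$ when $N=3$, e.g. $p=2$), hence it embeds into no Lebesgue space, and the Sobolev step that placed $f$ in $L^N$ has no analogue; you cannot pay the lost derivative solely by a $+1$ shift in the $g$-norm while keeping a fixed H\"older pairing for $T_f g$ and $R(f,g)$. The paper avoids any global embedding of $f$: it keeps both factors in $L^p$ blockwise and converts the low-frequency factor by Bernstein, $\|\dot{\Delta}_{k'}f\|_{L^{2p/(p-2)}}\lesssim 2^{k'(2N/p-N/2)}\|\dot{\Delta}_{k'}f\|_{L^p}$, the sum over $k'\leq k-2$ converging because $2+\frac{N}{p}-\frac{N}{2}>0$ under \eqref{1.5}, and for the remainder uses the outer-scale Bernstein gain described above. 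Your $T_g f$ treatment, by contrast, carries over verbatim, as you correctly observe.
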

\begin{proof}
 Inequalities \eqref{4.1} and \eqref{4.2} have been proved in \cite{bie2019optimal}. Here we only prove \eqref{4.2000}. Denote $p^\ast\equ\frac{2p}{p-2}$, i.e., $\frac{1}{p}+\frac{1}{p^\ast}=\frac{1}{2}$.
By \eqref{2.3}, we decompose $fg$ into $T_fg+R(f,g)+T_gf$. For the paraproduct term $T_fg$, we have
{\small\begin{equation}\label{4.6}
\begin{split}
&\quad\|\dot{\Delta}_j(T_fg)\|_{L^2}
=\|\sum_{|k-j|\leq 4}\dot{\Delta}_j(\dot{S}_{k-1}f\dot{\Delta}_kg)\|_{L^2}\leq\sum_{|k-j|\leq 4}\sum_{k^\prime\leq k-2}\|\dot{\Delta}_j(\dot{\Delta}_{k^\prime}f\dot{\Delta}_kg)\|_{L^2}\\[1ex]
&\lesssim\sum_{|k-j|\leq 4}\sum_{k^\prime\leq k-2}\|\dot{\Delta}_{k^\prime}f\|_{L^{p^\ast}}\|\dot{\Delta}_kg\|_{L^p}\lesssim\sum_{|k-j|\leq 4}\sum_{k^\prime\leq k-2}2^{k^\prime(\frac{2N}{p}-\frac{N}{2})}
\|\dot{\Delta}_{k^\prime}f\|_{L^p}\|\dot{\Delta}_kg\|_{L^p}\\[1ex]
&\lesssim\sum_{|k-j|\leq 4}\sum_{k^\prime\leq k-2}2^{k^\prime(\frac{2N}{p}-\frac{N}{2}+2-\frac{N}{p})}2^{k^\prime(\frac{N}{p}-2)}
\|\dot{\Delta}_{k^\prime}f\|_{L^p}
2^{k(\sigma_1-\frac{N}{p}+\frac{N}{2}-2)}2^{-k(\sigma_1-\frac{N}{p}+\frac{N}{2}-2)}\|\dot{\Delta}_kg\|_{L^p}\\[1ex]
&\lesssim 2^{j\sigma_1}\|f\|_{\dot{B}_{p,1}^{\frac{N}{p}-2}}\|g\|_{\dot{B}_{p,\infty}^{-\sigma_1+\frac{N}{p}-\frac{N}{2}+2}},
\end{split}
\end{equation}}
where we have used that $2+\frac{N}{p}-\frac{N}{2}> 0$ and $p^\ast\geq p$ since $p$ fulfills $2\leq p\leq \min(4,\frac{2N}{N-2})$.

For the remainder term, one gets
{\small\begin{equation}\label{4.7}
\begin{split}
&\quad\|\dot{\Delta}_jR(f,g)\|_{L^2}\leq \sum_{k\geq j-3}\sum_{|k-k^\prime|\leq 1}
\|\dot{\Delta}_j(\dot{\Delta}_{k}f\dot{\Delta}_{k^\prime}g)\|_{L^2}\\[1ex]
&\lesssim 2^{j(\frac{2N}{p}-\frac{N}{2})}\sum_{k\geq j-3}\sum_{|k-k^\prime|\leq 1}2^{k(2-\frac{N}{p})}2^{k(\frac{N}{p}-2)}\|\dot{\Delta}_{k}f\|_{L^{p}}2^{k^\prime(\sigma_1-\frac{N}{p}+\frac{N}{2}-2)}
2^{-k^\prime(\sigma_1-\frac{N}{p}+\frac{N}{2}-2)}\|\dot{\Delta}_{k^\prime}g\|_{L^{p}}\\[1ex]
&\lesssim2^{j(\frac{2N}{p}-\frac{N}{2})}\sum_{k\geq j-3}2^{k(\sigma_1-\frac{2N}{p}+\frac{N}{2})}c(k)\|f\|_{\dot{B}_{p,1}^{\frac{N}{p}-2}}
\|g\|_{\dot{B}_{p,\infty}^{-\sigma_1+\frac{N}{p}-\frac{N}{2}+2}}\\[1ex]
&
\lesssim2^{j\sigma_1}\|f\|_{\dot{B}_{p,1}^{\frac{N}{p}-2}}\|g\|_{\dot{B}_{p,\infty}^{-\sigma_1+\frac{N}{p}-\frac{N}{2}+2}},
\end{split}
\end{equation}}
here $\|c(k)\|_{l^1}=1$ and  we have used the condition $\sigma_1\leq \frac{2N}{p}-\frac{N}{2}$ in the last inequality.

For the  term $T_gf$,  we  have that
{\small\begin{equation}\label{4.8}
\begin{split}
&\quad\|\dot{\Delta}_j(T_gf)\|_{L^2}\leq \sum_{|k-j|\leq 4}\sum_{k^\prime\leq k-2}\|\dot{\Delta}_j(\dot{\Delta}_{k^\prime}g\dot{\Delta}_kf)\|_{L^2}\leq\sum_{|k-j|\leq 4}\sum_{k^\prime\leq k-2}\|\dot{\Delta}_{k^\prime}g\|_{L^{p^\ast}}\|\dot{\Delta}_kf\|_{L^p}\\[1ex]
&\lesssim\sum_{|k-j|\leq 4}\sum_{k^\prime\leq k-2}2^{k^\prime(\frac{2N}{p}-\frac{N}{2})}\|\dot{\Delta}_{k^\prime}g\|_{L^p}\|\dot{\Delta}_kf\|_{L^p}\\[1ex]
&\lesssim\sum_{|k-j|\leq 4}\sum_{k^\prime\leq k-2}2^{k^\prime(\frac{2N}{p}-\frac{N}{2}+\sigma_1-\frac{2N}{p}+N-1)}2^{k^\prime(-\sigma_1+\frac{2N}{p}-N+1)}
\|\dot{\Delta}_{k^\prime}g\|_{L^p}2^{k(2-\frac{N}{p})}2^{k(\frac{N}{p}-2)}
\|\dot{\Delta}_kf\|_{L^p}\\[1ex]
&\lesssim 2^{j(\sigma_1+\frac{N}{2}-\frac{N}{p}+1)}
\|f\|_{\dot{B}_{p,1}^{\frac{N}{p}-2}}\|g\|_{\dot{B}_{p,\infty}^{-\sigma_1+\frac{2N}{p}-N+1}},
\end{split}
\end{equation}}
where we used that $\sigma_1>1-\frac{N}{2}$ in the last inequality.

From \eqref{4.6} and \eqref{4.7}, we deduce
\be\label{4.9}
\|T_fg+R(f,g)\|_{\dot{B}_{2,\infty}^{-\sigma_1}}\lesssim\|f\|_{\dot{B}_{p,1}^{\frac{N}{p}-2}}
\|g\|_{\dot{B}_{p,\infty}^{-\sigma_1+\frac{N}{p}-\frac{N}{2}+2}}
\ee
and from \eqref{4.8}, we get that
\be\label{4.10}
\|T_gf\|_{\dot{B}_{2,\infty}^{-\sigma_1}}^\ell\leq
\|T_gf\|_{\dot{B}_{2,\infty}^{-\sigma_1+\frac{N}{p}-\frac{N}{2}-1}}^\ell
\lesssim\|f\|_{\dot{B}_{p,1}^{\frac{N}{p}-2}}\|g\|_{\dot{B}_{p,\infty}^{-\sigma_1+\frac{2N}{p}-N+1}}.
\ee
Note that  only here we used the low frequency condition to ensure that
\be\label{A1}
\|u\|_{\dot{B}_{2,\infty}^{-\sigma_1}}^\ell\leq
\|u\|_{\dot{B}_{2,\infty}^{-\sigma_1+\frac{N}{p}-\frac{N}{2}-1}}^\ell
\ee
since $\frac{N}{p}-\frac{N}{2}-1<0$ when $p\geq 2$. Combining \eqref{4.9} and \eqref{4.10}, we derive that \eqref{4.2000} holds true.
\end{proof}
\begin{re}
{\rm  We can see from  the proof of \eqref{4.2000} that if $f=g$, using the property of symmetry, we no longer need the estimate \eqref{4.10} and in turn \eqref{A1} at the low-frequency. Then, we delete the condition of low-frequency in \eqref{4.2000} and get directly that
\be\label{4.2002}
\|ff\|_{\dot{B}_{2,\infty}^{-\sigma_1}}\lesssim\|f\|_{\dot{B}_{p,1}^{\frac{N}{p}-2}}
\|f\|_{\dot{B}_{p,\infty}^{-\sigma_1+\frac{N}{p}-\frac{N}{2}+2}}.
\ee
Similarly, \eqref{4.2} becomes
\be\label{4.2001}
\|ff\|_{\dot{B}_{2,\infty}^{-\sigma_1}}\lesssim\|f\|_{\dot{B}_{p,1}^{\frac{N}{p}-1}}
\|f\|_{\dot{B}_{p,\infty}^{-\sigma_1+\frac{N}{p}-\frac{N}{2}+1}}.
\ee}
\end{re}
From Bony's decomposition \eqref{2.3} and Proposition \ref{pr2.2}, we could as well infer the following product estimates:
\begin{col}\label{co2.1} {\rm(}\cite{bahouri2011fourier}, \cite{danchin2002zero}{\rm )}
{\rm\,(i)} Let $s>0$ and $1\leq p,r\leq \infty$. Then $\dot{B}_{p,r}^s\cap L^\infty$ is an algebra and
$$
\|uv\|_{\dot{B}_{p,r}^s}\lesssim \|u\|_{L^\infty}\|v\|_{\dot{B}_{p,r}^s}+\|v\|_{L^\infty}\|u\|_{\dot{B}_{p,r}^s}.
$$

{\rm (ii)}\,If $u\in\dot{B}_{p_1,1}^{s_1}$ and $v\in\dot{B}_{p_2,1}^{s_2}$ with $1\leq p_1\leq p_2\leq \infty,~s_1\leq \frac{N}{p_1},~s_2\leq \frac{N}{p_2}$ and $s_1+s_2>0$, then
$uv\in\dot{B}_{p_2,1}^{s_1+s_2-\frac{N}{p_1}}$ and there exists a constant $C$, depending only on $N, s_1, s_2, p_1$ and $p_2$, such that
\be\nonumber
\|uv\|_{\dot{B}_{p_2,1}^{s_1+s_2-\frac{N}{p_1}}}\leq C\|u\|_{\dot{B}_{p_1,1}^{s_1}}
\|v\|_{\dot{B}_{p_2,1}^{s_2}}.
\ee
\end{col}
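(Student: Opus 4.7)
The plan is to prove the inequality \eqref{4.2000}, since \eqref{4.1} and \eqref{4.2} are cited from \cite{bie2019optimal}. The whole argument is built on Bony's decomposition $fg=T_fg+R(f,g)+T_gf$, and each of the three pieces is controlled by taking the $L^2$ norm of its $j$-th dyadic block, applying H\"older with the conjugate exponent $p^{\ast}\equ 2p/(p-2)$ (so $1/p+1/p^{\ast}=1/2$; note that $p^{\ast}\ge p$ under assumption \eqref{1.5} since then $p\le 4$), and then trading $L^{p^{\ast}}$ for $L^p$ via Bernstein's lemma at the cost of a factor $2^{k(2N/p-N/2)}$ on each block. The regularity exponents will be distributed between $f$ and $g$ so that on the $f$ side one recovers exactly the $\dot{B}_{p,1}^{N/p-2}$ norm.

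For the paraproduct $T_fg$ I would expand $\dot{\Delta}_j(T_fg)=\sum_{|k-j|\le 4}\dot{\Delta}_j(\dot{S}_{k-1}f\,\dot{\Delta}_kg)$, use H\"older as $L^{p^{\ast}}\cdot L^p$, and then read off $f$ in $\dot{B}_{p,1}^{N/p-2}$ and $g$ in $\dot{B}_{p,\infty}^{-\sigma_1+N/p-N/2+2}$. The inner sum $\sum_{k'\le k-2}$ converges because the total power of $2^{k'}$ is $2+N/p-N/2$, which is strictly positive thanks to $p\le 2N/(N-2)$ in \eqref{1.5}. No low-frequency restriction is needed here.

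For the remainder $R(f,g)$ I would bound $\dot{\Delta}_j(\dot{\Delta}_kf\,\dot{\Delta}_{k'}g)$ first in $L^{p/2}$ and then use Bernstein on the outer $\dot{\Delta}_j$ to lift to $L^2$, which is precisely what produces the global $2^{j(2N/p-N/2)}$ factor. Distributing derivatives so that $f$ appears with regularity $N/p-2$ and $g$ with regularity $-\sigma_1+N/p-N/2+2$ leaves a geometric series $\sum_{k\ge j-3}2^{k(\sigma_1-2N/p+N/2)}$, and this converges exactly because of the upper bound $\sigma_1\le 2N/p-N/2$ assumed in the statement. This again works without any low-frequency cutoff.

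The main obstacle is the reversed paraproduct $T_gf$. The same H\"older/Bernstein strategy naturally yields a bound in $\dot{B}_{2,\infty}^{-\sigma_1+N/p-N/2-1}$ rather than in $\dot{B}_{2,\infty}^{-\sigma_1}$, because the factor $2^{k'(2N/p-N/2)}$ coming from Bernstein now sits on the low-frequency block of $g$ and is compensated by an extra $2^{k'(-\sigma_1+2N/p-N+1)}$; the two combine to shift the index by $N/p-N/2-1$. This shift is a loss whenever $p>2$, and is the sole reason why \eqref{4.2000} is stated only for the low-frequency part: restricting to frequencies below $2^{k_0+1}$ allows the embedding $\|u\|_{\dot{B}_{2,\infty}^{-\sigma_1}}^{\ell}\lesssim \|u\|_{\dot{B}_{2,\infty}^{-\sigma_1+N/p-N/2-1}}^{\ell}$, since $N/p-N/2-1\le 0$. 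Finally, the convergence of the sum $\sum_{k'\le k-2}2^{k'(\sigma_1+N/2-1)}$ forces the lower bound $\sigma_1>1-N/2$ (which is weaker than the assumed $\sigma_1>2-N/2$, hence automatic). Collecting the three contributions yields \eqref{4.2000}.
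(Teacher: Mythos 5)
Your proof addresses the wrong statement. The result to be proven is Corollary \ref{co2.1}: part (i), the algebra estimate $\|uv\|_{\dot{B}_{p,r}^s}\lesssim \|u\|_{L^\infty}\|v\|_{\dot{B}_{p,r}^s}+\|v\|_{L^\infty}\|u\|_{\dot{B}_{p,r}^s}$ for $s>0$, and part (ii), the bilinear estimate $\dot{B}_{p_1,1}^{s_1}\times\dot{B}_{p_2,1}^{s_2}\to\dot{B}_{p_2,1}^{s_1+s_2-N/p_1}$ under $s_1\le N/p_1$, $s_2\le N/p_2$, $s_1+s_2>0$. What you outline instead is a proof of inequality \eqref{4.2000} of Proposition \ref{pr4.1}, i.e.\ the non-classical low-frequency bound for $\|fg\|_{\dot{B}_{2,\infty}^{-\sigma_1}}^\ell$. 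These are genuinely different results: Corollary \ref{co2.1} has third Besov index $1$ (or general $r$) rather than $\infty$, allows arbitrary admissible $(p_1,p_2,s_1,s_2)$ rather than the fixed exponents tied to $\sigma_1$ and $N/p$, carries no low-frequency cutoff, and its part (i) requires an $L^\infty$-weighted estimate that never appears in your argument. Nothing you wrote yields either of the two claimed inequalities, so as a proof of Corollary \ref{co2.1} the proposal is vacuous, however reasonable it may be as a sketch of Proposition \ref{pr4.1}.

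The route the paper intends (and attributes to \cite{bahouri2011fourier} and \cite{danchin2002zero}) is short: split $uv=T_vu+T_uv+R(u,v)$ by Bony's decomposition \eqref{2.3} and apply Proposition \ref{pr2.2}. For (i), the paraproduct is continuous from $L^\infty\times\dot{B}_{p,r}^s$ to $\dot{B}_{p,r}^s$, which handles $T_uv$ and $T_vu$, and the remainder is controlled via its continuity on $\dot{B}_{\infty,\infty}^{0}\times\dot{B}_{p,r}^{s}$ with $0+s>0$, together with $L^\infty\hookrightarrow\dot{B}_{\infty,\infty}^0$. For (ii), embed $u\in\dot{B}_{p_1,1}^{s_1}\hookrightarrow\dot{B}_{\infty,1}^{s_1-N/p_1}$ and use the second paraproduct estimate of Proposition \ref{pr2.2} to place $T_uv$ in $\dot{B}_{p_2,1}^{s_1+s_2-N/p_1}$ (the hypothesis $s_1\le N/p_1$ gives the required nonpositive regularity, the endpoint $s_1=N/p_1$ being covered by the $L^\infty$ case); treat $T_vu$ symmetrically using $s_2\le N/p_2$ and then Sobolev-embed from exponent $p_1$ into $p_2$; and control $R(u,v)$ by the remainder estimate, which is exactly where $s_1+s_2>0$ enters. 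If you want a self-contained proof, your block-by-block H\"older--Bernstein computations could in principle be recalibrated to these exponents, but as written they are tied to the specific indices of \eqref{4.2000} and do not transfer.
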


\begin{col}\label{co2.2}
Let $\sigma_1$ satisfy $1-\frac{N}{2}<\sigma_1\leq\frac{2N}{p}-\frac{N}{2}\,(N\geq 2)$ and  $p$ fulfill \eqref{1.5}, then we have
\be\nonumber
\|fg\|_{\dot{B}_{p,\infty}^{-\sigma_1+\frac{N}{p}-\frac{N}{2}+1}}\lesssim\|f\|_{\dot{B}_{p,1}^{\frac{N}{p}}}
\|g\|_{\dot{B}_{p,\infty}^{-\sigma_1+\frac{N}{p}-\frac{N}{2}+1}},
\ee
and
\be\nonumber
\|fg\|_{\dot{B}_{p,\infty}^{-\sigma_1+\frac{2N}{p}-N+1}}\lesssim\|f\|_{\dot{B}_{p,1}^{\frac{N}{p}}}
\|g\|_{\dot{B}_{p,\infty}^{-\sigma_1+\frac{2N}{p}-N+1}}.
\ee
Moreover, if $2-\frac{N}{2}<\sigma_1\leq\frac{2N}{p}-\frac{N}{2}\,(N\geq 3)$ and  $p$ satisfies \eqref{1.5}, it holds that
\be\nonumber
\|fg\|_{\dot{B}_{p,\infty}^{-\sigma_1+\frac{2N}{p}-N+2}}\lesssim\|f\|_{\dot{B}_{p,1}^{\frac{N}{p}}}
\|g\|_{\dot{B}_{p,\infty}^{-\sigma_1+\frac{2N}{p}-N+2}}.
\ee
\end{col}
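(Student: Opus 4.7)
The plan is to reduce the three estimates to a single master inequality: whenever $s<\frac{N}{p}$ and $s+\frac{N}{p}>0$, one has
\begin{equation*}
\|fg\|_{\dot{B}_{p,\infty}^{s}}\lesssim \|f\|_{\dot{B}_{p,1}^{\frac{N}{p}}}\|g\|_{\dot{B}_{p,\infty}^{s}}.
\end{equation*}
Granting this, the three stated inequalities are obtained by inserting $s=-\sigma_1+\frac{N}{p}-\frac{N}{2}+1$, $s=-\sigma_1+\frac{2N}{p}-N+1$, and $s=-\sigma_1+\frac{2N}{p}-N+2$, respectively, and checking that each satisfies both bounds. This verification is routine: the upper bound $s<\frac{N}{p}$ follows from $\sigma_1>1-\frac{N}{2}$ in the first case, and from $\sigma_1>1-\frac{N}{2}\geq \frac{N}{p}-N+1$ (resp.\ $\sigma_1>2-\frac{N}{2}\geq \frac{N}{p}-N+2$) together with $p\geq 2$ in the other two; the lower bound $s+\frac{N}{p}>0$ follows from $\sigma_1\leq \frac{2N}{p}-\frac{N}{2}$ combined with $p\leq \frac{2N}{N-2}$ from \eqref{1.5}.

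To prove the master inequality I would apply Bony's decomposition \eqref{2.3}, $fg=T_{f}g+T_{g}f+R(f,g)$, and estimate each piece separately. The paraproduct $T_{f}g$ is immediate from Proposition \ref{pr2.2}(1) and the embedding $\dot{B}_{p,1}^{N/p}\hookrightarrow L^{\infty}$, which gives $\|T_{f}g\|_{\dot{B}_{p,\infty}^{s}}\lesssim \|f\|_{L^{\infty}}\|g\|_{\dot{B}_{p,\infty}^{s}}\lesssim \|f\|_{\dot{B}_{p,1}^{N/p}}\|g\|_{\dot{B}_{p,\infty}^{s}}$. For $T_{g}f$ I would work at the level of Littlewood-Paley blocks: by Bernstein and the condition $s<N/p$,
\begin{equation*}
\|\dot{S}_{k-1}g\|_{L^{\infty}}\lesssim \sum_{k'\leq k-2}2^{k'N/p}\|\dot{\Delta}_{k'}g\|_{L^{p}}\lesssim \sum_{k'\leq k-2}2^{k'(N/p-s)}\|g\|_{\dot{B}_{p,\infty}^{s}}\lesssim 2^{k(N/p-s)}\|g\|_{\dot{B}_{p,\infty}^{s}},
\end{equation*}
and pairing with $\|\dot{\Delta}_{k}f\|_{L^{p}}\lesssim c_{k}2^{-kN/p}\|f\|_{\dot{B}_{p,1}^{N/p}}$ (where $\sum_{k}c_{k}=1$) yields the expected $2^{-js}$ factor after summing over $|k-j|\leq 4$.

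The only piece that requires genuine care is the remainder $R(f,g)$, and it is precisely this term that produces the constraint $s+\frac{N}{p}>0$. Since the frequencies of $\dot{\Delta}_{k}f\,\widetilde{\dot{\Delta}}_{k}g$ sit in a ball of radius $\lesssim 2^{k}$, I would invoke the Bernstein embedding $L^{p/2}\hookrightarrow L^{p}$ after frequency localization, which is available thanks to $p\geq 2$, together with H\"older's inequality:
\begin{equation*}
\|\dot{\Delta}_{j}R(f,g)\|_{L^{p}}\lesssim 2^{jN/p}\sum_{k\geq j-3}\|\dot{\Delta}_{k}f\|_{L^{p}}\|\widetilde{\dot{\Delta}}_{k}g\|_{L^{p}}\lesssim 2^{jN/p}\sum_{k\geq j-3}c_{k}2^{-k(N/p+s)}\|f\|_{\dot{B}_{p,1}^{N/p}}\|g\|_{\dot{B}_{p,\infty}^{s}}.
\end{equation*}
The hypothesis $s+N/p>0$ makes this geometric series converge to $2^{-j(N/p+s)}$ up to a multiplicative constant, leaving the clean factor $2^{-js}$ and completing the proof of the master inequality. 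The main delicacy is this remainder bound: it is what ties the upper bound $\sigma_1\leq \frac{2N}{p}-\frac{N}{2}$ to the restriction $p\leq \frac{2N}{N-2}$ in \eqref{1.5}, and for the third estimate it is also what forces the strengthened threshold $\sigma_1>2-\frac{N}{2}$, which in turn demands $N\geq 3$.
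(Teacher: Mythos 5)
Your route is essentially the one the paper intends: the corollary is stated there without proof as a consequence of Bony's decomposition \eqref{2.3} and Proposition \ref{pr2.2}, and your blockwise treatment of $T_fg$, $T_gf$ and $R(f,g)$ is exactly that argument carried out in detail; the reduction to a single master inequality with $s<\frac{N}{p}$ and a lower bound on $s+\frac{N}{p}$ is sound. Two small points. First, your verification of the strict lower bound for the second estimate is not quite right: with $s=-\sigma_1+\frac{2N}{p}-N+1$ one only gets $s+\frac{N}{p}\geq \frac{N}{p}-\frac{N}{2}+1\geq 0$, and equality does occur at the admissible corner $N\geq 5$, $p=\frac{2N}{N-2}$, $\sigma_1=\frac{2N}{p}-\frac{N}{2}$, so the master inequality as you stated it (with strict positivity) does not literally cover that case. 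This is harmless, because your own displayed remainder estimate closes at the borderline: the weights $c_k$ coming from $\|f\|_{\dot{B}_{p,1}^{N/p}}$ are summable, so $\sum_{k\geq j-3}c_k2^{-k(N/p+s)}\lesssim 2^{-j(N/p+s)}$ already when $s+\frac{N}{p}\geq 0$, no geometric decay being needed; you should simply state and prove the master inequality under $s+\frac{N}{p}\geq 0$ (the condition $s<\frac{N}{p}$, needed for $T_gf$, must remain strict, and it does hold strictly in all three cases). Second, your closing remark misattributes the threshold $\sigma_1>2-\frac{N}{2}$ in the third estimate to the remainder term: as your own earlier verification shows, for that case the remainder condition is comfortably satisfied ($s+\frac{N}{p}\geq 1$), and it is the paraproduct $T_gf$, i.e.\ the requirement $s<\frac{N}{p}$, that forces $\sigma_1>\frac{N}{p}-N+2$, hence $\sigma_1>2-\frac{N}{2}$ when $p=2$.
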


We also need the following  composition lemma (see \cite{bahouri2011fourier,danchin2000global, runst1996sobolev}).
\begin{prop}\label{pra.4}

Let $F:\mathbb{R}\rightarrow \mathbb{R}$ be smooth with $F(0)=0$. For all $1\leq p,r\leq \infty$ and $s>0$, it holds that $F(u)\in \dot{B}_{p,r}^s\cap L^\infty$ for $u\in\dot{B}_{p,r}^s\cap L^\infty$, and
$$
\|F(u)\|_{\dot{B}_{p,r}^s}\leq C\|u\|_{\dot{B}_{p,r}^s}
$$
with $C$ depending only on $\|u\|_{L^\infty}$, $F^\prime$ (and higher derivatives), $s, p$ and $N$.

In the case $s>-\min(\frac{N}{p}, \frac{N}{p^\prime})$, then $u\in\dot{B}_{p,r}^s\cap\dot{B}_{p,1}^{\frac{N}{p}}$ implies that
$F(u)\in\dot{B}_{p,r}^s\cap\dot{B}_{p,1}^{\frac{N}{p}}$, and
$$
\|F(u)\|_{\dot{B}_{p,r}^s}\leq C(1+\|u\|_{\dot{B}_{p,1}^{\frac{N}{p}}})\|u\|_{\dot{B}_{p,r}^s},
$$
where $\frac{1}{p}+\frac{1}{p^\prime}=1$.
\end{prop}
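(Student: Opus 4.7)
The plan is to prove both parts via Meyer's paralinearization technique, starting from the telescoping decomposition
\[ F(u) - F(0) = \sum_{j\in\mathbb{Z}} \bigl(F(\dot{S}_{j+1}u) - F(\dot{S}_j u)\bigr) = \sum_{j\in\mathbb{Z}} m_j\, \dot{\Delta}_j u, \]
where $m_j := \int_0^1 F'\bigl(\dot{S}_j u + \tau\, \dot{\Delta}_j u\bigr)\,d\tau$. Since $u\in L^\infty$ (by hypothesis in the first part, and via the critical embedding $\dot{B}_{p,1}^{N/p}\hookrightarrow L^\infty$ in the second), one has $\|m_j\|_{L^\infty}\leq C(\|u\|_{L^\infty},F')$ uniformly in $j$. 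Convergence of the series in $\mathcal{Y}'(\mathbb{R}^N)$ follows from $F(0)=0$ together with the homogeneous Besov hypotheses ensuring $\dot{S}_j u\to 0$ in a suitable sense as $j\to -\infty$.

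For the case $s>0$, I would work directly from this identity without invoking paralinearization. Applying $\dot{\Delta}_k$ and using Bony's decomposition \eqref{2.3} on each product $m_j\,\dot{\Delta}_j u$, one observes that when $j\leq k-N_0$ for a constant $N_0$ depending only on the support of $\varphi$, the spectra of both factors lie in a ball of radius $\sim 2^{j+1}\ll 2^{k-1}$, so the contribution is annihilated by $\dot{\Delta}_k$ after a paraproduct/remainder analysis. The surviving high-frequency part $j\geq k-N_0$ yields, via Bernstein's inequality and the uniform bound on $\|m_j\|_{L^\infty}$,
\[ 2^{ks}\|\dot{\Delta}_k F(u)\|_{L^p} \leq C\sum_{j\geq k-N_0} 2^{(k-j)s}\,\bigl(2^{js}\|\dot{\Delta}_j u\|_{L^p}\bigr). \]
Since $s>0$, the kernel $n\mapsto 2^{-ns}\mathbf{1}_{n\geq -N_0}$ lies in $\ell^1(\mathbb{Z})$, so Young's convolution inequality on the $\ell^r$-side delivers $\|F(u)\|_{\dot{B}_{p,r}^s}\leq C(\|u\|_{L^\infty},F)\|u\|_{\dot{B}_{p,r}^s}$, which is the first assertion.

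For the case $-\min(N/p,N/p')<s$, I would switch to Bony's paralinearization $F(u) = T_{F'(u)} u + \mathcal{R}_F(u)$, where $\mathcal{R}_F(u):=F(u)-T_{F'(u)} u$. The paraproduct is controlled by Proposition \ref{pr2.2}(1): $\|T_{F'(u)}u\|_{\dot{B}_{p,r}^s}\leq C\|F'(u)\|_{L^\infty}\|u\|_{\dot{B}_{p,r}^s}$, and $\|F'(u)\|_{L^\infty}\leq C(1+\|u\|_{\dot{B}_{p,1}^{N/p}})$ via the embedding into $L^\infty$. For the remainder, I would revisit the telescoping formula and exploit that $m_j - \dot{S}_{j-1} F'(u)$ carries a frequency gap of order $2^j$, which allows one to gain $N/p$ derivatives and conclude $\|\mathcal{R}_F(u)\|_{\dot{B}_{p,r}^{s+N/p}}\leq C(1+\|u\|_{\dot{B}_{p,1}^{N/p}})\|u\|_{\dot{B}_{p,r}^s}$; under $s+N/p>0$ this combines with $\dot{B}_{p,r}^{s+N/p}\hookrightarrow \dot{B}_{p,r}^s$ to finish the bound.

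The main obstacle will be the sharp threshold $s>-\min(N/p,N/p')$. The $-N/p$ side comes from the remainder analysis just sketched; the $-N/p'$ side enters through the dual decomposition $F(u) = T_{F'(u)} u + T_u F'(u) + R(F'(u), u)$, in which the pure remainder $R(F'(u),u)$ lies in $\dot{B}_{p,r}^{s+N/p'}$ by Proposition \ref{pr2.2}(2) and requires $s+N/p'>0$ for convergence. To close either decomposition, one needs $F'(u)\in \dot{B}_{p,1}^{N/p}$ with $\|F'(u)\|_{\dot{B}_{p,1}^{N/p}}\leq C(1+\|u\|_{\dot{B}_{p,1}^{N/p}})$, which follows by applying the first part (already proved) to the smooth function $F'-F'(0)$ vanishing at zero; thus the positive-regularity case must be established first and then fed into the second. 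Verifying the precise derivative gain in the remainder estimate up to the critical threshold is the subtle point that drives the whole argument.
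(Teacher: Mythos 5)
The paper itself offers no proof of this proposition: it is quoted verbatim from \cite{bahouri2011fourier,danchin2000global,runst1996sobolev}, so your sketch can only be judged against the standard arguments in those references, whose outline (Meyer's first linearization for $s>0$, then a product/paraproduct argument for the negative range) you do follow. Within that outline, however, there is a genuine error at the heart of your case $s>0$. You claim that for $j\le k-N_0$ the term $m_j\,\dot{\Delta}_j u$ has spectrum in a ball of radius $\sim 2^{j+1}$ and is therefore annihilated by $\dot{\Delta}_k$. This is false: although $\dot{S}_j u+\tau\dot{\Delta}_j u$ is band-limited, $m_j=\int_0^1F'(\dot{S}_ju+\tau\dot{\Delta}_ju)\,d\tau$ is a nonlinear composition and has no compactly supported Fourier transform; the loss of spectral localization under composition is exactly what makes this theorem nontrivial. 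A Bony decomposition of $m_j\,\dot{\Delta}_ju$ does not rescue the claim either: only $T_{m_j}\dot{\Delta}_ju$ is killed by $\dot{\Delta}_k$, while $T_{\dot{\Delta}_ju}m_j$ and $R(m_j,\dot{\Delta}_ju)$ see the high frequencies of $m_j$ and survive. Consequently your displayed inequality, which retains only $j\ge k-N_0$, does not bound $\|\dot{\Delta}_kF(u)\|_{L^p}$. The missing ingredient (cf.\ Theorem 2.61 of \cite{bahouri2011fourier}) is the estimate $\|D^\alpha m_j\|_{L^\infty}\le C_\alpha 2^{j|\alpha|}$, obtained from Bernstein applied to $\dot{S}_ju$ and the chain rule with constants depending on $\|u\|_{L^\infty}$ and the derivatives of $F$; writing $\|\dot{\Delta}_k(m_j\dot{\Delta}_ju)\|_{L^p}\lesssim 2^{-km}\|D^m(m_j\dot{\Delta}_ju)\|_{L^p}$ for an integer $m>s$ then produces the convergent factor $2^{(k-j)(s-m)}$ for $j\le k$, which together with your (correct) treatment of $j\ge k$ closes the argument via Young's inequality in $\ell^r$.

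Two further remarks on the second part. The identity $F(u)=T_{F'(u)}u+T_uF'(u)+R(F'(u),u)$ is not Bony's decomposition of $F(u)$, since $F(u)\ne F'(u)\,u$ in general; the correct starting point is $F(u)=F'(0)\,u+u\,G(u)$ with $G(v)=\int_0^1\bigl(F'(\tau v)-F'(0)\bigr)\,d\tau$ smooth and vanishing at $0$, after which one applies the already-established first part to $G$ and the product law $\|uv\|_{\dot{B}_{p,r}^s}\lesssim\|u\|_{\dot{B}_{p,1}^{N/p}}\|v\|_{\dot{B}_{p,r}^s}$, valid precisely in the range $-\min(N/p,N/p')<s\le N/p$ (both thresholds come from this product law, not from a paralinearization remainder). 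This route is more elementary and is the one taken in \cite{danchin2000global}; by contrast, the remainder bound $\|F(u)-T_{F'(u)}u\|_{\dot{B}_{p,r}^{s+N/p}}\lesssim(1+\|u\|_{\dot{B}_{p,1}^{N/p}})\|u\|_{\dot{B}_{p,r}^s}$ that you invoke is itself a theorem of comparable depth, and your sketch gives no argument for it beyond asserting a "frequency gap" for $m_j-\dot{S}_{j-1}F'(u)$, which suffers from the same localization issue as above.
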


The following commutator estimate (see \cite{danchin2016optimal}) has been employed in the high-frequency estimates.
\begin{prop}\label{pra.5}
Let $1\leq p, p_1\leq \infty$ and
\be\label{A4}
-\min\Big{\{}\frac{N}{p_1}, \frac{N}{p^\prime}\Big{\}}<\sigma\leq 1+
\min\Big{\{}\frac{N}{p}, \frac{N}{p_1}\Big{\}}.
\ee
There exists a constant $C>0$ depending only on $\sigma$ such that
for all $j\in\mathbb{Z}$ and $i\in \{1,\cdots, N\}$, we have
\be\label{A5}
\|[v\cdot\nabla,\partial_i{\dot{\Delta}_j}]a\|_{L^p}\leq Cc_j 2^{-j(\sigma-1)}
\|\nabla v\|_{\dot{B}_{p_1,1}^{\frac{N}{p_1}}}\|\nabla a\|
_{\dot{B}_{p,1}^{\sigma-1}},
\ee
where the commutator $[\cdot,\cdot]$ is defined by $[f,g]=fg-gf$, and
 $(c_j)_{j\in\mathbb{Z}}$ denotes a sequence such that $\|(c_j)\|_{\ell^1}\leq 1$ and $\frac{1}{p^\prime}+\frac{1}{p}=1$.
\end{prop}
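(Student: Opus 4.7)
The plan is to apply Bony's decomposition \eqref{2.3} both to the product $v^{k}\partial_{k}a$ inside $\partial_{i}\dot{\Delta}_{j}(\cdot)$ and to the outside product $v^{k}\,\partial_{i}\partial_{k}\dot{\Delta}_{j}a$, and then to regroup. With implicit summation over $k$, this produces the identity
\[
[v\cdot\nabla,\partial_{i}\dot{\Delta}_{j}]a = [T_{v^{k}},\partial_{i}\dot{\Delta}_{j}]\partial_{k}a + T_{\partial_{k}\partial_{i}\dot{\Delta}_{j}a}v^{k} - \partial_{i}\dot{\Delta}_{j}T_{\partial_{k}a}v^{k} + R(v^{k},\partial_{k}\partial_{i}\dot{\Delta}_{j}a) - \partial_{i}\dot{\Delta}_{j}R(v^{k},\partial_{k}a),
\]
which I denote by $I_{1}+\cdots+I_{5}$. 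The key observation is that only $I_{1}$ is a genuine paraproduct commutator which has to gain a derivative; the four remaining pieces are controlled directly by the continuity of paraproducts and remainders in Proposition \ref{pr2.2}.

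For $I_{1}$, I would expand $[T_{v^{k}},\partial_{i}\dot{\Delta}_{j}]=\sum_{|k'-j|\le N_{0}}[S_{k'-1}v^{k},\partial_{i}\dot{\Delta}_{j}]$ and write each bracket as convolution against the kernel $2^{jN}(\partial_{i}h)(2^{j}(x-\cdot))$, where $h$ denotes the convolution kernel of $\dot{\Delta}_{0}$. A first-order Taylor expansion of $S_{k'-1}v^{k}(y)$ around $x$ yields the pointwise estimate
\[
\bigl|[S_{k'-1}v^{k},\partial_{i}\dot{\Delta}_{j}]f(x)\bigr|\lesssim 2^{-j}\,\|\nabla S_{k'-1}v^{k}\|_{L^{\infty}}\,\bigl(\tilde{K}_{j}\ast|f|\bigr)(x),
\]
for an integrable kernel $\tilde{K}_{j}$ whose $L^{1}$-norm is bounded independently of $j$. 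Applied with $f=\dot{\Delta}_{k'}\partial_{k}a$, together with Bernstein's inequality (Lemma \ref{le2.1}) to absorb $2^{k'}\|\dot{\Delta}_{k'}a\|_{L^{p}}\approx\|\dot{\Delta}_{k'}\nabla a\|_{L^{p}}$ into $\|\nabla a\|_{\dot{B}_{p,1}^{\sigma-1}}$, and with the embedding $\dot{B}_{p_{1},1}^{N/p_{1}}\hookrightarrow L^{\infty}$ on $\nabla v$, this produces the desired bound $c_{j}\,2^{-j(\sigma-1)}\|\nabla v\|_{\dot{B}_{p_{1},1}^{N/p_{1}}}\|\nabla a\|_{\dot{B}_{p,1}^{\sigma-1}}$, with the $\ell^{1}$-summability of $(c_{j})$ inherited from the finite range $|k'-j|\le N_{0}$ together with the $\ell^{1}$ summability built into the third index of $\dot{B}_{p,1}^{\sigma-1}$.

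For $I_{2}$ and $I_{3}$, Proposition \ref{pr2.2}(1) (combined with Bernstein to exploit that one factor is frequency-localized near $2^{j}$) yields a bound of the same form, provided $\sigma-1\le\min(N/p,N/p_{1})$, which is precisely the upper constraint in \eqref{A4}. For $I_{4}$ and $I_{5}$, Proposition \ref{pr2.2}(2) applies provided the sum of the two regularity indices entering $R$ is positive, which reduces exactly to $\sigma>-\min(N/p_{1},N/p')$, the lower constraint in \eqref{A4}. The main obstacle is making the Taylor-expansion step for $I_{1}$ fully rigorous: one must verify that the remainder kernel $\tilde{K}_{j}$ really has $L^{1}$-norm $O(1)$ uniformly in $j$ (this uses the Schwartz decay of $h$), and then carefully repackage the resulting sum so that $(c_{j})$ is genuinely in $\ell^{1}$ with unit norm rather than only in $\ell^{\infty}$. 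A secondary subtlety is the upper endpoint $\sigma=1+\min(N/p,N/p_{1})$, where Proposition \ref{pr2.2}(1) must be used in its limiting form and an extra duality step is needed to recover $\ell^{1}$ summability.
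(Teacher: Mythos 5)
The paper does not prove Proposition \ref{pra.5} at all: it is imported verbatim from \cite{danchin2016optimal} (where it appears as a known commutator estimate, ultimately going back to the paralinearization arguments in \cite{bahouri2011fourier}), so there is no in-paper proof to compare against. Your argument is, in substance, exactly the standard proof of this lemma: split via Bony's decomposition into the paraproduct commutator $I_1$ plus the four ``easy'' pieces $I_2,\dots,I_5$, treat $I_1$ by the first-order Taylor/kernel estimate for $[f,\partial_i\dot\Delta_j]$, and handle $I_2,I_3$ by continuity of $T$ (which is where the upper bound $\sigma\le 1+\min\{N/p,N/p_1\}$ enters) and $I_4,I_5$ by continuity of $R$ (which is where $\sigma>-\min\{N/p_1,N/p'\}$ enters). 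Your identification of which hypothesis in \eqref{A4} is consumed by which term is correct, and the overall architecture is sound.

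One bookkeeping point you should fix: the convolution kernel of $\partial_i\dot\Delta_j$ is $2^{j(N+1)}(\partial_i h)(2^j\cdot)$, not $2^{jN}(\partial_i h)(2^j\cdot)$; the extra $2^{j}$ from the chain rule exactly cancels the $2^{-j}$ gained by the Taylor expansion, so the correct pointwise/commutator bound is
$\|[S_{k'-1}v^k,\partial_i\dot\Delta_j]g\|_{L^p}\lesssim\|\nabla S_{k'-1}v\|_{L^\infty}\|g\|_{L^p}$
\emph{without} the factor $2^{-j}$ you wrote. This does not affect your conclusion, because the weight $2^{-j(\sigma-1)}$ in \eqref{A5} is already produced by $\|\dot\Delta_{k'}\partial_k a\|_{L^p}\lesssim c_{k'}2^{-k'(\sigma-1)}\|\nabla a\|_{\dot B^{\sigma-1}_{p,1}}$ with $|k'-j|\le 4$, together with $\|\nabla S_{k'-1}v\|_{L^\infty}\lesssim\|\nabla v\|_{\dot B^{N/p_1}_{p_1,1}}$; but as written your intermediate estimate for $I_1$ is off by a factor $2^{-j}$ and would give a bound inconsistent with what you then claim. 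The remaining caveats you raise (uniform $L^1$ control of the rescaled kernel, the endpoint case of the paraproduct) are genuine but routine, and are resolved in the cited references.
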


At last,  we present the optimal regularity estimates for the heat equation (see e.g. \cite{bahouri2011fourier}).
\begin{prop}\label{pra.6}
Let $\sigma\in\mathbb{R},\,\, (p,r)\in [1,\infty]^2$ and $1\leq \rho_2\leq \rho_1\leq \infty$. Let $u$
satisfy
\be\nonumber
\left\{\begin{array}{ll}\medskip\D
\partial_t u-\mu \Delta u=f,\\ \D
u|_{t=0}=u_0.
\end{array}
\right.
\ee
Then for all $T>0$, the following a prior estimate is satisfied:
\be\nonumber
\mu^{\frac{1}{\rho_1}}\|u\|_{\widetilde{L}_T^{\rho_1}(\dot{B}_{p,r}^{\sigma+\frac{2}{\rho_1}})}
\lesssim \|u_0\|_{\dot{B}_{p,r}^\sigma}+\mu^{\frac{1}{\rho_2}-1}
\|f\|_{\widetilde{L}_T^{\rho_2}(\dot{B}_{p,r}^{\sigma-2+\frac{2}{\rho_2}})}.
\ee
\end{prop}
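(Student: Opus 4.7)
The plan is to frequency-localize the equation, derive a scalar ODE estimate for each dyadic block, and then reassemble the norms via Young's convolution inequality in time. Applying $\dot\Delta_j$ to the equation yields
\begin{equation}\nonumber
\partial_t u_j-\mu\Delta u_j=f_j,\qquad u_j|_{t=0}=u_{0,j},
\end{equation}
with the shorthand $u_j=\dot\Delta_j u$, $f_j=\dot\Delta_j f$, $u_{0,j}=\dot\Delta_j u_0$. Since ${\rm supp}\,\widehat{u_j}$ lies in a dyadic annulus of size $2^j$, the crucial tool is the smoothing estimate of the heat semigroup at a fixed frequency, namely
\begin{equation}\nonumber
\|e^{\mu t\Delta}v\|_{L^p}\lesssim e^{-c\mu 2^{2j}t}\|v\|_{L^p}\quad\text{for}\quad{\rm supp}\,\widehat v\subset\{2^{j-1}\leq|\xi|\leq 2^{j+1}\},
\end{equation}
uniformly in $j$ and $\mu$. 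This is exactly the place where Proposition \ref{pr2.3} enters the argument.

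To derive this smoothing bound, I would test the homogeneous equation against $|u_j|^{p-2}u_j$ for $1<p<\infty$, use the identity $-\int\Delta u_j\,|u_j|^{p-2}u_j\,dx=(p-1)\int|\nabla u_j|^2|u_j|^{p-2}dx$, and apply Proposition \ref{pr2.3} to bound this below by $c\,2^{2j}/p\cdot\|u_j\|_{L^p}^p$. Dividing by $\|u_j\|_{L^p}^{p-1}$ gives the scalar inequality $\frac{d}{dt}\|u_j\|_{L^p}+c\mu 2^{2j}\|u_j\|_{L^p}\leq\|f_j\|_{L^p}$, and Duhamel's formula then produces
\begin{equation}\nonumber
\|u_j(t)\|_{L^p}\leq e^{-c\mu 2^{2j}t}\|u_{0,j}\|_{L^p}+\int_0^t e^{-c\mu 2^{2j}(t-s)}\|f_j(s)\|_{L^p}\,ds.
\end{equation}
The boundary cases $p\in\{1,\infty\}$ can be handled separately by direct convolution with the heat kernel, which satisfies the same bound.

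Next I would take the $L^{\rho_1}(0,T)$ norm in time. The contribution of the initial datum is $(c\mu 2^{2j}\rho_1)^{-1/\rho_1}\|u_{0,j}\|_{L^p}$, which is $\lesssim(\mu 2^{2j})^{-1/\rho_1}\|u_{0,j}\|_{L^p}$. For the forcing term, Young's convolution inequality with exponents chosen so that $1+\tfrac{1}{\rho_1}=\tfrac{1}{a}+\tfrac{1}{\rho_2}$ (this forces $\tfrac{1}{a}=1-\tfrac{1}{\rho_2}+\tfrac{1}{\rho_1}\in[0,1]$, which is admissible precisely because $\rho_2\leq\rho_1$) gives
\begin{equation}\nonumber
\Big\|\int_0^t e^{-c\mu 2^{2j}(t-s)}\|f_j(s)\|_{L^p}\,ds\Big\|_{L^{\rho_1}_T}\lesssim(\mu 2^{2j})^{-(1-1/\rho_2+1/\rho_1)}\|f_j\|_{L^{\rho_2}_T(L^p)}.
\end{equation}

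Finally I multiply by $2^{j(\sigma+2/\rho_1)}$ and take the $\ell^r$ norm in $j$. The factor $(\mu 2^{2j})^{-1/\rho_1}2^{j(\sigma+2/\rho_1)}=\mu^{-1/\rho_1}2^{j\sigma}$ turns the initial-datum term into $\mu^{-1/\rho_1}\|u_0\|_{\dot B^{\sigma}_{p,r}}$, while $(\mu 2^{2j})^{-(1-1/\rho_2+1/\rho_1)}2^{j(\sigma+2/\rho_1)}=\mu^{-(1-1/\rho_2+1/\rho_1)}2^{j(\sigma-2+2/\rho_2)}$ turns the forcing term into $\mu^{1/\rho_2-1-1/\rho_1}\|f\|_{\widetilde L^{\rho_2}_T(\dot B^{\sigma-2+2/\rho_2}_{p,r})}$; multiplying through by $\mu^{1/\rho_1}$ gives the claimed estimate. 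The main technical obstacle is the first step, the uniform-in-$(j,\mu)$ smoothing bound — once it is in hand, the passage to Chemin–Lerner norms is a bookkeeping exercise whose exponents are dictated solely by the scaling $(\mu,\tau,\xi)\mapsto(\mu,\lambda^2\tau,\lambda\xi)$ of the heat equation, which is why the inequality is scale-invariant in $\mu$ in the correct combinations.
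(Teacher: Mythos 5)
Your proof is correct and is essentially the canonical argument: the paper itself offers no proof of Proposition \ref{pra.6} (it is quoted from the reference \cite{bahouri2011fourier}), but your route --- frequency-localize, obtain $\frac{d}{dt}\|u_j\|_{L^p}+c_p\mu 2^{2j}\|u_j\|_{L^p}\leq\|f_j\|_{L^p}$ via Proposition \ref{pr2.3}, then apply Duhamel and Young's convolution inequality in time before summing in $j$ --- is exactly the proof given there, and it is the same dyadic-block $L^p$ energy mechanism the paper deploys in the proof of Lemma \ref{lem3}. The only point worth tightening is the endpoint $p\in\{1,\infty\}$: the exponential factor $e^{-c\mu 2^{2j}t}$ there comes from convolving with the frequency-localized kernel $\mathcal{F}^{-1}\bigl(\varphi(2^{-j}\xi)e^{-\mu t|\xi|^2}\bigr)$, whose $L^1$ norm is $O(e^{-c\mu 2^{2j}t})$, not from the raw heat kernel (whose $L^1$ norm is identically $1$).
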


\section{Low-frequency and high-frequency estimates}\label{s:4}
In order to obtain a Lyapunov-type inequality for energy norms in next section, we now give the low-frequency and high-frequency estimates to system \eqref{2.2}.

\subsection{Low-frequency estimates}
\begin{lemma}\label{lem2}
Let $k_0$ be some integer. Then the solution $(a, \bv,\theta)$ to system \eqref{2.2} satisfies
\begin{equation}\label{3.100}
\frac{d}{dt}\|(a,\bv,\theta)\|_{\dot{B}_{2,1}^{\frac{N}{2}-1}}^\ell
+\|(a,\bv,\theta)\|_{\dot{B}_{2,1}^{\frac{N}{2}+1}}^\ell\lesssim \|(f,{\bf g},{m})\|_{\dot{B}_{2,1}^{\frac{N}{2}-1}}^\ell
\end{equation}
for all $t\geq 0$, where
$
\|z\|_{\dot{B}_{2,1}^s}^\ell\equ \sum_{k\leq k_0}2^{ks}\|\dot{\Delta}_kz\|_{L^2}\,\,\,{\rm for}\,\,\,s\in\mathbb{R}.
$
\end{lemma}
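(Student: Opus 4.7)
The plan is to derive a frequency-localized energy estimate for each dyadic block $k\leq k_0$, augmented by a Hoff-type cross term to generate dissipation for the density component, and then to sum the resulting pointwise-in-$k$ inequality with the weight $2^{k(N/2-1)}$.

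First I would apply $\dot{\Delta}_k$ to the three equations of \eqref{2.2}, test them in $L^2$ against $a_k:=\dot{\Delta}_k a$, $\bv_k:=\dot{\Delta}_k\bv$, and $\theta_k:=\dot{\Delta}_k\theta$, respectively, and add the resulting identities. Since $\int a_k\,{\rm div}\,\bv_k+\int\nabla a_k\cdot\bv_k=0$ and $\gamma\bigl(\int\nabla\theta_k\cdot\bv_k+\int{\rm div}\,\bv_k\,\theta_k\bigr)=0$ by integration by parts, the coupling terms cancel and one is left with
\begin{equation*}
\tfrac{1}{2}\tfrac{d}{dt}\bigl(\|a_k\|_{L^2}^2+\|\bv_k\|_{L^2}^2+\|\theta_k\|_{L^2}^2\bigr)-\int\widetilde{\mathcal{A}}\bv_k\cdot\bv_k+\beta\|\nabla\theta_k\|_{L^2}^2=\int f_k a_k+\int{\bf g}_k\cdot\bv_k+\int m_k\theta_k,
\end{equation*}
with $-\int\widetilde{\mathcal{A}}\bv_k\cdot\bv_k\gtrsim \|\nabla\bv_k\|_{L^2}^2$ by the Lamé coercivity $\mu>0$, $\nu>0$. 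Together with the lower Bernstein inequality this produces parabolic dissipation of order $2^{2k}$ for $\bv_k$ and $\theta_k$, but yields no direct control on $a_k$.

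To recover dissipation of the density block, I would introduce the Hoff-type modified energy
\begin{equation*}
\mathcal{L}_k^2 := \|a_k\|_{L^2}^2+\|\bv_k\|_{L^2}^2+\|\theta_k\|_{L^2}^2+2\eta\int\bv_k\cdot\nabla a_k,
\end{equation*}
for a small parameter $\eta>0$ to be chosen. Using \eqref{2.2}$_1$--\eqref{2.2}$_2$, a straightforward computation gives
\begin{equation*}
\tfrac{d}{dt}\int\bv_k\cdot\nabla a_k=-\|\nabla a_k\|_{L^2}^2+\|{\rm div}\,\bv_k\|_{L^2}^2+\int\widetilde{\mathcal{A}}\bv_k\cdot\nabla a_k-\gamma\int\nabla\theta_k\cdot\nabla a_k+\int{\bf g}_k\cdot\nabla a_k+\int\bv_k\cdot\nabla f_k.
\end{equation*}
At low frequencies $k\leq k_0$, Bernstein's inequality provides $\|\widetilde{\mathcal{A}}\bv_k\|_{L^2}\lesssim 2^{k_0}\|\nabla\bv_k\|_{L^2}$ and analogous estimates for every mixed term, so Young's inequality allows each of them to be absorbed into some combination of $\|\nabla\bv_k\|^2$, $\|\nabla\theta_k\|^2$, and $\eta\|\nabla a_k\|^2$ once $\eta$ is chosen smaller than a threshold depending only on $k_0$ and the fixed parameters $\mu,\nu,\beta,\gamma$. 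The same Bernstein bound ensures $\bigl|2\eta\int\bv_k\cdot\nabla a_k\bigr|\lesssim \eta\,2^{k_0}\bigl(\|a_k\|_{L^2}^2+\|\bv_k\|_{L^2}^2\bigr)$, so after a further reduction of $\eta$ one has $\mathcal{L}_k^2\simeq \|a_k\|_{L^2}^2+\|\bv_k\|_{L^2}^2+\|\theta_k\|_{L^2}^2$ uniformly in $k\leq k_0$.

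Combining the two identities and performing the absorptions sketched above, I expect to arrive at the pointwise-in-$k$ inequality
\begin{equation*}
\tfrac{1}{2}\tfrac{d}{dt}\mathcal{L}_k^2+c\,2^{2k}\mathcal{L}_k^2\lesssim \|\dot{\Delta}_k(f,{\bf g},m)\|_{L^2}\,\mathcal{L}_k \quad\text{for every } k\leq k_0,
\end{equation*}
which, after dividing by $\mathcal{L}_k$, becomes $\tfrac{d}{dt}\mathcal{L}_k+c\,2^{2k}\mathcal{L}_k\lesssim \|\dot{\Delta}_k(f,{\bf g},m)\|_{L^2}$. Multiplying by $2^{k(N/2-1)}$, summing over $k\leq k_0$, and using $2^{k(N/2-1)}\cdot 2^{2k}=2^{k(N/2+1)}$ together with the equivalence $\mathcal{L}_k\simeq \|\dot{\Delta}_k(a,\bv,\theta)\|_{L^2}$ then produces exactly \eqref{3.100}. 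The main technical care lies in keeping track of the $k_0$-dependence in the cross term; since $k_0$ is a fixed universal integer given by Theorem \ref{th1}, the required smallness of $\eta$ can be fixed once and for all, and no genuine obstacle arises.
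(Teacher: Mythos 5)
Your argument is correct and rests on the same mechanism as the paper's proof: a frequency-localized $L^2$ energy identity supplemented by a small cross term that creates dissipation for $a_k$, then Bernstein's inequality for $k\le k_0$, division by the modified energy, and summation against $2^{k(N/2-1)}$. The only real difference is the implementation. The paper first performs the reduction $\omega=\Lambda^{-1}{\rm div}\,\bv$, ${\bf \Omega}=\Lambda^{-1}{\rm curl}\,\bv$ (system \eqref{3.1}), so that the viscous term becomes a scalar Laplacian on each component, and it uses the corrected functional $\mathcal{J}_k^2=\|(a_k,\omega_k,{\bf \Omega}_k,\theta_k)\|_{L^2}^2+\xi\bigl(\|\Lambda a_k\|_{L^2}^2-2(a_k,\Lambda\omega_k)\bigr)$; you keep $\bv$ intact, replace the diagonalization by the $L^2$ coercivity of the Lam\'e operator (valid since $\mu>0$, $\nu=\lambda+2\mu>0$ and $\|{\rm div}\,\bv_k\|_{L^2}\le\|\nabla\bv_k\|_{L^2}$), and use the corrector $2\eta\int\bv_k\cdot\nabla a_k$. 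Since $\int\bv_k\cdot\nabla a_k=-(a_k,\Lambda\omega_k)$, your cross term coincides with the paper's up to the harmless extra $\xi\|\Lambda a_k\|_{L^2}^2$, which at low frequencies is anyway comparable to $\|a_k\|_{L^2}^2$. What the paper's splitting buys is a diagonal viscous structure that is reused verbatim later (the quantities $\omega_k,{\bf \Omega}_k$ and the identity \eqref{low-frequency} reappear in Lemma \ref{le3}); what your version buys is that you avoid the nonlocal operators $\Lambda^{-1}{\rm div}$ and $\Lambda^{-1}{\rm curl}$ and work directly with the physical unknowns. Your bookkeeping is sound: the derivative identity for $\int\bv_k\cdot\nabla a_k$ is exact, the $k_0$-dependent Bernstein bounds justify the absorption of all mixed terms and the equivalence $\mathcal{L}_k\simeq\|(a_k,\bv_k,\theta_k)\|_{L^2}$ once $\eta$ is fixed small, and the source terms $\eta\int{\bf g}_k\cdot\nabla a_k$, $\eta\int\bv_k\cdot\nabla f_k$ are indeed bounded by $\|(f_k,{\bf g}_k)\|_{L^2}\mathcal{L}_k$ at low frequencies, so the resulting Lyapunov inequality and the weighted summation yield \eqref{3.100} exactly as in the paper.
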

\begin{proof}
Denote $\Lambda^sz\equ \mathcal{F}^{-1}(|\xi|^s\mathcal{F}z),\,\,s\in\mathbb{R}$ and set
\be\label{4.125}
\omega=\Lambda^{-1}{\rm div}{\bf v},\,\,\,\,\, {\bf \Omega}=\Lambda^{-1}{\rm curl}{\bf v}.
\ee
Then system \eqref{2.2} can be rewritten as
\begin{equation}\label{3.1}
\left\{
\begin{array}{ll}
\partial_t a+\Lambda\omega=f,\\[1ex]
\partial_t\omega-\Delta\omega-\Lambda a-\gamma\Lambda\theta=\Lambda^{-1}{\rm div}{\bf g},\\[1ex]
\partial_t{\bf\Omega}-\frac{\mu}{\nu}\Delta{\bf \Omega}=\Lambda^{-1}{\rm curl}{\bf g},\\[1ex]
\partial_t\theta-\beta\Delta \theta+\gamma\Lambda\omega=m,\\[1ex]
\bv=-\Lambda^{-1}\nabla\omega+\Lambda^{-1}{\rm div}{\bf\Omega}.
\end{array}
\right.
\end{equation}
Denote $n_k\equ \dot{\Delta}_kn$. Let us consider the following frequency localized system:
\begin{equation}\label{3.2}
\left\{
\begin{array}{ll}
\partial_t a_k+\Lambda\omega_k=f_k,\\[1ex]
\partial_t\omega_k-\Delta\omega_k-\Lambda a_k-\gamma\Lambda\theta_k=\Lambda^{-1}{\rm div}{\bf g}_k,\\[1ex]
\partial_t{\bf\Omega}_k-\frac{\mu}{\nu}\Delta{\bf \Omega}_k=\Lambda^{-1}{\rm curl}{\bf g}_k,\\[1ex]
\partial_t\theta_k-\beta\Delta \theta_k+\gamma\Lambda\omega_k=m_k.
\end{array}
\right.
\end{equation}
Taking the $L^2$ scalar product of \eqref{3.2}$_1$ with $a_k$, \eqref{3.2}$_2$ with $\omega_k$,
 \eqref{3.2}$_3$ with ${\bf\Omega}_k$, and \eqref{3.2}$_4$ with $\theta_k$, respectively, we have that
\begin{equation}\label{3.3}
\frac{1}{2}\frac{d}{dt}\|a_k\|_{L^2}^2+(\Lambda\omega_k,a_k)=(f_k, a_k),
\end{equation}
\begin{equation}\label{3.4}
\frac{1}{2}\frac{d}{dt}\|\omega_k\|_{L^2}^2+\|\Lambda \omega_k\|_{L^2}^2-(\Lambda a_k,\omega_k)
-\gamma(\Lambda\theta_k, \omega_k)=(\Lambda^{-1}{\rm div}{\bf g}_k, \omega_k),
\end{equation}
\begin{equation}\label{3.4000}
\frac{1}{2}\frac{d}{dt}\|{\bf\Omega}_k\|_{L^2}^2+\frac{\mu}{\nu}\|\Lambda {\bf\Omega}_k\|_{L^2}^2=(\Lambda^{-1}{\rm curl}{\bf g}_k, {\bf\Omega}_k),
\end{equation}
\begin{equation}\label{3.5}
\frac{1}{2}\frac{d}{dt}\|\theta_k\|_{L^2}^2+\beta\|\Lambda \theta_k\|_{L^2}^2
+\gamma(\Lambda\omega_k, \theta_k)=(m_k, \theta_k).
\end{equation}
Notice that
$$
(\Lambda \omega_k, a_k)=(\Lambda a_k, \omega_k)\,\,\,{\rm and}\,\,\,(\Lambda \theta_k, \omega_k)=(\Lambda \omega_k, \theta_k).
$$
Combing \eqref{3.3}, \eqref{3.4}, \eqref{3.4000} and \eqref{3.5} yields
\begin{equation}\label{3.7}
\begin{aligned}
&\quad\frac{1}{2}\frac{d}{dt}\left(\|a_k\|_{L^2}^2+\|\omega_k\|_{L^2}^2+\|{\bf\Omega}_k\|_{L^2}^2+\|\theta_k\|_{L^2}^2\right)
+\|\Lambda\omega_k\|_{L^2}^2
+\frac{\mu}{\nu}\|\Lambda {\bf\Omega}_k\|_{L^2}^2+\beta\|\Lambda\theta_k\|_
{L^2}^2\\[1ex]
&=(f_k, a_k)+(\Lambda^{-1}{\rm div}{\bf g}_k, \omega_k)+(\Lambda^{-1}{\rm curl}{\bf g}_k, {\bf\Omega}_k)+(m_k, \theta_k).
\end{aligned}
\end{equation}

Taking the $L^2$ scalar product of \eqref{3.2}$_1$ with $\Lambda\omega_k$, \eqref{3.2}$_2$ with $\Lambda a_k$, and \eqref{3.2}$_1$ with $\Lambda^2 a_k$, we obtain, respectively, that
\begin{equation}\nonumber
\left.
\begin{array}{ll}
(\partial_t a_k,\Lambda\omega_k)+\|\Lambda\omega_k\|_{L^2}^2=(f_k,\Lambda\omega_k),\\[1ex]
(\partial_t\omega_k, \Lambda a_k)+(\Lambda^2\omega_k, \Lambda a_k)-\|\Lambda a_k\|_{L^2}^2-\gamma(\Lambda\theta_k,\Lambda a_k)=(\Lambda^{-1}{\rm div}{\bf g}_k,\Lambda a_k),\\[1ex]\displaystyle
\frac{1}{2}\frac{d}{dt}\|\Lambda a_k\|_{L^2}^2+(\Lambda \omega_k, \Lambda^2a_k)=(\Lambda f_k, \Lambda a_k)
\end{array}
\right.
\end{equation}
which yields
\begin{equation}\label{3.8}
\begin{split}
&\quad\frac{1}{2}\frac{d}{dt}\left(\|\Lambda a_k\|_{L^2}^2-2(a_k,\Lambda\omega_k)\right)+\|\Lambda a_k\|_
{L^2}^2-\|\Lambda\omega_k\|_{L^2}^2+\gamma(\Lambda\theta_k,\Lambda a_k)\\[1ex]
&=(\Lambda f_k, \Lambda a_k)-(f_k, \Lambda \omega_k)-(\Lambda^{-1}{\rm div}{\bf g}_k, \Lambda a_k).
\end{split}
\end{equation}
Set
$$
\mathcal{J}_k^2(t)\equ \|a_k\|_{L^2}^2+\|\omega_k\|_{L^2}^2+\|{\bf \Omega}_k\|_{L^2}^2
+\|\theta_k\|_{L^2}^2+\xi\left(\|\Lambda a_k\|_{L^2}^2-2(a_k, \Lambda \omega_k)\right)
$$
for some $\xi>0$, we get from \eqref{3.7} and \eqref{3.8} that
\begin{equation}\label{3.105}
\begin{split}
&\quad\frac{1}{2}\frac{d}{dt}\mathcal{J}_k^2(t)+(1-\xi)\|\Lambda\omega_k\|_{L^2}^2
+\frac{\mu}{\nu}\|\Lambda{\bf \Omega}_k\|_{L^2}^2+\beta\|\Lambda\theta_k\|_{L^2}^2
+\xi\left(\|\Lambda a_k\|_{L^2}^2+\gamma(\Lambda\theta_k,\Lambda a_k)\right)\\[1ex]
&=(f_k, a_k)+(\Lambda^{-1}{\rm div}{\bf g}_k, \omega_k)+(\Lambda^{-1}{\rm curl}{\bf g}_k, {\bf\Omega}_k)+(m_k, \theta_k)\\[1ex]
&\quad\quad\quad\quad\quad\quad\quad\quad\quad\quad\quad\quad\quad\quad\quad\quad+
\xi\Big[(\Lambda f_k, \Lambda a_k)-(f_k, \Lambda \omega_k)-(\Lambda^{-1}{\rm div}{\bf g}_k, \Lambda a_k)\Big].
\end{split}
\end{equation}
Thus thanks to Young's inequality, one has that
\begin{equation}\label{3.103}
\mathcal{J}_k^2(t)\thickapprox\|(a_k, \Lambda a_k, \omega_k, {\bf \Omega}_k, \theta_k)\|_{L^2}^2
\thickapprox\|(a_k, \omega_k, {\bf \Omega}_k, \theta_k)\|_{L^2}^2
\end{equation}
for $k\leq k_0$. As a consequence,  from \eqref{3.105}, we get  in the low-frequency that
\begin{equation}\label{3.9}
\frac{1}{2}\frac{d}{dt}\mathcal{J}_k^2+2^{2k}\mathcal{J}_k^2\lesssim\|(f_k, \Lambda^{-1}{\rm div}{\bf g}_k, \Lambda^{-1}{\rm curl}{\bf g}_k, m_k)\|_{L^2}\mathcal{J}_k,
\end{equation}
which gives
\begin{equation}\label{3.10}
\frac{d}{dt}\mathcal{J}_k+2^{2k}\mathcal{J}_k\lesssim\|(f_k, \Lambda^{-1}{\rm div}{\bf g}_k, \Lambda^{-1}{\rm curl}{\bf g}_k, m_k)\|_{L^2}
\end{equation}
for $k\leq k_0$. Therefore, multiplying both sides of \eqref{3.10} by $2^{k(N/2-1)}$ and summing up on $k\leq k_0$, we finally get \eqref{3.100}.
\end{proof}
\subsection{High-frequency estimates} In the high-frequency regime, the term ${\rm div}(a\bv)$ would cause a loss of one derivative as there is no smoothing effect for $a$. To get around  this difficulty, as in \cite{haspot2011existence}, we introduce the effective velocity
\begin{equation}\label{3.102}
\bw\equ \nabla (-\Delta)^{-1}(a-{\rm div}\bv).
\end{equation}
\begin{lemma}\label{lem3}
Let $k_0$ be chosen suitably large. Then the solution $(a, \bv,\theta)$ to system \eqref{2.2} fulfills
\begin{equation}\label{3.11}
\begin{split}
&\quad\frac{d}{dt}\Big(\|(\nabla a,\bv)\|_{\dot{B}_{p,1}^{\frac{N}{p}-1}}^h+\|\theta\|_{\dot{B}_{p,1}^{\frac{N}{p}-2}}^h\Big)+\Big(\|(a, \theta)\|_{\dot{B}_{p,1}^{\frac{N}{p}}}^h
+\|\bv\|_{\dot{B}_{p,1}^{\frac{N}{p}+1}}^h\Big)\\[1ex]
&\lesssim \|(f,m)\|_{\dot{B}_{p,1}^{\frac{N}{p}-2}}^h+\|{\bf g}\|_{\dot{B}_{p,1}^{\frac{N}{p}-1}}^h
+\|\nabla\bv\|_{\dot{B}_{p,1}^{\frac{N}{p}}}\|a\|_{\dot{B}_{p,1}^{\frac{N}{p}}}
\end{split}
\end{equation}
for all $t\geq 0$, where
$\|z\|_{\dot{B}_{2,1}^s}^h\equ \sum_{k\geq k_0+1}2^{ks}\|\dot{\Delta}_kz\|_{L^2}\,\,\,{\rm for}\,\,\,s\in\mathbb{R}$.
\end{lemma}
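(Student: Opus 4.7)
The plan is to introduce the effective velocity $\bw$ from \eqref{3.102} in order to replace the hyperbolic density equation by a damped transport equation and to decouple $\bv$ from $a$ at high frequencies, and then to combine $L^p$--Besov energy estimates for $(\bv,\theta,\bw,a)$ into a Lyapunov-type inequality through a hierarchy of weights. First, using the identity ${\rm div}\,\widetilde{\mathcal{A}}\bv = \Delta\,{\rm div}\,\bv$ (which follows from $\mu+(\lambda+\mu)=\nu$) and differentiating $\bw=\nabla(-\Delta)^{-1}(a-{\rm div}\,\bv)$ in time along \eqref{2.2}, I would derive
\begin{equation*}
\partial_t \bw-\Delta \bw=\mathcal{Q}\bv-\gamma\nabla\theta+\mathcal{Q}{\bf g}+\nabla(-\Delta)^{-1}f,
\end{equation*}
where $\mathcal{Q}=-\nabla(-\Delta)^{-1}{\rm div}$ denotes the gradient Helmholtz projector. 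The crucial structural point is that $f$ enters only through $\nabla(-\Delta)^{-1}f$, whose $\dot{B}^{N/p-1}_{p,1}$ norm is bounded by $\|f\|^h_{\dot{B}^{N/p-2}_{p,1}}$, which is exactly the regularity on the right-hand side of \eqref{3.11}. Using ${\rm div}\,\bw=-a+{\rm div}\,\bv$ together with the convective form of continuity $\partial_t a+\bv\cdot\nabla a+(1+a){\rm div}\,\bv=0$, I would simultaneously obtain the damped transport equation
\begin{equation*}
\partial_t a+\bv\cdot\nabla a+a=-{\rm div}\,\bw-a\,{\rm div}\,\bv.
\end{equation*}

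Next, I would localize each of the four equations by $\dot{\Delta}_j$ and carry out standard $L^p$--energy estimates. For $\bv$, $\theta$ and $\bw$ (heat-type), Proposition \ref{pr2.3} extracts the parabolic dissipation at frequency $2^{2j}$ in $L^p$; for $a$, the transport term integrates by parts to $-\tfrac{1}{p}\int{\rm div}\,\bv\,|\dot{\Delta}_j a|^p$ (harmless under the a priori smallness of ${\rm div}\,\bv$ afforded by $\mathcal{X}_p$), the commutator $[\dot{\Delta}_j,\bv\cdot\nabla]a$ is controlled by a commutator estimate in the spirit of Proposition \ref{pra.5}, and $a\,{\rm div}\,\bv$ by the algebra property of $\dot{B}^{N/p}_{p,1}$. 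Summing over $j\geq k_0+1$ with the appropriate weights yields
\begin{align*}
&\tfrac{d}{dt}\|\bv\|^h_{\dot{B}^{N/p-1}_{p,1}}+\|\bv\|^h_{\dot{B}^{N/p+1}_{p,1}}\lesssim \|a\|^h_{\dot{B}^{N/p}_{p,1}}+\|\theta\|^h_{\dot{B}^{N/p}_{p,1}}+\|{\bf g}\|^h_{\dot{B}^{N/p-1}_{p,1}},\\
&\tfrac{d}{dt}\|\theta\|^h_{\dot{B}^{N/p-2}_{p,1}}+\|\theta\|^h_{\dot{B}^{N/p}_{p,1}}\lesssim \|\bv\|^h_{\dot{B}^{N/p-1}_{p,1}}+\|m\|^h_{\dot{B}^{N/p-2}_{p,1}},\\
&\tfrac{d}{dt}\|\bw\|^h_{\dot{B}^{N/p-1}_{p,1}}+\|\bw\|^h_{\dot{B}^{N/p+1}_{p,1}}\lesssim \|\bv\|^h_{\dot{B}^{N/p-1}_{p,1}}+\|\theta\|^h_{\dot{B}^{N/p}_{p,1}}+\|{\bf g}\|^h_{\dot{B}^{N/p-1}_{p,1}}+\|f\|^h_{\dot{B}^{N/p-2}_{p,1}},\\
&\tfrac{d}{dt}\|a\|^h_{\dot{B}^{N/p}_{p,1}}+\|a\|^h_{\dot{B}^{N/p}_{p,1}}\lesssim \|\bw\|^h_{\dot{B}^{N/p+1}_{p,1}}+\|\nabla\bv\|_{\dot{B}^{N/p}_{p,1}}\|a\|_{\dot{B}^{N/p}_{p,1}}.
\end{align*}

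To close, I would form the combination $\alpha_1[\bv]+\alpha_2[a]+\alpha_3[\theta]+\alpha_4[\bw]$ with a hierarchy $\alpha_4\gg\alpha_2\gtrsim 1$ (so that $\alpha_2\|\bw\|^h_{\dot{B}^{N/p+1}_{p,1}}$ is swallowed by $\alpha_4\|\bw\|^h_{\dot{B}^{N/p+1}_{p,1}}$ and $\alpha_1\|a\|^h_{\dot{B}^{N/p}_{p,1}}$ by the $a$--damping) and $\alpha_3\gg\alpha_1+\alpha_4$ (so that the $\|\theta\|^h_{\dot{B}^{N/p}_{p,1}}$ cross-terms are absorbed). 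The residual $(\alpha_3+\alpha_4)\|\bv\|^h_{\dot{B}^{N/p-1}_{p,1}}$ appearing on the right-hand sides is then soaked up by $\alpha_1\|\bv\|^h_{\dot{B}^{N/p+1}_{p,1}}$ upon choosing $k_0$ so large that $2^{-2(k_0+1)}$ beats the accumulated constants. A direct Bernstein computation gives $\|\bw\|^h_{\dot{B}^{N/p-1}_{p,1}}\lesssim \|\bv\|^h_{\dot{B}^{N/p-1}_{p,1}}+\|a\|^h_{\dot{B}^{N/p}_{p,1}}$, so the resulting Lyapunov functional is equivalent to $\|(\nabla a,\bv)\|^h_{\dot{B}^{N/p-1}_{p,1}}+\|\theta\|^h_{\dot{B}^{N/p-2}_{p,1}}$, and \eqref{3.11} follows. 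The main technical difficulty I anticipate is the simultaneous balancing of these four weights without invoking any smallness assumption beyond what $\mathcal{X}_p$ already provides; the use of $\bw$ rather than the compressible part $\mathcal{Q}\bv$ of the velocity is essential precisely because it downgrades by one derivative the regularity at which $f$ enters the system, which is what allows $\|f\|^h_{\dot{B}^{N/p-2}_{p,1}}$ (and not $\|f\|^h_{\dot{B}^{N/p-1}_{p,1}}$) to appear on the right-hand side of \eqref{3.11}.
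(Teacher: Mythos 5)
Your proposal follows essentially the same route as the paper: Haspot's effective velocity $\bw=\nabla(-\Delta)^{-1}(a-{\rm div}\bv)$ to turn the density equation into a damped transport equation, $L^p$ energy estimates via Proposition \ref{pr2.3} for the parabolic components and the commutator estimate of Proposition \ref{pra.5} for $a$, and a weighted combination closed by taking $k_0$ large. Your equation for $\bw$ and your damped transport equation for $a$ agree with the paper's system \eqref{3.12}, and your hierarchy of weights, though ordered differently from the paper's ($\eta\ll\alpha\ll 1$ there), closes for the same reasons.

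The one step you should repair is the energy estimate for the full velocity: you apply Proposition \ref{pr2.3} to the equation $\partial_t\bv-\widetilde{\mathcal{A}}\bv=-\nabla a-\gamma\nabla\theta+{\bf g}$, but that proposition concerns the Laplacian, and for $p\neq 2$ the contribution of the $\nabla{\rm div}$ part of the Lam\'e operator against $|\bv|^{p-2}\bv$ is not sign-definite, so the dissipation $2^{2j}\|\bv_j\|_{L^p}^p$ does not follow directly. The paper avoids this entirely by never estimating $\bv$ itself: it estimates $\mathcal{P}\bv$ (a genuine heat equation with coefficient $\mu/\nu$), $\bw$ and $\Lambda^{-1}\theta$, and only at the end recovers $\bv=\mathcal{P}\bv+\bw-\nabla(-\Delta)^{-1}a$. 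Within your own scheme the fix is immediate, since $\mathcal{Q}\bv=\bw-\nabla(-\Delta)^{-1}a$ is already controlled by your $\bw$ and $a$ estimates; you only need to add the (trivial) heat estimate for $\mathcal{P}\bv$ and replace your first displayed inequality accordingly. With that substitution the rest of your argument, including the absorption of $(\alpha_3+\alpha_4)\|\bv\|^h_{\dot{B}^{N/p-1}_{p,1}}$ by the dissipation for $k_0$ large and the equivalence of the Lyapunov functional with $\|(\nabla a,\bv)\|^h_{\dot{B}^{N/p-1}_{p,1}}+\|\theta\|^h_{\dot{B}^{N/p-2}_{p,1}}$, goes through as written.
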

\begin{proof}
Let $\mathcal{P}\equ {\rm Id}+\nabla(-\Delta)^{-1}{\rm div}$ be the Leray projector onto divergence-free vector fields, and
$\bw$ be defined in \eqref{3.102}. Then from system \eqref{2.2}, we get that $\mathcal{P}\bv, \Lambda^{-1}\theta$ and $\bw$ fulfill the heat equation, respectively, and
$a$ satisfies a damped transport equation as follows.
\begin{equation}\label{3.12}
\left\{
\begin{array}{ll}
\partial_t\mathcal{P}\bv-\mu\nu^{-1}\Delta\mathcal{P}\bv=\mathcal{P}{\bf g},\\[1ex]
\partial_t\Lambda^{-1}\theta-\beta\Lambda^{-1}\Delta\theta=\Lambda^{-1}{m}-\gamma\Lambda^{-1}{\rm div}\bw-\gamma \Lambda^{-1}a,\\[1ex]
\partial_t\bw-\Delta\bw=\nabla(-\Delta)^{-1}(f-{\rm div}{\bf g})+\bw-(-\Delta)^{-1}\nabla a-\gamma\nabla\theta,\\[1ex]
\partial_t a+a=f-{\rm div}\bw.
\end{array}
\right.
\end{equation}
Applying $\dot{\Delta}_k$ to \eqref{3.12}$_1$ yields for all $k\in\mathbb{Z}$,
$$
\partial_t\mathcal{P}\bv_k-\mu\nu^{-1}\Delta\mathcal{P}\bv_k=\mathcal{P}{\bf g}_k.
$$
Then, multiplying each component of the above equation by $|(\mathcal{P}v_k)^i|^{p-2}(\mathcal{P}v_k)^i$ and integrating
over $\mathbb{R}^N$ gives for $i=1,2,\cdots,N$,
\be\nonumber
\begin{split}
&\quad\frac{1}{p}\frac{d}{dt}\|\mathcal{P}v_k^i\|_{L^p}^p-\mu\nu^{-1}
\int_{\mathbb{R}^N}\Delta(\mathcal{P}v_k)^i|(\mathcal{P}v_k)^i|^{p-2}(\mathcal{P}v_k)^idx
=\int_{\mathbb{R}^N}|(\mathcal{P}v_k)^i|^{p-2}(\mathcal{P}v_k)^i\mathcal{P}g_k^idx.
\end{split}
\ee
Applying Proposition \ref{pr2.3} and summing on $i=1,2,\cdots,N$, we get for some constant $c_p$ depending only on $p$ that
\be\nonumber
\frac{1}{p}\frac{d}{dt}\|\mathcal{P}\bv_k\|_{L^p}^p+c_p\mu\nu^{-1}2^{2k}\|\mathcal{P}\bv_k\|_{L^p}^p\leq
\|\mathcal{P}{\bf g}_k\|_{L^p}\|\mathcal{P}\bu_k\|_{L^p}^{p-1}
\ee
which leads to
\be\label{3.13}
\frac{d}{dt}\|\mathcal{P}\bv_k\|_{L^p}+c_p\mu\nu^{-1}2^{2k}\|\mathcal{P}\bv_k\|_{L^p}\leq
\|\mathcal{P}{\bf g}_k\|_{L^p}.
\ee
On the other hand, from \eqref{3.12}$_2$ and \eqref{3.12}$_3$, we argue exactly as for proving \eqref{3.13} and obtain that
\be\label{3.14}
\frac{d}{dt}\|\Lambda^{-1}\theta_k\|_{L^p}+c_p\beta 2^{2k}\|\Lambda^{-1}\theta_k\|_{L^p}\leq
\|\Lambda^{-1}{m}_k\|_{L^p}+C\|\bw_k\|_{L^p}+C2^{-2k}\|\nabla a_k\|_{L^p}
\ee
and
\be\label{3.15}
\frac{d}{dt}\|\bw_k\|_{L^p}+c_p2^{2k}\|\bw_k\|_{L^p}\leq
C2^{-k}\|{f}_k\|_{L^p}+\|({\bf g}_k,\bw_k)\|_{L^p}+C2^{2k}\|\Lambda^{-1}\theta_k\|_{L^p}+C2^{-2k}\|\nabla a_k\|_{L^p}.
\ee
Since the function $a$ fulfills the damped transport equation \eqref{3.12}$_4$, then performing the operator $\partial_i\dot{\Delta}_k$ to \eqref{3.12}$_4$ and denoting $R_k^i\equ [\bv\cdot\nabla, \partial_i\dot{\Delta}_k]a$, one has
\be\label{3.16}
\partial_t\partial_ia_k+\bv\cdot\nabla\partial_ia_k+\partial_ia_k=-\partial_i\dot{\Delta}_k(a{\rm div}\bv)
-\partial_i{\rm div}\bw_k+R_k^i,\,\,\,i=1,2,\cdots,N.
\ee
Multiplying both sides of \eqref{3.16} by $|\partial_ia_k|^{p-2}\partial_ia_k$, integrating on $\mathbb{R}^N$, and performing an integration by parts in the second term, we arrive at
\be\nonumber
\begin{split}
\frac{1}{p}\frac{d}{dt}\|\partial_ia_k\|_{L^p}^p+\|\partial_ia_k\|_{L^p}^p=\frac{1}{p}&\int_{\mathbb{R}^N}
{\rm div}\bv|\partial_ia_k|^pdx\\[1ex]
&+\int_{\mathbb{R}^N}(R_k^i-\partial_i\dot{\Delta}_k(a{\rm div}\bv)
-\partial_i{\rm div}\bw_k)|\partial_ia_k|^{p-2}\partial_ia_kdx.
\end{split}
\ee
Summing up on $i=1,2,\cdots,N$ and applying H\"{o}lder and Bernstein inequalities imply
\be\label{3.17}
\begin{split}
\frac{1}{p}\frac{d}{dt}\|\nabla a_k\|_{L^p}^p+\|\nabla a_k\|_{L^p}^p\leq &\Big(\frac{1}{p}\|{\rm div}\bv\|_{L^\infty}
\|\nabla a_k\|_{L^p}+\|\nabla\dot{\Delta}_k(a{\rm div}\bv)\|_{L^p}\\[1ex]
&\quad\quad\quad\quad\quad\quad+C2^{2k}\|\bw_k\|_{L^p}+\|R_k\|_{L^p}
\Big)\|\nabla a_k\|_{L^p}^{p-1},
\end{split}
\ee
which leads to
\be\label{3.18}
\begin{split}
&\quad\frac{1}{p}\frac{d}{dt}\|\nabla a_k\|_{L^p}+\|\nabla a_k\|_{L^p}\\[1ex]
&\leq \frac{1}{p}\|{\rm div}\bv\|_{L^\infty}
\|\nabla a_k\|_{L^p}+\|\nabla\dot{\Delta}_k(a{\rm div}\bv)\|_{L^p}
+C2^{2k}\|\bw_k\|_{L^p}+\|R_k\|_{L^p}.
\end{split}
\ee
Adding \eqref{3.18} (multiplying by $\eta c_p$ for some $\eta>0$), \eqref{3.13}, \eqref{3.14}  and \eqref{3.15} (multiplying by $\alpha$ for some $\alpha>0$) together gives
\be\nonumber
\begin{split}
  &\quad\frac{d}{dt}\left(\|(\mathcal{P}\bv_k, \alpha\bw_k, \Lambda^{-1}\theta_k)\|_{L^p}+\eta c_p\|\nabla a_k\|_{L^p}\right)\\[1ex]
  &\quad\quad\quad\quad\quad\quad\quad\quad\quad
  +c_p2^{2k}(\mu\nu^{-1}\|\mathcal{P}\bv_k\|_{L^p}+\|(\alpha\bw_k, \beta\Lambda^{-1}\theta_k)\|_{L^p})+\eta c_p\|\nabla a_k\|_{L^p}\\[1ex]
  &\leq\|\mathcal{P}{\bf g}_k\|_{L^p}+\|\Lambda^{-1}{m}_k\|_{L^p}+C\|\bw_k\|_{L^p}+C2^{-2k}\|\nabla a_k\|_{L^p}\\[1ex]
  &\quad+\eta c_p\left(\frac{1}{p}\|{\rm div}\bv\|_{L^\infty}
\|\nabla a_k\|_{L^p}+\|\nabla\dot{\Delta}_k(a{\rm div}\bv)\|_{L^p}
+C2^{2k}\|\bw_k\|_{L^p}+\|R_k\|_{L^p}\right)\\[1ex]
  &\quad+C\alpha\Big(2^{-k}\|{f}_k\|_{L^p}+\|({\bf g}_k,\bw_k)\|_{L^p}+C2^{-2k}\|\nabla a_k\|_{L^p}+C2^{2k}\|\Lambda^{-1}\theta_k\|_{L^p}\Big).
\end{split}
\ee
Choosing $k_0$ suitably large and $\eta$ and $\alpha$ sufficiently small, we deduce that there exists a constant $c_0>0$ such that
for all $k\geq k_0+1$,
\be\nonumber
\begin{split}
  &\quad\frac{d}{dt}\|(\mathcal{P}\bv_k, \bw_k, \Lambda^{-1}\theta_k,\nabla a_k)\|_{L^p}+c_0\left(2^{2k}\|(\mathcal{P}\bv_k,\bw_k,\Lambda^{-1}\theta_k)\|_{L^p}
  +\|\nabla a_k\|_{L^p}\right)\\[1ex]
  &\lesssim 2^{-k}\|f_k\|_{L^p}+\|(\Lambda^{-1}m_k, {\bf g}_k)\|_{L^p}+\|{\rm div}\bv\|_{L^\infty}
\|\nabla a_k\|_{L^p}+\|\nabla\dot{\Delta}_k(a{\rm div}\bv)\|_{L^p}
+\|R_k\|_{L^p}.
\end{split}
\ee
Since
$$
\bv=\bw-\nabla (-\Delta)^{-1}a+\mathcal{P}\bv,
$$
it holds that
\be\nonumber
\begin{split}
  &\quad\frac{d}{dt}\|(\nabla a_k, \bv_k, \Lambda^{-1}\theta_k)\|_{L^p}+c_0\|(\nabla a_k, 2^{2k}\bv_k, 2^{2k}\Lambda^{-1}\theta_k)\|_{L^p}\\[1ex]
  &\lesssim \|(2^{-k}f_k, 2^{-k}m_k, {\bf g}_k)\|_{L^p}+\|{\rm div}\bv\|_{L^\infty}
\|\nabla a_k\|_{L^p}+\|\nabla\dot{\Delta}_k(a{\rm div}\bv)\|_{L^p}
+\|R_k\|_{L^p}.
\end{split}
\ee
Thus, multiplying by $2^{k(\frac{N}{p}-1)}$, summing up over $k\geq k_0+1$ and using Corollary \ref{co2.1}
and Proposition \ref{pra.5}, we conclude \eqref{3.11}.
\end{proof}

\section{Estimation of $L^2$-type Besov norms at low frequencies}\label{s:5}
This section is devoted to bounding the $L^2$-type Besov norms at low frequencies, which is the main ingredient in the proof of Theorem \ref{th2}.
\begin{lemma}\label{le3}
Let $2-\frac{N}{2}<\sigma_1\leq \frac{2N}{p}-\frac{N}{2}$ and $p$ satisfy \eqref{1.5}. Then the solution $(a, \bv,\theta)$ to system \eqref{2.2} satisfies
\begin{equation}\label{4.11}
\begin{split}
&\Big(\|(a, \bv,\theta)(t)\|_{\dot{B}_{2,\infty}^{-\sigma_1}}^\ell\Big)^2\lesssim
\Big(\|(a_0, \bv_0,\theta_0)\|_{\dot{B}_{2,\infty}^{-\sigma_1}}^\ell\Big)^2\\[1ex]
&\quad\quad\quad\quad\quad+\int_0^tA_1(\tau)\Big(\|(a, \bv,\theta)(\tau)\|_{\dot{B}_{2,\infty}^{-\sigma_1}}^\ell\Big)^2 d\tau
+\int_0^tA_2(\tau)\|(a, \bv,\theta)(\tau)\|_{\dot{B}_{2,\infty}^{-\sigma_1}}^\ell d\tau,
\end{split}
\end{equation}
where
{\begin{equation}\nonumber
\begin{split}
A_1(t)&\equ\|(a,\bv,\theta)\|_{\dot{B}_{2,1}^{\frac{N}{2}+1}}^\ell+\|a\|_{\dot{B}_{p,1}^{\frac{N}{p}}}^h
+\|\bv\|_{\dot{B}_{p,1}^{\frac{N}{p}+1}}^h+\|a\|_{\dot{B}_{p,1}^{\frac{N}{p}}}^2+\|\theta\|_{\dot{B}_{p,1}^{\frac{N}{p}}}^h\\[1ex]
&\quad+\|a\|_{\dot{B}_{p,1}^{\frac{N}{p}}}
\Big(\|\bv\|_{\dot{B}_{2,1}^{\frac{N}{2}+1}}^\ell+\|\bv\|_{\dot{B}_{p,1}^{\frac{N}{p}+1}}^h
+\|\theta\|_{\dot{B}_{p,1}^{\frac{N}{p}}}^h\Big)+\|a\|_{\dot{B}_{p,1}^{\frac{N}{p}}}
\Big(\|\theta\|_{\dot{B}_{2,1}^{\frac{N}{2}}}^\ell+\|\bv\|_{\dot{B}_{p,1}^{\frac{N}{p}}}^h\Big)
\end{split}
\end{equation}}
and
{\begin{equation}\nonumber
\begin{split}
A_2(t)&\equ\Big(\|(a,\bv)\|_{\dot{B}_{p,1}^{\frac{N}{p}}}^h\Big)^2
+\|\bv\|_{\dot{B}_{p,1}^{\frac{N}{p}+1}}^h\|a\|_{\dot{B}_{p,1}^{\frac{N}{p}}}\|a\|_{\dot{B}_{p,1}^{\frac{N}{p}}}^h
+\|a\|_{\dot{B}_{p,1}^{\frac{N}{p}}}^2\|a\|_{\dot{B}_{p,1}^{\frac{N}{p}}}^h
+\|a\|_{\dot{B}_{p,1}^{\frac{N}{p}}}^h\|\bv\|_{\dot{B}_{p,1}^{\frac{N}{p}+1}}^h\\[1ex]
&\quad+\|\theta\|_{\dot{B}_{p,1}^{\frac{N}{p}}}^h\|a\|_{\dot{B}_{p,1}^{\frac{N}{p}}}^h
+\|\theta\|_{\dot{B}_{p,1}^{\frac{N}{p}}}^h\|a\|_{\dot{B}_{p,1}^{\frac{N}{p}}}^2+\|\theta\|_{\dot{B}_{p,1}^{\frac{N}{p}}}^h
\|\bv\|_{\dot{B}_{p,1}^{\frac{N}{p}-1}}^h\\[1ex]
&\quad+\Big(\|\bv\|_{\dot{B}_{p,1}^{\frac{N}{p}}}^h\Big)^2\|a\|_{\dot{B}_{p,1}^{\frac{N}{p}}}+\|\theta\|_{\dot{B}_{p,1}^{\frac{N}{p}-2}}^h\|\bv\|_{\dot{B}_{p,1}^{\frac{N}{p}+1}}^h
+\|\bv\|_{\dot{B}_{p,1}^{\frac{N}{p}}}^h\|\theta\|_{\dot{B}_{p,1}^{\frac{N}{p}-1}}^h\|a\|_{\dot{B}_{p,1}^{\frac{N}{p}}}.
\end{split}
\end{equation}}
\end{lemma}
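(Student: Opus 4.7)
The plan is to rerun the frequency-localized $L^2$ energy argument of Lemma \ref{lem2} at the negative regularity level $-\sigma_1$ with the $\ell^\infty$-in-frequency summability of $\dot{B}^{-\sigma_1}_{2,\infty}$, and then to control the resulting nonlinear forcings in $\dot{B}^{-\sigma_1}_{2,\infty}$ at low frequencies by the non-classical product estimates \eqref{4.2} and \eqref{4.2000} of Proposition \ref{pr4.1}, together with the symmetric variant \eqref{4.2002}. The high-frequency norms of the solution from the energy functional \eqref{1.6} will enter only as coefficients inside the bookkeeping functions $A_1,A_2$.

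\emph{Step 1: localized energy estimate.} I would recast \eqref{2.2} as the system \eqref{3.1} using $\omega=\Lambda^{-1}\mathrm{div}\,\bv$ and $\mathbf\Omega=\Lambda^{-1}\mathrm{curl}\,\bv$, apply $\dot\Delta_k$, and reproduce the Lyapunov functional $\mathcal J_k$ of \eqref{3.105}, which is equivalent to $\|(a_k,\bv_k,\theta_k)\|_{L^2}$ at low frequencies by \eqref{3.103}. The identical scalar-product manipulations that produced \eqref{3.10} then give, for $k\leq k_0$,
\[
\frac{d}{dt}\mathcal J_k+2^{2k}\mathcal J_k\lesssim \|(f_k,\Lambda^{-1}\mathrm{div}\,\mathbf g_k,\Lambda^{-1}\mathrm{curl}\,\mathbf g_k,m_k)\|_{L^2}.
\]
Multiplying by $2^{-\sigma_1 k}\mathcal J_k$, integrating in time, dropping the non-negative dissipation, and taking the supremum over $k\leq k_0$ yields the master inequality
\[
\bigl(\|(a,\bv,\theta)(t)\|^{\ell}_{\dot B^{-\sigma_1}_{2,\infty}}\bigr)^2\lesssim \bigl(\|(a_0,\bv_0,\theta_0)\|^{\ell}_{\dot B^{-\sigma_1}_{2,\infty}}\bigr)^2+\int_0^t\|(f,\mathbf g,m)\|^{\ell}_{\dot B^{-\sigma_1}_{2,\infty}}\,\|(a,\bv,\theta)\|^{\ell}_{\dot B^{-\sigma_1}_{2,\infty}}\,d\tau.
\]

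\emph{Step 2: nonlinear bound.} The body of the proof consists in bounding $\|(f,\mathbf g,m)\|^{\ell}_{\dot B^{-\sigma_1}_{2,\infty}}$ term by term. For products not involving $\theta$ (namely in $f$ and in the pieces $\bv\cdot\nabla\bv$, $I(a)\widetilde{\mathcal A}\bv$, $K_1(a)\nabla a$ of $\mathbf g$) I use \eqref{4.2}. For products involving $\theta$ (i.e.\ $K_2(a)\nabla\theta$, $\theta\nabla K_3(a)$ in $\mathbf g$, and $\bv\cdot\nabla\theta$, $I(a)\Delta\theta$, $K_2(a)\,\mathrm{div}\,\bv$, $K_4(a)\theta\,\mathrm{div}\,\bv$ in $m$) I use \eqref{4.2000}, which is tailored to the two-derivatives-lower regularity of $\theta$. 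Compositions $I(a),K_i(a)$ are treated by Proposition \ref{pra.4}. After each product estimate, the resulting $\dot B^{\bullet}_{p,\infty}$-norm of the ``light'' factor is split at the threshold $k_0$: the low-frequency part is absorbed into $\|\cdot\|^{\ell}_{\dot B^{-\sigma_1}_{2,\infty}}$ via the low-frequency Bernstein inequality, while the high-frequency part is bounded by the $\dot B^{N/p}_{p,1}$-type norms appearing in \eqref{1.6}. Products in which one light factor carries a surviving $\|\cdot\|^{\ell}_{\dot B^{-\sigma_1}_{2,\infty}}$ combine with the right-hand factor of the master inequality to give the quadratic contributions $A_1(\tau)\bigl(\|\cdot\|^{\ell}_{\dot B^{-\sigma_1}_{2,\infty}}\bigr)^2$; products in which both factors land in high-frequency norms give the linear contributions $A_2(\tau)\|\cdot\|^{\ell}_{\dot B^{-\sigma_1}_{2,\infty}}$.

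\emph{Step 3: the symmetric quadratic source, the main obstacle.} The critical term is $Q(\nabla\bv,\nabla\bv)/(1+a)$ in $m$. A direct application of \eqref{4.2000} to $\nabla\bv\cdot\nabla\bv$ would force a bound on $\nabla\bv$ in a Besov space at low frequencies, producing a factor $\|\bv\|^{\ell}_{\dot B^{-\sigma_1}_{2,\infty}}$ and hence a cubic quantity in the master inequality that cannot be absorbed without smallness. The escape, which is the crux of the argument, is to exploit the symmetry of $Q(\nabla\bv,\nabla\bv)$ in $\nabla\bv$. Writing $1/(1+a)=1-I(a)$, the leading symmetric piece $Q(\nabla\bv,\nabla\bv)$ is controlled by the symmetric variant \eqref{4.2002} of Proposition \ref{pr4.1}, which holds \emph{without} the low-frequency restriction, so $\bv$ never enters through its $\dot B^{-\sigma_1}_{2,\infty}$ norm; the correction $-I(a)\,Q(\nabla\bv,\nabla\bv)$ is then handled by \eqref{4.2000}. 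This is precisely what produces the $A_2$-entries $(\|\bv\|^{h}_{\dot B^{N/p}_{p,1}})^2\|a\|_{\dot B^{N/p}_{p,1}}$ and $\|\bv\|^{h}_{\dot B^{N/p}_{p,1}}\|\theta\|^{h}_{\dot B^{N/p-1}_{p,1}}\|a\|_{\dot B^{N/p}_{p,1}}$ in the statement, and it is what ultimately permits dropping the smallness assumption on $\|(a_0,\bv_0,\theta_0)\|^{\ell}_{\dot B^{-\sigma_1}_{2,\infty}}$. Once this piece is under control, all the remaining contributions regroup into exactly the $A_1,A_2$ displayed in the statement, and \eqref{4.11} follows.
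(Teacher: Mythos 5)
Your Steps 1 and 2 coincide with the paper's proof: the same frequency-localized energy functional from Lemma \ref{lem2} gives exactly the master inequality \eqref{4.13}, and the nonlinear terms are then bounded term by term with \eqref{4.1}, \eqref{4.2}, \eqref{4.2000}, the composition estimates and a low/high splitting of the factors. The problem is in your Step 3, which you yourself identify as the crux. Applying the symmetric estimate \eqref{4.2002} to the whole of $Q(\nabla\bv,\nabla\bv)$ (without first splitting $\nabla\bv$ in frequency) yields $\|\nabla\bv\|_{\dot B^{N/p-2}_{p,1}}\,\|\nabla\bv\|_{\dot B^{-\sigma_1+N/p-N/2+2}_{p,\infty}}$. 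The first factor is comparable to $\|\bv\|_{\dot B^{N/p-1}_{p,1}}$, which is only bounded in time, not integrable; and the low-frequency part of the second factor is controlled by $\|\bv\|^{\ell}_{\dot B^{-\sigma_1+3}_{2,\infty}}$, which, since $\sigma_1>2-\frac N2$ forces $-\sigma_1+3<\frac N2+1$, cannot be absorbed by the dissipation norm $\|\bv\|^{\ell}_{\dot B^{N/2+1}_{2,1}}$ and is only reachable through $\|\bv\|^{\ell}_{\dot B^{-\sigma_1}_{2,\infty}}$ — contradicting your claim that $\bv$ ``never enters through its $\dot B^{-\sigma_1}_{2,\infty}$ norm.'' Either way the accompanying coefficient $\|\bv\|_{\dot B^{N/p-1}_{p,1}}$ is not in $L^1_t$, so this contribution fits into neither $A_1$ nor $A_2$, both of which must be integrable for the crucial step \eqref{4.73}--\eqref{4.74}. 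The paper avoids this by splitting $\nabla\bv$ into $\nabla\bv^{\ell}+\nabla\bv^{h}$ before any product estimate: the pieces containing $\nabla\bv^{\ell}$ are treated with \eqref{4.1}, giving $A_1$-entries with time-integrable coefficients $\|\bv\|^{\ell}_{\dot B^{N/2+1}_{2,1}}$ and $\|\bv\|^{h}_{\dot B^{N/p+1}_{p,1}}$ multiplying $\|\bv\|^{\ell}_{\dot B^{-\sigma_1}_{2,\infty}}$ (see \eqref{4.59}--\eqref{4.60}), and the symmetric estimate is invoked only for the high-high piece — and it is \eqref{4.2001}, not \eqref{4.2002}, that is used there (see \eqref{4.61}).

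Your justification for why symmetry is needed is also off. A single factor $\|\bv\|^{\ell}_{\dot B^{-\sigma_1}_{2,\infty}}$ inside the bound of $\|m\|^{\ell}_{\dot B^{-\sigma_1}_{2,\infty}}$ produces a \emph{quadratic} Gronwall term of type $A_1\bigl(\|\cdot\|^{\ell}\bigr)^2$, not a cubic one, and this is harmless precisely when the coefficient is time-integrable — the paper's own estimates \eqref{4.59}--\eqref{4.60} contain such factors, and the removal of the smallness assumption comes from the Gronwall structure of \eqref{4.11}, not from banishing the weak norm of $\bv$. The genuine role of the symmetric variants is different: after peeling off $1/(1+a)$ by \eqref{4.1} in \eqref{4.58}, one needs the \emph{full} (not merely low-frequency) $\dot B^{-\sigma_1}_{2,\infty}$ norm of $|\nabla\bv|^2$, so the high-high piece has to be estimated by a product law without the low-frequency restriction present in \eqref{4.2}, which is exactly what \eqref{4.2001} provides. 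Finally, a minor misattribution: the $A_2$-entry $\|\bv\|^{h}_{\dot B^{N/p}_{p,1}}\|\theta\|^{h}_{\dot B^{N/p-1}_{p,1}}\|a\|_{\dot B^{N/p}_{p,1}}$ comes from the term $\bar K_4(a)\theta\,{\rm div}\bv$ (see \eqref{4.72}), not from $Q(\nabla\bv,\nabla\bv)/(1+a)$.
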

\begin{proof}
Recalling  \eqref{3.103} and \eqref{3.9}, we get for $k\leq k_0$ that
 \begin{equation}\label{low-frequency}
 \begin{split}
 &\quad\frac{1}{2}\frac{d}{dt}\|(a_k, \omega_k, {\bf \Omega}_k, \theta_k)\|_{L^2}^2+\|(\Lambda a_k,\Lambda \omega_k, \Lambda {\bf \Omega}_k, \Lambda \theta_k)\|_{L^2}^2\\[1ex]
 &\lesssim \|(f_k, \Lambda^{-1}{\rm div}{\bf g}_k, \Lambda^{-1}{\rm curl}{\bf g}_k, m_k)\|_{L^2}\|(a_k, \omega_k, {\bf \Omega}_k, \theta_k)\|_{L^2},
 \end{split}
 \end{equation}
which implies that
\begin{equation}\label{4.12}
 \begin{split}
 &\quad\|(a_k, \omega_k, {\bf \Omega}_k, \theta_k)\|_{L^2}^2+\int_0^t\|(\Lambda a_k,\Lambda \omega_k, \Lambda {\bf \Omega}_k, \Lambda \theta_k)\|_{L^2}^2ds\\[1ex]
 &\lesssim \|(a_{0k}, \omega_{0k}, {\bf \Omega}_{0k}, \theta_{0k})\|_{L^2}^2+\int_0^t\|(f_k, \Lambda^{-1}{\rm div}{\bf g}_k, \Lambda^{-1}{\rm curl}{\bf g}_k, m_k)\|_{L^2}\|(a_k, \omega_k, {\bf \Omega}_k, \theta_k)\|_{L^2}ds.
 \end{split}
 \end{equation}
Multiplying $2^{2k(-\sigma_1)}$ on both sides of \eqref{4.12}, taking supremum in terms of $k\leq k_0$ and noticing that \eqref{3.1}$_5$, we have
\begin{equation}\label{4.13}
\begin{split}
&\quad\Big(\|(a, \bv,\theta)(t)\|_{\dot{B}_{2,\infty}^{-\sigma_1}}^\ell\Big)^2\\[1ex]
&\lesssim
\Big(\|(a_0, \bv_0,\theta_0)\|_{\dot{B}_{2,\infty}^{-\sigma_1}}^\ell\Big)^2+\int_0^t\|(f, {\bf g},{ m})(\tau)\|_{\dot{B}_{2,\infty}^{-\sigma_1}}^\ell\|(a, \bv,\theta)(\tau)\|_{\dot{B}_{2,\infty}^{-\sigma_1}}^\ell  d\tau.
\end{split}
\end{equation}

In what follows, we focus on the estimates of nonlinear norm $\|(f, {\bf g}, m)\|_{\dot{B}_{2,\infty}^{-\sigma_1}}^\ell$. Firstly, we deal with the term $f=-{\rm div}(a\bv)=-a{\rm div}\bv-\bv\cdot\nabla a$.

\underline{Estimate of $a{\rm div}\bv$}. Decomposing
$a{\rm div}\bv=a^\ell{\rm div}\bv+a^h{\rm div}\bv^\ell+a^h{\rm div}\bv^h$ and
making use of  \eqref{4.1}, we infer that
\be\label{4.14}
\|a^\ell{\rm div}\bv\|_{\dot{B}_{2,\infty}^{-\sigma_1}}\lesssim \|{\rm div}\bv\|_{\dot{B}_{p,1}^{\frac{N}{p}}}
\|a\|_{\dot{B}_{2,\infty}^{-\sigma_1}}^\ell\lesssim \Big(\|\bv\|_{\dot{B}_{2,1}^{\frac{N}{2}+1}}^\ell+\|\bv\|_{\dot{B}_{p,1}^{\frac{N}{p}+1}}^h\Big)
\|a\|_{\dot{B}_{2,\infty}^{-\sigma_1}}^\ell
\ee
and
\be\label{4.15}
\|a^h{\rm div}\bv^\ell\|_{\dot{B}_{2,\infty}^{-\sigma_1}}\lesssim \|a^h\|_{\dot{B}_{p,1}^{\frac{N}{p}}}
\|{\rm div}\bv^\ell\|_{\dot{B}_{2,\infty}^{-\sigma_1}}\lesssim \|a\|_{\dot{B}_{p,1}^{\frac{N}{p}}}^h
\|\bv\|_{\dot{B}_{2,\infty}^{-\sigma_1}}^\ell.
\ee
By means of \eqref{4.2}, one gets
\be\label{4.16}
\begin{split}
\|a^h{\rm div}\bv^h\|_{\dot{B}_{2,\infty}^{-\sigma_1}}^\ell&\lesssim \|a^h\|_{\dot{B}_{p,1}^{\frac{N}{p}-1}}
\Big(\|{\rm div}\bv^h\|_{\dot{B}_{p,\infty}^{-\sigma_1+\frac{N}{p}-\frac{N}{2}+1}}+\|{\rm div}\bv^h\|_{\dot{B}_{p,\infty}^{-\sigma_1+\frac{2N}{p}-N+1}}\Big)\\[1ex]
&\lesssim\|a\|_{\dot{B}_{p,1}^{\frac{N}{p}}}^h\|\bv\|_{\dot{B}_{p,1}^{\frac{N}{p}+1}}^h,
\end{split}
\ee
where we used that
$
-\sigma_1+\frac{2N}{p}-N+2\leq -\sigma_1+\frac{N}{p}-\frac{N}{2}+2<\frac{N}{p}+1
$
since $\sigma_1>1-\frac{N}{2}$ and $p\geq 2$.

\underline{Estimate of $\bv\cdot\nabla a$}. Decomposing $\bv\cdot\nabla a=\bv^\ell\cdot\nabla a^\ell+\bv^h\cdot\nabla a^\ell+\bv^\ell\cdot\nabla a^h+\bv^h\cdot\nabla a^h$, we deduce from \eqref{4.1} that
\be\label{4.17}
\|\bv^\ell\nabla a^\ell\|_{\dot{B}_{2,\infty}^{-\sigma_1}}\lesssim\|\nabla a^\ell\|_{\dot{B}_{p,1}^{\frac{N}{p}}}
\|\bv^\ell\|_{\dot{B}_{2,\infty}^{-\sigma_1}}\lesssim\|a\|_{\dot{B}_{2,1}^{\frac{N}{2}+1}}^\ell
\|\bv\|_{\dot{B}_{2,\infty}^{-\sigma_1}}^\ell,
\ee
and
\be\label{4.18}
\|\bv^h\nabla a^\ell\|_{\dot{B}_{2,\infty}^{-\sigma_1}}\lesssim\|\bv^h\|_{\dot{B}_{p,1}^{\frac{N}{p}}}
\|\nabla a^\ell\|_{\dot{B}_{2,\infty}^{-\sigma_1}}\lesssim\|\bv\|_{\dot{B}_{p,1}^{\frac{N}{p}+1}}^h
\|a\|_{\dot{B}_{2,\infty}^{-\sigma_1}}^\ell.
\ee
It follows from \eqref{4.2} that
\be\label{4.19}
\begin{split}
\|\bv^\ell\nabla a^h\|_{\dot{B}_{2,\infty}^{-\sigma_1}}^\ell&\lesssim\|\nabla a^h\|_{\dot{B}_{p,1}^{\frac{N}{p}-1}}
\Big(\|\bv^\ell\|_{\dot{B}_{p,\infty}^{-\sigma_1+\frac{N}{p}-\frac{N}{2}+1}}
+\|\bv^\ell\|_{\dot{B}_{p,\infty}^{-\sigma_1+\frac{2N}{p}-N+1}}\Big)\\[1ex]
&\lesssim\|a^h\|_{\dot{B}_{p,1}^{\frac{N}{p}}}\|\bv^\ell\|_{\dot{B}_{p,\infty}^{-\sigma_1+\frac{2N}{p}-N+1}}
\lesssim\|a\|_{\dot{B}_{p,1}^{\frac{N}{p}}}^h\|\bv\|_{\dot{B}_{2,\infty}^{-\sigma_1}}^\ell,
\end{split}
\ee
where we used that $-\sigma_1+\frac{2N}{p}-N+1\leq -\sigma_1+\frac{N}{p}-\frac{N}{2}+1$ in the second inequality
and that ${\dot{B}_{2,\infty}^{-\sigma_1}}
\hookrightarrow{\dot{B}_{p,\infty}^{-\sigma_1+\frac{2N}{p}-N+1}}$ at the low frequency in the last inequality when $2\leq p\leq \frac{2N}{N-2}$. For the term $\bv^h\nabla a^h$, by \eqref{4.2} again, it holds that
\be\label{4.20}
\begin{split}
\|\bv^h\nabla a^h\|_{\dot{B}_{2,\infty}^{-\sigma_1}}^\ell&\lesssim\|\nabla a^h\|_{\dot{B}_{p,1}^{\frac{N}{p}-1}}
\Big(\|\bv^h\|_{\dot{B}_{p,\infty}^{-\sigma_1+\frac{N}{p}-\frac{N}{2}+1}}
+\|\bv^h\|_{\dot{B}_{p,\infty}^{-\sigma_1+\frac{2N}{p}-N+1}}\Big)\\[1ex]
&\lesssim\|a^h\|_{\dot{B}_{p,1}^{\frac{N}{p}}}\|\bv^h\|_{\dot{B}_{p,\infty}^{-\sigma_1+\frac{N}{p}-\frac{N}{2}+1}}
\lesssim\|a\|_{\dot{B}_{p,1}^{\frac{N}{p}}}^h\|\bv\|_{\dot{B}_{p,1}^{\frac{N}{p}+1}}^h,
\end{split}
\ee
where we have used that
$
-\sigma_1+\frac{2N}{p}-N+1\leq -\sigma_1+\frac{N}{p}-\frac{N}{2}+1\leq \frac{N}{p}+1,
$
since $\sigma_1\geq-\frac{N}{2}$ and $p\geq 2$.

Now, we are in a position to estimate $\|{\bf g}\|_{\dot{B}_{2,\infty}^{-\sigma_1}}^\ell$. Let us recall that
\be\nonumber
{\bf g}\equ -\bv\cdot\nabla\bv-I(a)\widetilde{\mathcal{A}}\bv-K_1(a)\nabla a-K_2(a)\nabla\theta-\theta\nabla K_3(a).
\ee

\underline{Estimate of $\bv\cdot\nabla\bv$}. Decompose $\bv\cdot\nabla\bv=\bv^\ell\cdot\nabla\bv^\ell+\bv^\ell\cdot\nabla\bv^h
+\bv^h\cdot\nabla\bv^\ell+\bv^h\cdot\nabla\bv^h$.
It holds from \eqref{4.1} that
\be\label{4.23}
\|\bv^\ell\cdot\nabla \bv^\ell\|_{\dot{B}_{2,\infty}^{-\sigma_1}}\lesssim\|\nabla \bv^\ell\|_{\dot{B}_{p,1}^{\frac{N}{p}}}
\|\bv^\ell\|_{\dot{B}_{2,\infty}^{-\sigma_1}}\lesssim\|\bv\|_{\dot{B}_{2,1}^{\frac{N}{2}+1}}^\ell
\|\bv\|_{\dot{B}_{2,\infty}^{-\sigma_1}}^\ell,
\ee
\be\label{4.24}
\|\bv^h\cdot\nabla \bv^\ell\|_{\dot{B}_{2,\infty}^{-\sigma_1}}\lesssim\|\bv^h\|_{\dot{B}_{p,1}^{\frac{N}{p}}}
\|\nabla\bv^\ell\|_{\dot{B}_{2,\infty}^{-\sigma_1}}\lesssim\|\bv\|_{\dot{B}_{p,1}^{\frac{N}{p}+1}}^h
\|\bv\|_{\dot{B}_{2,\infty}^{-\sigma_1}}^\ell.
\ee
By similar calculations to \eqref{4.19} and \eqref{4.20}, one has by \eqref{4.2} that
\be\label{4.124}
\begin{split}
\|\bv^\ell\cdot\nabla \bv^h\|_{\dot{B}_{2,\infty}^{-\sigma_1}}^\ell&\lesssim\|\nabla \bv^h\|_{\dot{B}_{p,1}^{\frac{N}{p}-1}}
\Big(\|\bv^\ell\|_{\dot{B}_{p,\infty}^{-\sigma_1+\frac{N}{p}-\frac{N}{2}+1}}
+\|\bv^\ell\|_{\dot{B}_{p,\infty}^{-\sigma_1+\frac{2N}{p}-N+1}}\Big)\\[1ex]
&\lesssim\|\bv^h\|_{\dot{B}_{p,1}^{\frac{N}{p}}}\|\bv^\ell\|_{\dot{B}_{p,\infty}^{-\sigma_1+\frac{2N}{p}-N+1}}
\lesssim\|\bv\|_{\dot{B}_{p,1}^{\frac{N}{p}+1}}^h\|\bv\|_{\dot{B}_{2,\infty}^{-\sigma_1}}^\ell,
\end{split}
\ee
and
\be\label{4.25}
\begin{split}
\|\bv^h\cdot\nabla \bv^h\|_{\dot{B}_{2,\infty}^{-\sigma_1}}^\ell&\lesssim\|\nabla \bv^h\|_{\dot{B}_{p,1}^{\frac{N}{p}-1}}
\Big(\|\bv^h\|_{\dot{B}_{p,\infty}^{-\sigma_1+\frac{N}{p}-\frac{N}{2}+1}}
+\|\bv^h\|_{\dot{B}_{p,\infty}^{-\sigma_1+\frac{2N}{p}-N+1}}\Big)\\[1ex]
&\lesssim\|\bv^h\|_{\dot{B}_{p,1}^{\frac{N}{p}}}\|\bv^h\|_{\dot{B}_{p,\infty}^{-\sigma_1+\frac{N}{p}-\frac{N}{2}+1}}
\lesssim\|\bv\|_{\dot{B}_{p,1}^{\frac{N}{p}}}^h\|\bv\|_{\dot{B}_{p,1}^{\frac{N}{p}}}^h.
\end{split}
\ee

\underline{Estimate of $I(a)\widetilde{\mathcal{A}}\bv$}. Keeping in mind that $I(0)=0$, one may write
$$
I(a)=I^\prime(0)a+\bar{I}(a)a
$$
for some smooth function $\bar{I}$ vanishing at $0$. Thus, through \eqref{4.1} again, we have
\be\label{4.26}
\|a^\ell\widetilde{\mathcal{A}}\bv^\ell\|_{\dot{B}_{2,\infty}^{-\sigma_1}}\lesssim\|\widetilde{\mathcal{A}}\bv^\ell
\|_{\dot{B}_{p,1}^{\frac{N}{p}}}
\|a^\ell\|_{\dot{B}_{2,\infty}^{-\sigma_1}}\lesssim\|\bv\|_{\dot{B}_{2,1}^{\frac{N}{2}+1}}^\ell
\|a\|_{\dot{B}_{2,\infty}^{-\sigma_1}}^\ell,
\ee
and
\be\label{4.27}
\|a^h\widetilde{\mathcal{A}}\bv^\ell\|_{\dot{B}_{2,\infty}^{-\sigma_1}}\lesssim\|a^h\|_{\dot{B}_{p,1}^{\frac{N}{p}}}
\|\widetilde{\mathcal{A}}\bv^\ell\|_{\dot{B}_{2,\infty}^{-\sigma_1}}\lesssim\|a\|_{\dot{B}_{p,1}^{\frac{N}{p}}}^h
\|\bv\|_{\dot{B}_{2,\infty}^{-\sigma_1}}^\ell.
\ee
Arguing similarly as \eqref{4.19} and \eqref{4.20}, one has
\be\label{4.28}
\begin{split}
\|a^\ell\widetilde{\mathcal{A}} \bv^h\|_{\dot{B}_{2,\infty}^{-\sigma_1}}^\ell&\lesssim\|\widetilde{\mathcal{A}} \bv^h\|_{\dot{B}_{p,1}^{\frac{N}{p}-1}}
\Big(\|a^\ell\|_{\dot{B}_{p,\infty}^{-\sigma_1+\frac{N}{p}-\frac{N}{2}+1}}
+\|a^\ell\|_{\dot{B}_{p,\infty}^{-\sigma_1+\frac{2N}{p}-N+1}}\Big)\\[1ex]
&\lesssim\|\bv\|_{\dot{B}_{p,1}^{\frac{N}{p}+1}}^h\|a^\ell\|_{\dot{B}_{p,\infty}^{-\sigma_1+\frac{2N}{p}-N+1}}
\lesssim\|\bv\|_{\dot{B}_{p,1}^{\frac{N}{p}+1}}^h\|a\|_{\dot{B}_{2,\infty}^{-\sigma_1}}^\ell,
\end{split}
\ee
and
\be\label{4.29}
\begin{split}
\|a^h\widetilde{\mathcal{A}}\bv^h\|_{\dot{B}_{2,\infty}^{-\sigma_1}}^\ell&\lesssim\|\mathcal{A}\bv^h\|_{\dot{B}_{p,1}^{\frac{N}{p}-1}}
\Big(\|a^h\|_{\dot{B}_{p,\infty}^{-\sigma_1+\frac{N}{p}-\frac{N}{2}+1}}
+\|a^h\|_{\dot{B}_{p,\infty}^{-\sigma_1+\frac{2N}{p}-N+1}}\Big)\\[1ex]
&\lesssim\|\bv^h\|_{\dot{B}_{p,1}^{\frac{N}{p}+1}}\|a^h\|_{\dot{B}_{p,\infty}^{-\sigma_1+\frac{N}{p}-\frac{N}{2}+1}}
\lesssim\|\bv\|_{\dot{B}_{p,1}^{\frac{N}{p}+1}}^h\|a\|_{\dot{B}_{p,1}^{\frac{N}{p}}}^h.
\end{split}
\ee
On the other hand, from \eqref{4.1}, \eqref{4.2}, Proposition \ref{pra.4} and Corollaries \ref{co2.1} and \ref{co2.2}, we have
\be\label{4.30}
\|\bar{I}(a)a\widetilde{\mathcal{A}}\bv^\ell\|_{\dot{B}_{2,\infty}^{-\sigma_1}}
\lesssim\|\bar{I}(a)a\|_{\dot{B}_{p,1}^{\frac{N}{p}}}\|\widetilde{\mathcal{A}}\bv^\ell\|_{\dot{B}_{2,\infty}^{-\sigma_1}}
\lesssim\|a\|_{\dot{B}_{p,1}^{\frac{N}{p}}}^2\|\bv\|_{\dot{B}_{2,\infty}^{-\sigma_1}}^\ell,
\ee
and
\be\label{4.31}
\begin{split}
\|\bar{I}(a)a\widetilde{\mathcal{A}}\bv^h\|_{\dot{B}_{2,\infty}^{-\sigma_1}}^\ell
&\lesssim\|\widetilde{\mathcal{A}}\bv^h\|_{\dot{B}_{p,1}^{\frac{N}{p}-1}}
\Big(\|\bar{I}(a)a\|_{\dot{B}_{p,\infty}^{-\sigma_1+\frac{N}{p}-\frac{N}{2}+1}}
+\|\bar{I}(a)a\|_{\dot{B}_{p,\infty}^{-\sigma_1+\frac{2N}{p}-N+1}}\Big)\\[1ex]
&\lesssim\|\bv^h\|_{\dot{B}_{p,1}^{\frac{N}{p}+1}}\|\bar{I}(a)\|_{\dot{B}_{p,1}^{\frac{N}{p}}}
\Big(\|a\|_{\dot{B}_{p,\infty}^{-\sigma_1+\frac{N}{p}-\frac{N}{2}+1}}
+\|a\|_{\dot{B}_{p,\infty}^{-\sigma_1+\frac{2N}{p}-N+1}}\Big)\\[1ex]
&\lesssim\|\bv\|_{\dot{B}_{p,1}^{\frac{N}{p}+1}}^h
\|a\|_{\dot{B}_{p,1}^{\frac{N}{p}}}\Big(\|a\|_{\dot{B}_{p,\infty}^{-\sigma_1+\frac{N}{p}-\frac{N}{2}+1}}^h
+\|a\|_{\dot{B}_{p,\infty}^{-\sigma_1+\frac{2N}{p}-N+1}}^\ell\Big)\\[1ex]
&\lesssim\|\bv\|_{\dot{B}_{p,1}^{\frac{N}{p}+1}}^h
\|a\|_{\dot{B}_{p,1}^{\frac{N}{p}}}\Big(\|a\|_{\dot{B}_{p,1}^{\frac{N}{p}}}^h
+\|a\|_{\dot{B}_{2,\infty}^{-\sigma_1}}^\ell\Big).
\end{split}
\ee

\underline{Estimate of $K_1(a)\nabla a$}. In view of $K_1(0)=0$, we may write $K_1(a)=K_1^\prime(0)a+\bar{K}_1(a)a$, here $\bar{K}_1$ is a smooth function fulfilling $\bar{K}_1(0)=0$. For the term $a\nabla a$, we obtain
\be\label{4.32}
\|a^\ell\nabla a^\ell\|_{\dot{B}_{2,\infty}^{-\sigma_1}}\lesssim\|\nabla a^\ell\|_{\dot{B}_{p,1}^{\frac{N}{p}}}
\|a^\ell\|_{\dot{B}_{2,\infty}^{-\sigma_1}}\lesssim\|a\|_{\dot{B}_{2,1}^{\frac{N}{2}+1}}^\ell
\|a\|_{\dot{B}_{2,\infty}^{-\sigma_1}}^\ell,
\ee
and
\be\label{4.33}
\|a^h\nabla a^\ell\|_{\dot{B}_{2,\infty}^{-\sigma_1}}\lesssim\|a^h\|_{\dot{B}_{p,1}^{\frac{N}{p}}}
\|\nabla a^\ell\|_{\dot{B}_{2,\infty}^{-\sigma_1}}\lesssim\|a\|_{\dot{B}_{p,1}^{\frac{N}{p}}}^h
\|a\|_{\dot{B}_{2,\infty}^{-\sigma_1}}^\ell.
\ee
Similar to \eqref{4.28} and \eqref{4.29}, one has
\be\label{4.34}
\begin{split}
\|a^\ell\nabla a^h\|_{\dot{B}_{2,\infty}^{-\sigma_1}}^\ell&\lesssim\|\nabla a^h\|_{\dot{B}_{p,1}^{\frac{N}{p}-1}}
\Big(\|a^\ell\|_{\dot{B}_{p,\infty}^{-\sigma_1+\frac{N}{p}-\frac{N}{2}+1}}
+\|a^\ell\|_{\dot{B}_{p,\infty}^{-\sigma_1+\frac{2N}{p}-N+1}}\Big)\\[1ex]
&\lesssim\|a\|_{\dot{B}_{p,1}^{\frac{N}{p}}}^h\|a^\ell\|_{\dot{B}_{p,\infty}^{-\sigma_1+\frac{2N}{p}-N+1}}
\lesssim\|a\|_{\dot{B}_{p,1}^{\frac{N}{p}}}^h\|a\|_{\dot{B}_{2,\infty}^{-\sigma_1}}^\ell,
\end{split}
\ee
and
\be\label{4.35}
\begin{split}
\|a^h\nabla a^h\|_{\dot{B}_{2,\infty}^{-\sigma_1}}^\ell&\lesssim\|\nabla a^h\|_{\dot{B}_{p,1}^{\frac{N}{p}-1}}
\Big(\|a^h\|_{\dot{B}_{p,\infty}^{-\sigma_1+\frac{N}{p}-\frac{N}{2}+1}}
+\|a^h\|_{\dot{B}_{p,\infty}^{-\sigma_1+\frac{2N}{p}-N+1}}\Big)\\[1ex]
&\lesssim\|a^h\|_{\dot{B}_{p,1}^{\frac{N}{p}}}\|a^h\|_{\dot{B}_{p,\infty}^{-\sigma_1+\frac{N}{p}-\frac{N}{2}+1}}
\lesssim\|a\|_{\dot{B}_{p,1}^{\frac{N}{p}}}^h\|a\|_{\dot{B}_{p,1}^{\frac{N}{p}}}^h.
\end{split}
\ee
As for the term $\bar{K}_1(a)a\nabla a$, we use the decomposition $\bar{K}_1(a)a\nabla a=\bar{K}_1(a)a\nabla a^\ell+\bar{K}_1(a)a\nabla a^h$ and get from \eqref{4.1}-\eqref{4.2}, Corollary \ref{co2.2} and Proposition \ref{pra.4} again that
\be\label{4.36}
\begin{split}
\|\bar{K}_1(a)a\nabla a^\ell\|_{\dot{B}_{2,\infty}^{-\sigma_1}}\lesssim\|\bar{K}_1(a)a\|_{\dot{B}_{p,1}^{\frac{N}{p}}}
\|\nabla a\|_{\dot{B}_{2,\infty}^{-\sigma_1}}^\ell\lesssim\|a\|_{\dot{B}_{p,1}^{\frac{N}{p}}}^2
\|a\|_{\dot{B}_{2,\infty}^{-\sigma_1}}^\ell,
\end{split}
\ee
and
\be\label{4.37}
\begin{split}
\|\bar{K}_1(a)a\nabla a^h\|_{\dot{B}_{2,\infty}^{-\sigma_1}}^\ell&\lesssim\|\nabla a^h\|_{\dot{B}_{p,1}^{\frac{N}{p}-1}}
\Big(\|\bar{K}_1(a)a\|_{\dot{B}_{p,\infty}^{-\sigma_1+\frac{N}{p}-\frac{N}{2}+1}}
+\|\bar{K}_1(a)a\|_{\dot{B}_{p,\infty}^{-\sigma_1+\frac{2N}{p}-N+1}}\Big)\\[1ex]
&\lesssim\|a^h\|_{\dot{B}_{p,1}^{\frac{N}{p}}}\|\bar{K}_1(a)\|_{\dot{B}_{p,1}^{\frac{N}{p}}}
\Big(\|a\|_{\dot{B}_{p,\infty}^{-\sigma_1+\frac{N}{p}-\frac{N}{2}+1}}
+\|a\|_{\dot{B}_{p,\infty}^{-\sigma_1+\frac{2N}{p}-N+1}}\Big)\\[1ex]
&\lesssim\|a\|_{\dot{B}_{p,1}^{\frac{N}{p}}}^2\Big(\|a\|_{\dot{B}_{2,\infty}^{-\sigma_1}}^\ell
+\|a\|_{\dot{B}_{p,1}^{\frac{N}{p}}}^h\Big).
\end{split}
\ee

\underline{Estimate of $K_2(a)\nabla\theta$}.  Similarly, we rewrite $K_2(a)=K_2^\prime(0)a+\bar{K}_2(a)a$, here $\bar{K}_2$ is a smooth function fulfilling $\bar{K}_2(0)=0$. For the term $a\nabla \theta$, we infer that
\be\label{4.38}
\|a^\ell\nabla \theta^\ell\|_{\dot{B}_{2,\infty}^{-\sigma_1}}\lesssim\|\nabla \theta^\ell\|_{\dot{B}_{p,1}^{\frac{N}{p}}}
\|a^\ell\|_{\dot{B}_{2,\infty}^{-\sigma_1}}\lesssim\|\theta\|_{\dot{B}_{2,1}^{\frac{N}{2}+1}}^\ell
\|a\|_{\dot{B}_{2,\infty}^{-\sigma_1}}^\ell,
\ee
and
\be\label{4.39}
\|a^h\nabla \theta^\ell\|_{\dot{B}_{2,\infty}^{-\sigma_1}}\lesssim\|a^h\|_{\dot{B}_{p,1}^{\frac{N}{p}}}
\|\nabla \theta^\ell\|_{\dot{B}_{2,\infty}^{-\sigma_1}}\lesssim\|a\|_{\dot{B}_{p,1}^{\frac{N}{p}}}^h
\|\theta\|_{\dot{B}_{2,\infty}^{-\sigma_1}}^\ell.
\ee
Arguing similarly as \eqref{4.28} and \eqref{4.29}, one has
\be\label{4.40}
\begin{split}
\|a^\ell\nabla \theta^h\|_{\dot{B}_{2,\infty}^{-\sigma_1}}^\ell&\lesssim\|\nabla \theta^h\|_{\dot{B}_{p,1}^{\frac{N}{p}-1}}
\Big(\|a^\ell\|_{\dot{B}_{p,\infty}^{-\sigma_1+\frac{N}{p}-\frac{N}{2}+1}}
+\|a^\ell\|_{\dot{B}_{p,\infty}^{-\sigma_1+\frac{2N}{p}-N+1}}\Big)\\[1ex]
&\lesssim\|\theta\|_{\dot{B}_{p,1}^{\frac{N}{p}}}^h\|a^\ell\|_{\dot{B}_{p,\infty}^{-\sigma_1+\frac{2N}{p}-N+1}}
\lesssim\|\theta\|_{\dot{B}_{p,1}^{\frac{N}{p}}}^h\|a\|_{\dot{B}_{2,\infty}^{-\sigma_1}}^\ell,
\end{split}
\ee
and
\be\label{4.41}
\begin{split}
\|a^h\nabla \theta^h\|_{\dot{B}_{2,\infty}^{-\sigma_1}}^\ell&\lesssim\|\nabla \theta^h\|_{\dot{B}_{p,1}^{\frac{N}{p}-1}}
\Big(\|a^h\|_{\dot{B}_{p,\infty}^{-\sigma_1+\frac{N}{p}-\frac{N}{2}+1}}
+\|a^h\|_{\dot{B}_{p,\infty}^{-\sigma_1+\frac{2N}{p}-N+1}}\Big)\\[1ex]
&\lesssim\|\theta^h\|_{\dot{B}_{p,1}^{\frac{N}{p}}}\|a^h\|_{\dot{B}_{p,\infty}^{-\sigma_1+\frac{N}{p}-\frac{N}{2}+1}}
\lesssim\|\theta\|_{\dot{B}_{p,1}^{\frac{N}{p}}}^h\|a\|_{\dot{B}_{p,1}^{\frac{N}{p}}}^h.
\end{split}
\ee
Regarding  for the term $\bar{K}_2(a)a\nabla \theta$, we have
\be\label{4.42}
\begin{split}
\|\bar{K}_2(a)a\nabla \theta^\ell\|_{\dot{B}_{2,\infty}^{-\sigma_1}}\lesssim\|\bar{K}_2(a)a\|_{\dot{B}_{p,1}^{\frac{N}{p}}}
\|\nabla \theta\|_{\dot{B}_{2,\infty}^{-\sigma_1}}^\ell\lesssim\|a\|_{\dot{B}_{p,1}^{\frac{N}{p}}}^2
\|\theta\|_{\dot{B}_{2,\infty}^{-\sigma_1}}^\ell,
\end{split}
\ee
and
\be\label{4.43}
\begin{split}
\|\bar{K}_2(a)a\nabla \theta^h\|_{\dot{B}_{2,\infty}^{-\sigma_1}}^\ell&\lesssim\|\nabla \theta^h\|_{\dot{B}_{p,1}^{\frac{N}{p}-1}}
\Big(\|\bar{K}_2(a)a\|_{\dot{B}_{p,\infty}^{-\sigma_1+\frac{N}{p}-\frac{N}{2}+1}}
+\|\bar{K}_2(a)a\|_{\dot{B}_{p,\infty}^{-\sigma_1+\frac{2N}{p}-N+1}}\Big)\\[1ex]
&\lesssim\|\theta^h\|_{\dot{B}_{p,1}^{\frac{N}{p}}}\|\bar{K}_2(a)\|_{\dot{B}_{p,1}^{\frac{N}{p}}}
\Big(\|a\|_{\dot{B}_{p,\infty}^{-\sigma_1+\frac{N}{p}-\frac{N}{2}+1}}
+\|a\|_{\dot{B}_{p,\infty}^{-\sigma_1+\frac{2N}{p}-N+1}}\Big)\\[1ex]
&\lesssim\|\theta\|_{\dot{B}_{p,1}^{\frac{N}{p}}}^h
\|a\|_{\dot{B}_{p,1}^{\frac{N}{p}}}\Big(\|a\|_{\dot{B}_{2,\infty}^{-\sigma_1}}^\ell
+\|a\|_{\dot{B}_{p,1}^{\frac{N}{p}}}^h\Big).
\end{split}
\ee

\underline{Estimate of $\theta\nabla{K_3(a)}$}. Decomposing $K_3(a)=K_3^\prime(0)a+\bar{K}_3(a)a$ implies
$\nabla K_3(a)=K_3^\prime(0)\nabla a+\nabla (\bar{K_3}(a)a)$. Then we have from \eqref{4.1} and \eqref{4.2} that
\be\label{4.44}
\|\nabla a^\ell \theta^\ell\|_{\dot{B}_{2,\infty}^{-\sigma_1}}\lesssim\|\nabla a^\ell\|_{\dot{B}_{p,1}^{\frac{N}{p}}}
\|\theta^\ell\|_{\dot{B}_{2,\infty}^{-\sigma_1}}\lesssim\|a\|_{\dot{B}_{2,1}^{\frac{N}{2}+1}}^\ell
\|\theta\|_{\dot{B}_{2,\infty}^{-\sigma_1}}^\ell,
\ee
\be\label{4.45}
\|\nabla a^\ell \theta^h\|_{\dot{B}_{2,\infty}^{-\sigma_1}}\lesssim\|\theta^h\|_{\dot{B}_{p,1}^{\frac{N}{p}}}
\|\nabla a^\ell\|_{\dot{B}_{2,\infty}^{-\sigma_1}}\lesssim\|\theta\|_{\dot{B}_{p,1}^{\frac{N}{p}}}^h
\|a\|_{\dot{B}_{2,\infty}^{-\sigma_1}}^\ell,
\ee
and
\be\label{4.46}
\begin{split}
\|\nabla a^h \theta\|_{\dot{B}_{2,\infty}^{-\sigma_1}}^\ell&\lesssim\|\nabla a^h\|_{\dot{B}_{p,1}^{\frac{N}{p}-1}}
\Big(\|\theta\|_{\dot{B}_{p,\infty}^{-\sigma_1+\frac{N}{p}-\frac{N}{2}+1}}
+\|\theta\|_{\dot{B}_{p,\infty}^{-\sigma_1+\frac{2N}{p}-N+1}}\Big)\\[1ex]
&\lesssim\|a^h\|_{\dot{B}_{p,1}^{\frac{N}{p}}}
\Big(\|\theta\|_{\dot{B}_{p,\infty}^{-\sigma_1+\frac{N}{p}-\frac{N}{2}+1}}^h
+\|\theta\|_{\dot{B}_{p,\infty}^{-\sigma_1+\frac{2N}{p}-N+1}}^\ell\Big)\\[1ex]
&\lesssim\|a\|_{\dot{B}_{p,1}^{\frac{N}{p}}}^h\Big(\|\theta\|_{\dot{B}_{p,1}^{\frac{N}{p}}}^h+\|\theta\|_{\dot{B}_{2,\infty}^{-\sigma_1}}^\ell
\Big).
\end{split}
\ee
In addition, the remaining term with $\bar{K}_3(a)a$ may be estimated  similarly as
\be\label{4.47}
\begin{split}
\|\nabla (\bar{K}_3(a)a) \theta\|_{\dot{B}_{2,\infty}^{-\sigma_1}}^\ell&\lesssim\|\bar{K}_3(a)a\|_{\dot{B}_{p,1}^{\frac{N}{p}}}
\Big(\|\theta\|_{\dot{B}_{p,\infty}^{-\sigma_1+\frac{N}{p}-\frac{N}{2}+1}}
+\|\theta\|_{\dot{B}_{p,\infty}^{-\sigma_1+\frac{2N}{p}-N+1}}\Big)\\[1ex]
&\lesssim\|a\|_{\dot{B}_{p,1}^{\frac{N}{p}}}^2
\Big(\|\theta\|_{\dot{B}_{p,\infty}^{-\sigma_1+\frac{N}{p}-\frac{N}{2}+1}}^h
+\|\theta\|_{\dot{B}_{p,\infty}^{-\sigma_1+\frac{2N}{p}-N+1}}^\ell\Big)\\[1ex]
&\lesssim\|a\|_{\dot{B}_{p,1}^{\frac{N}{p}}}^2\Big(\|\theta\|_{\dot{B}_{p,1}^{\frac{N}{p}}}^h+\|\theta\|_{\dot{B}_{2,\infty}^{-\sigma_1}}^\ell
\Big).
\end{split}
\ee

Next, we handle each term in $m$. Notice that
$$
{m}\equ -\bv\cdot\nabla\theta-\beta I(a)\Delta\theta+\frac{Q(\nabla\bv,\nabla\bv)}{1+a}-(K_2(a)+{K}_4(a)\theta){\rm div}\bv.
$$

\underline{Estimate of $\bv\cdot\nabla\theta$}. Decompose $\bv\cdot\nabla\theta=\bv^\ell\cdot\nabla\theta^\ell
+\bv^\ell\cdot\nabla\theta^h+\bv^h\cdot\nabla\theta^\ell+\bv^h\cdot\nabla\theta^h$. It follows from \eqref{4.1} and \eqref{4.2} that
\be\label{4.48}
\|\bv^\ell \nabla\theta^\ell\|_{\dot{B}_{2,\infty}^{-\sigma_1}}\lesssim\|\nabla \theta^\ell\|_{\dot{B}_{p,1}^{\frac{N}{p}}}
\|\bv^\ell\|_{\dot{B}_{2,\infty}^{-\sigma_1}}\lesssim\|\theta\|_{\dot{B}_{2,1}^{\frac{N}{2}+1}}^\ell
\|\bv\|_{\dot{B}_{2,\infty}^{-\sigma_1}}^\ell,
\ee
\be\label{4.49}
\|\bv^h\nabla \theta^\ell\|_{\dot{B}_{2,\infty}^{-\sigma_1}}\lesssim\|\bv^h\|_{\dot{B}_{p,1}^{\frac{N}{p}}}
\|\nabla \theta^\ell\|_{\dot{B}_{2,\infty}^{-\sigma_1}}\lesssim\|\bv\|_{\dot{B}_{p,1}^{\frac{N}{p}+1}}^h
\|\theta\|_{\dot{B}_{2,\infty}^{-\sigma_1}}^\ell,
\ee
\be\label{4.50}
\begin{split}
\|\bv^\ell\nabla \theta^h\|_{\dot{B}_{2,\infty}^{-\sigma_1}}^\ell&\lesssim\|\nabla \theta^h\|_{\dot{B}_{p,1}^{\frac{N}{p}-1}}
\Big(\|\bv^\ell\|_{\dot{B}_{p,\infty}^{-\sigma_1+\frac{N}{p}-\frac{N}{2}+1}}
+\|\bv^\ell\|_{\dot{B}_{p,\infty}^{-\sigma_1+\frac{2N}{p}-N+1}}\Big)\\[1ex]
&\lesssim\|\theta^h\|_{\dot{B}_{p,1}^{\frac{N}{p}}}
\|\bv\|_{\dot{B}_{p,\infty}^{-\sigma_1+\frac{2N}{p}-N+1}}^\ell
\lesssim\|\theta\|_{\dot{B}_{p,1}^{\frac{N}{p}}}^h\|\bv\|_{\dot{B}_{2,\infty}^{-\sigma_1}}^\ell,
\end{split}
\ee
and
\be\label{4.51}
\begin{split}
\|\bv^h\nabla \theta^h\|_{\dot{B}_{2,\infty}^{-\sigma_1}}^\ell&\lesssim\|\bv^h\|_{\dot{B}_{p,1}^{\frac{N}{p}-1}}
\Big(\|\nabla\theta^h\|_{\dot{B}_{p,\infty}^{-\sigma_1+\frac{N}{p}-\frac{N}{2}+1}}
+\|\nabla\theta^h\|_{\dot{B}_{p,\infty}^{-\sigma_1+\frac{2N}{p}-N+1}}\Big)\\[1ex]
&\lesssim\|\bv^h\|_{\dot{B}_{p,1}^{\frac{N}{p}-1}}
\|\theta\|_{\dot{B}_{p,\infty}^{-\sigma_1+\frac{N}{p}-\frac{N}{2}+2}}^h
\lesssim\|\bv\|_{\dot{B}_{p,1}^{\frac{N}{p}-1}}^h\|\theta\|_{\dot{B}_{p,1}^{\frac{N}{p}}}^h,
\end{split}
\ee
where in the last inequality we used that $\sigma_1>2-\frac{N}{2}.$

\underline{Estimate of $I(a)\Delta\theta$}.  One may write
$
I(a)=I^\prime(0)a+\bar{I}(a)a
$
for some smooth function $\bar{I}$ vanishing at $0$. Decomposing  $a\Delta\theta=a^\ell\Delta\theta^\ell+a^\ell\Delta\theta^h+a^h\Delta\theta^\ell+a^h\Delta\theta^h$, we have
\be\label{4.52}
\|a^\ell\Delta\theta^\ell\|_{\dot{B}_{2,\infty}^{-\sigma_1}}\lesssim\|\Delta\theta^\ell
\|_{\dot{B}_{p,1}^{\frac{N}{p}}}
\|a^\ell\|_{\dot{B}_{2,\infty}^{-\sigma_1}}\lesssim\|\theta\|_{\dot{B}_{2,1}^{\frac{N}{2}+1}}^\ell
\|a\|_{\dot{B}_{2,\infty}^{-\sigma_1}}^\ell,
\ee
and
\be\label{4.53}
\|a^h\Delta\theta^\ell\|_{\dot{B}_{2,\infty}^{-\sigma_1}}\lesssim\|a^h\|_{\dot{B}_{p,1}^{\frac{N}{p}}}
\|\Delta\theta^\ell\|_{\dot{B}_{2,\infty}^{-\sigma_1}}\lesssim\|a\|_{\dot{B}_{p,1}^{\frac{N}{p}}}^h
\|\theta\|_{\dot{B}_{2,\infty}^{-\sigma_1}}^\ell.
\ee
By using \eqref{4.2000}, one has
\be\label{4.54}
\begin{split}
\|a^\ell\Delta\theta^h\|_{\dot{B}_{2,\infty}^{-\sigma_1}}^\ell&\lesssim\|\Delta\theta^h\|_{\dot{B}_{p,1}^{\frac{N}{p}-2}}
\Big(\|a^\ell\|_{\dot{B}_{p,\infty}^{-\sigma_1+\frac{N}{p}-\frac{N}{2}+2}}
+\|a^\ell\|_{\dot{B}_{p,\infty}^{-\sigma_1+\frac{2N}{p}-N+1}}\Big)\\[1ex]
&\lesssim\|\theta\|_{\dot{B}_{p,1}^{\frac{N}{p}}}^h\|a^\ell\|_{\dot{B}_{p,\infty}^{-\sigma_1+\frac{2N}{p}-N+1}}
\lesssim\|\theta\|_{\dot{B}_{p,1}^{\frac{N}{p}}}^h\|a\|_{\dot{B}_{2,\infty}^{-\sigma_1}}^\ell,
\end{split}
\ee
and
\be\label{4.55}
\begin{split}
\|a^h\Delta\theta^h\|_{\dot{B}_{2,\infty}^{-\sigma_1}}^\ell&\lesssim\|\Delta\theta^h\|_{\dot{B}_{p,1}^{\frac{N}{p}-2}}
\Big(\|a^h\|_{\dot{B}_{p,\infty}^{-\sigma_1+\frac{N}{p}-\frac{N}{2}+2}}
+\|a^h\|_{\dot{B}_{p,\infty}^{-\sigma_1+\frac{2N}{p}-N+1}}\Big)\\[1ex]
&\lesssim\|\theta^h\|_{\dot{B}_{p,1}^{\frac{N}{p}}}\|a^h\|_{\dot{B}_{p,\infty}^{-\sigma_1+\frac{N}{p}-\frac{N}{2}+2}}
\lesssim\|\theta\|_{\dot{B}_{p,1}^{\frac{N}{p}}}^h\|a\|_{\dot{B}_{p,1}^{\frac{N}{p}}}^h.
\end{split}
\ee
On the other hand, from \eqref{4.1}, \eqref{4.2000}, Proposition \ref{pra.4} and Corollaries \ref{co2.1} and \ref{co2.2} again, we still have
\be\label{4.56}
\|\bar{I}(a)a\Delta\theta^\ell\|_{\dot{B}_{2,\infty}^{-\sigma_1}}
\lesssim\|\bar{I}(a)a\|_{\dot{B}_{p,1}^{\frac{N}{p}}}\|\Delta\theta^\ell\|_{\dot{B}_{2,\infty}^{-\sigma_1}}
\lesssim\|a\|_{\dot{B}_{p,1}^{\frac{N}{p}}}^2\|\theta\|_{\dot{B}_{2,\infty}^{-\sigma_1}}^\ell,
\ee
and
\be\label{4.57}
\begin{split}
\|\bar{I}(a)a\Delta\theta^h\|_{\dot{B}_{2,\infty}^{-\sigma_1}}^\ell
&\lesssim\|\Delta\theta^h\|_{\dot{B}_{p,1}^{\frac{N}{p}-2}}
\Big(\|\bar{I}(a)a\|_{\dot{B}_{p,\infty}^{-\sigma_1+\frac{N}{p}-\frac{N}{2}+2}}
+\|\bar{I}(a)a\|_{\dot{B}_{p,\infty}^{-\sigma_1+\frac{2N}{p}-N+1}}\Big)\\[1ex]
&\lesssim\|\theta^h\|_{\dot{B}_{p,1}^{\frac{N}{p}}}\|\bar{I}(a)\|_{\dot{B}_{p,1}^{\frac{N}{p}}}
\Big(\|a\|_{\dot{B}_{p,\infty}^{-\sigma_1+\frac{N}{p}-\frac{N}{2}+2}}
+\|a\|_{\dot{B}_{p,\infty}^{-\sigma_1+\frac{2N}{p}-N+1}}\Big)\\[1ex]
&\lesssim\|\theta\|_{\dot{B}_{p,1}^{\frac{N}{p}}}^h
\|a\|_{\dot{B}_{p,1}^{\frac{N}{p}}}\Big(\|a\|_{\dot{B}_{p,\infty}^{-\sigma_1+\frac{N}{p}-\frac{N}{2}+2}}^h
+\|a\|_{\dot{B}_{p,\infty}^{-\sigma_1+\frac{2N}{p}-N+1}}^\ell\Big)\\[1ex]
&\lesssim\|\theta\|_{\dot{B}_{p,1}^{\frac{N}{p}}}^h
\|a\|_{\dot{B}_{p,1}^{\frac{N}{p}}}\Big(\|a\|_{\dot{B}_{p,1}^{\frac{N}{p}}}^h
+\|a\|_{\dot{B}_{2,\infty}^{-\sigma_1}}^\ell\Big).
\end{split}
\ee

\underline{Estimate of $Q(\nabla\bv,\nabla\bv)/(1+a)$}. We first get from \eqref{4.1} that
\be\label{4.58}
\|Q(\nabla\bv,\nabla\bv)/(1+a)\|_{\dot{B}_{2,\infty}^{-\sigma_1}}^\ell\lesssim \Big(1+\|a\|_{\dot{B}_{p,1}^{\frac{N}{p}}}\Big)
\||\nabla\bv|^2\|_{\dot{B}_{2,\infty}^{-\sigma_1}}.
\ee
In what follows, we focus on the estimation of $\||\nabla\bv|^2\|_{\dot{B}_{2,\infty}^{-\sigma_1}}$. By \eqref{4.1} again, one derives that
\be\label{4.59}
\|\nabla\bv^\ell\nabla\bv^\ell\|_{\dot{B}_{2,\infty}^{-\sigma_1}}\lesssim \|\nabla\bv^\ell\|_{\dot{B}_{p,1}^{\frac{N}{p}}}
\|\nabla\bv^\ell\|_{\dot{B}_{2,\infty}^{-\sigma_1}}
\lesssim\|\bv\|_{\dot{B}_{2,1}^{\frac{N}{2}+1}}^\ell\|\bv\|_{\dot{B}_{2,\infty}^{-\sigma_1}}^\ell,
\ee
\be\label{4.60}
\|\nabla\bv^\ell\nabla\bv^h\|_{\dot{B}_{2,\infty}^{-\sigma_1}}\lesssim \|\nabla\bv^h\|_{\dot{B}_{p,1}^{\frac{N}{p}}}
\|\nabla\bv^\ell\|_{\dot{B}_{2,\infty}^{-\sigma_1}}
\lesssim\|\bv\|_{\dot{B}_{p,1}^{\frac{N}{p}+1}}^h\|\bv\|_{\dot{B}_{2,\infty}^{-\sigma_1}}^\ell.
\ee
For the term $\nabla\bv^h\nabla\bv^h$, we apply estimate \eqref{4.2001} to get that
\be\label{4.61}
\|\nabla\bv^h\nabla\bv^h\|_{\dot{B}_{2,\infty}^{-\sigma_1}}\lesssim \|\nabla\bv^h\|_{\dot{B}_{p,1}^{\frac{N}{p}-1}}
\|\nabla\bv^h\|_{\dot{B}_{p,\infty}^{-\sigma_1+\frac{N}{p}-\frac{N}{2}+1}}
\lesssim\|\bv\|_{\dot{B}_{p,1}^{\frac{N}{p}}}^h\|\bv\|_{\dot{B}_{p,1}^{\frac{N}{p}}}^h,
\ee
where we have used that $\sigma_1>2-\frac{N}{2}$.

\underline{Estimate of $K_2(a){\rm div}\bv$}. As before, writing  $K_2(a)=K_2^\prime(0)a+\bar{K}_2(a)a$, we have
\be\label{4.62}
\|a^\ell{\rm div}\bv^\ell\|_{\dot{B}_{2,\infty}^{-\sigma_1}}\lesssim\|{\rm div}\bv^\ell\|_{\dot{B}_{p,1}^{\frac{N}{p}}}
\|a^\ell\|_{\dot{B}_{2,\infty}^{-\sigma_1}}\lesssim\|\bv\|_{\dot{B}_{2,1}^{\frac{N}{2}+1}}^\ell
\|a\|_{\dot{B}_{2,\infty}^{-\sigma_1}}^\ell,
\ee
\be\label{4.64}
\|a^\ell{\rm div}\bv^h\|_{\dot{B}_{2,\infty}^{-\sigma_1}}\lesssim\|{\rm div}\bv^h\|_{\dot{B}_{p,1}^{\frac{N}{p}}}
\|a^\ell\|_{\dot{B}_{2,\infty}^{-\sigma_1}}
\lesssim\|\bv\|_{\dot{B}_{p,1}^{\frac{N}{p}+1}}^h\|a\|_{\dot{B}_{2,\infty}^{-\sigma_1}}^\ell,
\ee
\be\label{4.63}
\|a^h{\rm div}\bv^\ell\|_{\dot{B}_{2,\infty}^{-\sigma_1}}\lesssim\|a^h\|_{\dot{B}_{p,1}^{\frac{N}{p}}}
\|{\rm div}\bv^\ell\|_{\dot{B}_{2,\infty}^{-\sigma_1}}\lesssim\|a\|_{\dot{B}_{p,1}^{\frac{N}{p}}}^h
\|\bv\|_{\dot{B}_{2,\infty}^{-\sigma_1}}^\ell
\ee
and
\be\label{4.65}
\begin{split}
\|a^h{\rm div}\bv^h\|_{\dot{B}_{2,\infty}^{-\sigma_1}}^\ell&\lesssim\|{\rm div}\bv^h\|_{\dot{B}_{p,1}^{\frac{N}{p}-1}}
\Big(\|a^h\|_{\dot{B}_{p,\infty}^{-\sigma_1+\frac{N}{p}-\frac{N}{2}+1}}
+\|a^h\|_{\dot{B}_{p,\infty}^{-\sigma_1+\frac{2N}{p}-N+1}}\Big)\\[1ex]
&\lesssim\|\bv^h\|_{\dot{B}_{p,1}^{\frac{N}{p}}}\|a^h\|_{\dot{B}_{p,\infty}^{-\sigma_1+\frac{N}{p}-\frac{N}{2}+1}}
\lesssim\|\bv\|_{\dot{B}_{p,1}^{\frac{N}{p}+1}}^h\|a\|_{\dot{B}_{p,1}^{\frac{N}{p}}}^h.
\end{split}
\ee
As for the term $\bar{K}_2(a)a{\rm div}\bv$, we deduce
\be\label{4.66}
\begin{split}
\|\bar{K}_2(a)a{\rm div}\bv^\ell\|_{\dot{B}_{2,\infty}^{-\sigma_1}}\lesssim\|\bar{K}_2(a)a\|_{\dot{B}_{p,1}^{\frac{N}{p}}}
\|{\rm div}\bv\|_{\dot{B}_{2,\infty}^{-\sigma_1}}^\ell\lesssim\|a\|_{\dot{B}_{p,1}^{\frac{N}{p}}}^2
\|\bv\|_{\dot{B}_{2,\infty}^{-\sigma_1}}^\ell,
\end{split}
\ee
and
\be\label{4.67}
\begin{split}
\|\bar{K}_2(a)a{\rm div}\bv^h\|_{\dot{B}_{2,\infty}^{-\sigma_1}}^\ell&\lesssim\|{\rm div}\bv^h\|_{\dot{B}_{p,1}^{\frac{N}{p}-1}}
\Big(\|\bar{K}_2(a)a\|_{\dot{B}_{p,\infty}^{-\sigma_1+\frac{N}{p}-\frac{N}{2}+1}}
+\|\bar{K}_2(a)a\|_{\dot{B}_{p,\infty}^{-\sigma_1+\frac{2N}{p}-N+1}}\Big)\\[1ex]
&\lesssim\|\bv^h\|_{\dot{B}_{p,1}^{\frac{N}{p}}}\|\bar{K}_2(a)\|_{\dot{B}_{p,1}^{\frac{N}{p}}}
\Big(\|a\|_{\dot{B}_{p,\infty}^{-\sigma_1+\frac{N}{p}-\frac{N}{2}+1}}
+\|a\|_{\dot{B}_{p,\infty}^{-\sigma_1+\frac{2N}{p}-N+1}}\Big)\\[1ex]
&\lesssim\|\bv\|_{\dot{B}_{p,1}^{\frac{N}{p}+1}}^h
\|a\|_{\dot{B}_{p,1}^{\frac{N}{p}}}\Big(\|a\|_{\dot{B}_{2,\infty}^{-\sigma_1}}^\ell
+\|a\|_{\dot{B}_{p,1}^{\frac{N}{p}}}^h\Big).
\end{split}
\ee

\underline{Estimate of $K_4(a)\theta{\rm div}\bv$}. Noticing  that $K_4(0)\neq 0$, we decompose $K_4(a)=K_4(0)+\bar{K}_4(a)$, where $\bar{K}_4(a)$ is a smooth function satisfying $\bar{K}_4(0)=0$. In the following, we handle the term $\theta{\rm div}\bv$ and $\bar{K}_4(a)\theta{\rm div}\bv$, respectively. Firstly, for the term $\theta{\rm div}\bv$, using \eqref{4.1} and \eqref{4.2000}, one has
\be\label{4.68}
\|\theta^\ell{\rm div}\bv\|_{\dot{B}_{2,\infty}^{-\sigma_1}}\lesssim\|{\rm div}\bv\|_{\dot{B}_{p,1}^{\frac{N}{p}}}\|\theta\|_{\dot{B}_{2,\infty}^{-\sigma_1}}^\ell
\lesssim\Big(\|\bv\|_{\dot{B}_{2,1}^{\frac{N}{2}+1}}^\ell+\|\bv\|_{\dot{B}_{p,1}^{\frac{N}{p}+1}}^h\Big)
\|\theta\|_{\dot{B}_{2,\infty}^{-\sigma_1}}^\ell,
\ee
\be\label{4.69}
\|\theta^h{\rm div}\bv^\ell\|_{\dot{B}_{2,\infty}^{-\sigma_1}}\lesssim\|\theta^h\|_{\dot{B}_{p,1}^{\frac{N}{p}}}
\|{\rm div}\bv^\ell\|_{\dot{B}_{2,\infty}^{-\sigma_1}}\lesssim
\|\theta\|_{\dot{B}_{p,1}^{\frac{N}{p}}}^h\|\bv\|_{\dot{B}_{2,\infty}^{-\sigma_1}}^\ell,
\ee
and
\be\label{4.70}
\begin{split}
\|\theta^h{\rm div}\bv^h\|_{\dot{B}_{2,\infty}^{-\sigma_1}}^\ell&\lesssim\|\theta^h\|_{\dot{B}_{p,1}^{\frac{N}{p}-2}}
\Big(\|{\rm div}\bv^h\|_{\dot{B}_{p,\infty}^{-\sigma_1+\frac{N}{p}-\frac{N}{2}+2}}+\|{\rm div}\bv^h\|_{\dot{B}_{p,\infty}^{-\sigma_1+\frac{2N}{p}-N+1}}\Big)\\[1ex]
&\lesssim \|\theta\|_{\dot{B}_{p,1}^{\frac{N}{p}-2}}^h\|\bv\|_{\dot{B}_{p,\infty}^{-\sigma_1+\frac{N}{p}-\frac{N}{2}+3}}^h
\lesssim\|\theta\|_{\dot{B}_{p,1}^{\frac{N}{p}-2}}^h\|\bv\|_{\dot{B}_{p,1}^{\frac{N}{p}+1}}^h.
\end{split}
\ee
For the term $\bar{K}_4(a)\theta{\rm div}\bv$, we have from \eqref{4.1} and \eqref{4.2} again that
\be\label{4.71}
\begin{split}
\|\bar{K}_4(a)\theta{\rm div}\bv^\ell\|_{\dot{B}_{2,\infty}^{-\sigma_1}}&\lesssim\|\bar{K}_4(a)\theta\|_{\dot{B}_{p,1}^{\frac{N}{p}}}
\|{\rm div}\bv\|_{\dot{B}_{2,\infty}^{-\sigma_1}}^\ell\lesssim \|a\|_{\dot{B}_{p,1}^{\frac{N}{p}}}\|\theta\|_{\dot{B}_{p,1}^{\frac{N}{p}}}\|\bv\|_{\dot{B}_{2,\infty}^{-\sigma_1}}^\ell\\[1ex]
&\lesssim\|a\|_{\dot{B}_{p,1}^{\frac{N}{p}}}\Big(\|\theta\|_{\dot{B}_{2,1}^{\frac{N}{2}}}^\ell
+\|\theta\|_{\dot{B}_{p,1}^{\frac{N}{p}}}^h\Big)\|\bv\|_{\dot{B}_{2,\infty}^{-\sigma_1}}^\ell\\[1ex]
&\lesssim\|a\|_{\dot{B}_{p,1}^{\frac{N}{p}}}\|\theta\|_{\dot{B}_{2,1}^{\frac{N}{2}}}^\ell
\|\bv\|_{\dot{B}_{2,\infty}^{-\sigma_1}}^\ell+\|a\|_{\dot{B}_{p,1}^{\frac{N}{p}}}\|\theta\|_{\dot{B}_{p,1}^{\frac{N}{p}}}^h
\|\bv\|_{\dot{B}_{2,\infty}^{-\sigma_1}}^\ell
\end{split}
\ee
and
\be\label{4.72}
\begin{split}
\|\bar{K}_4(a)\theta{\rm div}\bv^h\|_{\dot{B}_{2,\infty}^{-\sigma_1}}^\ell
&\lesssim\|{\rm div}\bv^h\|_{\dot{B}_{p,1}^{\frac{N}{p}-1}}
\Big(\|\bar{K}_4(a)\theta\|_{\dot{B}_{p,\infty}^{-\sigma_1+\frac{N}{p}-\frac{N}{2}+1}}
+\|\bar{K}_4(a)\theta\|_{\dot{B}_{p,\infty}^{-\sigma_1+\frac{2N}{p}-N+1}}\Big)\\[1ex]
&\lesssim\|\bv^h\|_{\dot{B}_{p,1}^{\frac{N}{p}}}\|\bar{K}_4(a)\|_{\dot{B}_{p,1}^{\frac{N}{p}}}
\Big(\|\theta\|_{\dot{B}_{p,\infty}^{-\sigma_1+\frac{N}{p}-\frac{N}{2}+1}}^h
+\|\theta\|_{\dot{B}_{p,\infty}^{-\sigma_1+\frac{2N}{p}-N+1}}^\ell\Big)\\[1ex]
&\lesssim\|\bv\|_{\dot{B}_{p,1}^{\frac{N}{p}}}^h\|a\|_{\dot{B}_{p,1}^{\frac{N}{p}}}\Big(
\|\theta\|_{\dot{B}_{p,1}^{\frac{N}{p}-1}}^h+\|\theta\|_{\dot{B}_{2,\infty}^{-\sigma_1}}^\ell\Big)\\[1ex]
&\lesssim\|\bv\|_{\dot{B}_{p,1}^{\frac{N}{p}}}^h
\|\theta\|_{\dot{B}_{p,1}^{\frac{N}{p}-1}}^h\|a\|_{\dot{B}_{p,1}^{\frac{N}{p}}}
+\|\bv\|_{\dot{B}_{p,1}^{\frac{N}{p}}}^h\|a\|_{\dot{B}_{p,1}^{\frac{N}{p}}}\|\theta\|_{\dot{B}_{2,\infty}^{-\sigma_1}}^\ell.
\end{split}
\ee
Plugging all  estimates above in \eqref{4.13}, we end up with the proof of \eqref{4.11}.
\end{proof}

By the definition of $\mathcal{X}_p(t)$ in Theorem \ref{th1}, one has
\be\nonumber
\begin{split}
\|(a, \bv, \theta)\|_{L_t^2(\dot{B}_{p,1}^{\frac{N}{p}})}^\ell&\lesssim\|(a, \bv, \theta)\|_{L_t^2(\dot{B}_{2,1}^{\frac{N}{2}})}^\ell\lesssim\Big(\|(a, \bv, \theta)\|_{L_t^\infty(\dot{B}_{2,1}^{\frac{N}{2}-1})}^\ell\Big)^{\frac{1}{2}}\Big(\|(a, \bv, \theta)\|_{L_t^1(\dot{B}_{2,1}^{\frac{N}{2}+1})}^\ell\Big)^{\frac{1}{2}},
\end{split}
\ee
\be\nonumber
\|a\|_{L_t^2(\dot{B}_{p,1}^{\frac{N}{p}})}^h\lesssim\Big(\|a\|_{L_t^\infty(\dot{B}_{p,1}^{\frac{N}{p}})}^h\Big)^{\frac{1}{2}}
\Big(\|a\|_{L_t^1(\dot{B}_{p,1}^{\frac{N}{p}})}^h\Big)^{\frac{1}{2}},
\ee
\be\nonumber
\|\bv\|_{L_t^2(\dot{B}_{p,1}^{\frac{N}{p}})}^h\lesssim\Big(\|\bv\|_{L_t^\infty(\dot{B}_{p,1}^{\frac{N}{p}-1})}^h\Big)^{\frac{1}{2}}
\Big(\|\bv\|_{L_t^1(\dot{B}_{p,1}^{\frac{N}{p}+1})}^h\Big)^{\frac{1}{2}}
\ee
and
\be\nonumber
\|\theta\|_{L_t^2(\dot{B}_{p,1}^{\frac{N}{p}-1})}^h\lesssim\Big(\|\theta\|_{L_t^\infty(\dot{B}_{p,1}^{\frac{N}{p}-2})}^h\Big)^{\frac{1}{2}}
\Big(\|\theta\|_{L_t^1(\dot{B}_{p,1}^{\frac{N}{p}})}^h\Big)^{\frac{1}{2}}.
\ee
On the other hand, it follows that
\be\nonumber
\|a\|_{L_t^\infty(\dot{B}_{p,1}^{\frac{N}{p}})}\lesssim\|a\|_{L_t^\infty(\dot{B}_{p,1}^{\frac{N}{p}})}^\ell
+\|a\|_{L_t^\infty(\dot{B}_{p,1}^{\frac{N}{p}})}^h\lesssim\|a\|_{L_t^\infty(\dot{B}_{2,1}^{\frac{N}{2}-1})}^\ell
+\|a\|_{L_t^\infty(\dot{B}_{p,1}^{\frac{N}{p}})}^h.
\ee
Then, we have
\be\label{4.73}
\int_0^t(A_1(\tau)+A_2(\tau))d\tau\leq \mathcal{X}_p+\mathcal{X}_p^2+\mathcal{X}_p^3\leq C\mathcal{X}_{p,0},
\ee
which yields from Gronwall's inequality that
\be\label{4.74}
\|(a,\bv,\theta)\|_{\dot{B}_{2,\infty}^{-\sigma_1}}^\ell\leq C_0
\ee
for all $t\geq 0$, where $C_0>0$ depends on $\|(a_0, \bv_0, \theta_0)\|_{\dot{B}_{2,\infty}^{-\sigma_1}}^\ell$ and $\mathcal{X}_{p,0}$.
\section{Proofs of main results}\label{s:6}
This section is devoted to proving Theorem \ref{th2} and Corollary \ref{col1}.
\subsection{Proof of Theorem \ref{th2}}
From Lemmas \ref{lem2} and \ref{lem3}, one deduces that
\be\label{5.1}
\begin{split}
  &\quad\frac{d}{dt}\Big(\|(a, \bv, \theta)\|_{\dot{B}_{2,1}^{\frac{N}{2}-1}}^\ell+\|(\nabla a, \bv)\|_{\dot{B}_{p,1}^{\frac{N}{p}-1}}^h+\|\theta\|_{\dot{B}_{p,1}^{\frac{N}{p}-2}}^h\Big)\\[1ex]
  &\quad\quad\quad\quad\quad\quad\quad\quad\quad\quad\quad\quad\quad\quad
  +\Big(\|(a, \bv, \theta)\|_{\dot{B}_{2,1}^{\frac{N}{2}+1}}^\ell
  +\|(a, \theta)\|_{\dot{B}_{p,1}^{\frac{N}{p}}}^h+\|\bv\|_{\dot{B}_{p,1}^{\frac{N}{p}+1}}^h\Big)\\[1ex]
  &\lesssim\|(f, {\bf g}, {m})\|_{\dot{B}_{2,1}^{\frac{N}{2}-1}}^\ell+\|(f, m)\|_{\dot{B}_{p,1}^{\frac{N}{p}-2}}^h
  +\|{\bf g}\|_{\dot{B}_{p,1}^{\frac{N}{p}-1}}^h+\|\nabla\bv\|_{\dot{B}_{p,1}^{\frac{N}{p}}}\|a\|_{\dot{B}_{p,1}^{\frac{N}{p}}}.
\end{split}
\ee
In what follows, we deal with the terms in the right hand of \eqref{5.1} one by one. Firstly, for the last term, we have
\be\label{5.2}
\begin{split}
\|\nabla\bv\|_{\dot{B}_{p,1}^{\frac{N}{p}}}\|a\|_{\dot{B}_{p,1}^{\frac{N}{p}}}
&\lesssim\Big(\|a\|_{\dot{B}_{2,1}^{\frac{N}{2}-1}}^\ell+\|a\|_{\dot{B}_{p,1}^{\frac{N}{p}}}^h\Big)
\Big(\|\bv\|_{\dot{B}_{2,1}^{\frac{N}{2}+1}}^\ell+\|\bv\|_{\dot{B}_{p,1}^{\frac{N}{p}+1}}^h\Big)\\[1ex]
&\lesssim\mathcal{X}_p(t)\Big(\|\bv\|_{\dot{B}_{2,1}^{\frac{N}{2}+1}}^\ell+\|\bv\|_{\dot{B}_{p,1}^{\frac{N}{p}+1}}^h\Big).
\end{split}
\ee
Next, we estimate $\|f\|_{\dot{B}_{p,1}^{\frac{N}{p}-2}}^h$ and notice that
$$
\|f\|_{\dot{B}_{p,1}^{\frac{N}{p}-2}}^h\lesssim \|a\bv\|_{\dot{B}_{p,1}^{\frac{N}{p}-1}}^h.
$$
Decomposing $a\bv=a^\ell\bv^\ell+a^\ell\bv^h+a^h\bv$, we have
$$
\|a^\ell\bv^h\|_{\dot{B}_{p,1}^{\frac{N}{p}-1}}^h\lesssim\|a^\ell\|_{\dot{B}_{p,1}^{\frac{N}{p}-1}}
\|\bv^h\|_{\dot{B}_{p,1}^{\frac{N}{p}}}\lesssim\|a\|_{\dot{B}_{2,1}^{\frac{N}{2}-1}}^\ell
\|\bv\|_{\dot{B}_{p,1}^{\frac{N}{p}+1}}^h\lesssim\mathcal{X}_p(t)\|\bv\|_{\dot{B}_{p,1}^{\frac{N}{p}+1}}^h,
$$
and
$$
\|a^h\bv\|_{\dot{B}_{p,1}^{\frac{N}{p}-1}}^h\lesssim\|a^h\|_{\dot{B}_{p,1}^{\frac{N}{p}}}
\|\bv\|_{\dot{B}_{p,1}^{\frac{N}{p}-1}}\lesssim\mathcal{X}_p(t)\|a\|_{\dot{B}_{p,1}^{\frac{N}{p}}}^h.
$$
It follows from Corollary \ref{co2.1} and Bernstein inequality that
$$
\|a^\ell\bv^\ell\|_{\dot{B}_{p,1}^{\frac{N}{p}-1}}^h\lesssim\|a^\ell\bv^\ell\|_{\dot{B}_{2,1}^{\frac{N}{2}+1}}
\lesssim\|a^\ell\|_{L^\infty}\|\bv^\ell\|_{\dot{B}_{2,1}^{\frac{N}{2}+1}}
+\|\bv^\ell\|_{L^\infty}\|a^\ell\|_{\dot{B}_{2,1}^{\frac{N}{2}+1}}
\lesssim\mathcal{X}_p(t)\|(a,\bv)\|_{\dot{B}_{2,1}^{\frac{N}{2}+1}}^\ell.
$$
Therefore, we conclude that
\be\label{5.3}
\|f\|_{\dot{B}_{p,1}^{\frac{N}{p}-2}}^h\lesssim\mathcal{X}_p(t)\Big(\|a\|_{\dot{B}_{p,1}^{\frac{N}{p}}}^h
+\|(a,\bv)\|_{\dot{B}_{2,1}^{\frac{N}{2}+1}}^\ell+\|\bv\|_{\dot{B}_{p,1}^{\frac{N}{p}+1}}^h\Big).
\ee

Similarly, we deal with $\|{\bf g}\|_{\dot{B}_{p,1}^{\frac{N}{p}-1}}^h$ as follows.
\be\label{5.4}
\|\bv\cdot\nabla \bv\|_{\dot{B}_{p,1}^{\frac{N}{p}-1}}^h\lesssim\|\bv\|_{\dot{B}_{p,1}^{\frac{N}{p}-1}}
\|\nabla\bv\|_{\dot{B}_{p,1}^{\frac{N}{p}}}
\lesssim\mathcal{X}_p(t)\Big(\|\bv\|_{\dot{B}_{2,1}^{\frac{N}{2}+1}}^\ell+\|\bv\|_{\dot{B}_{p,1}^{\frac{N}{p}+1}}^h\Big).
\ee
\be\label{5.5}
\begin{split}
\|I(a)\widetilde{\mathcal{A}}\bv\|_{\dot{B}_{p,1}^{\frac{N}{p}-1}}^h&\lesssim\|I(a)\|_{\dot{B}_{p,1}^{\frac{N}{p}}}
\|\widetilde{\mathcal{A}}\bv\|_{\dot{B}_{p,1}^{\frac{N}{p}-1}}\\[1ex]
&
\lesssim\|a\|_{\dot{B}_{p,1}^{\frac{N}{p}}}\Big(\|\bv\|_{\dot{B}_{2,1}^{\frac{N}{2}+1}}^\ell
+\|\bv\|_{\dot{B}_{p,1}^{\frac{N}{p}+1}}^h\Big)
\lesssim\mathcal{X}_p(t)\Big(\|\bv\|_{\dot{B}_{2,1}^{\frac{N}{2}+1}}^\ell+\|\bv\|_{\dot{B}_{p,1}^{\frac{N}{p}+1}}^h\Big).
\end{split}
\ee

\be\label{5.6}
\begin{split}
&\|K_1(a)\nabla a\|_{\dot{B}_{p,1}^{\frac{N}{p}-1}}^h\lesssim\|K_1(a)\|_{\dot{B}_{p,1}^{\frac{N}{p}}}
\|\nabla a\|_{\dot{B}_{p,1}^{\frac{N}{p}-1}}\lesssim\|a\|_{\dot{B}_{p,1}^{\frac{N}{p}}}^2
\lesssim\|a^\ell\|_{\dot{B}_{p,1}^{\frac{N}{p}}}^2
+\|a^h\|_{\dot{B}_{p,1}^{\frac{N}{p}}}^2\\[1ex]
&\quad\quad\quad\lesssim\|a^\ell\|_{\dot{B}_{p,1}^{\frac{N}{p}-1}}\|a^\ell\|_{\dot{B}_{p,1}^{\frac{N}{p}+1}}
+\|a^h\|_{\dot{B}_{p,1}^{\frac{N}{p}}}\|a^h\|_{\dot{B}_{p,1}^{\frac{N}{p}}}
\lesssim\mathcal{X}_p(t)\Big(\|a\|_{\dot{B}_{2,1}^{\frac{N}{2}+1}}^\ell+\|a\|_{\dot{B}_{p,1}^{\frac{N}{p}}}^h\Big).
\end{split}
\ee
\be\label{5.7}
\begin{split}
&\quad\|K_2(a)\nabla \theta+\theta \nabla K_3(a)\|_{\dot{B}_{p,1}^{\frac{N}{p}-1}}^h\\[1ex]
&\lesssim\|K_2(a)\|_{\dot{B}_{p,1}^{\frac{N}{p}}}
\|\nabla \theta\|_{\dot{B}_{p,1}^{\frac{N}{p}-1}}+\|\theta\|_{\dot{B}_{p,1}^{\frac{N}{p}}}
\|\nabla K_3(a)\|_{\dot{B}_{p,1}^{\frac{N}{p}-1}}
\lesssim\|a\|_{\dot{B}_{p,1}^{\frac{N}{p}}}
\|\theta\|_{\dot{B}_{p,1}^{\frac{N}{p}}}\\[1ex]
&\lesssim\|a\|_{\dot{B}_{p,1}^{\frac{N}{p}}}^\ell\|\theta\|_{\dot{B}_{p,1}^{\frac{N}{p}}}^\ell
+\|a\|_{\dot{B}_{p,1}^{\frac{N}{p}}}^\ell\|\theta\|_{\dot{B}_{p,1}^{\frac{N}{p}}}^h+
\|a\|_{\dot{B}_{p,1}^{\frac{N}{p}}}^h\|\theta\|_{\dot{B}_{p,1}^{\frac{N}{p}}}^\ell
+\|a\|_{\dot{B}_{p,1}^{\frac{N}{p}}}^h\|\theta\|_{\dot{B}_{p,1}^{\frac{N}{p}}}^h\\[1ex]
&\lesssim(\|a\|_{\dot{B}_{2,1}^{\frac{N}{2}}}^\ell)^2+(\|\theta\|_{\dot{B}_{2,1}^{\frac{N}{2}}}^\ell)^2
+\|a\|_{\dot{B}_{2,1}^{\frac{N}{2}-1}}^\ell\|\theta\|_{\dot{B}_{p,1}^{\frac{N}{p}}}^h
+\|a\|_{\dot{B}_{p,1}^{\frac{N}{p}}}^h\|\theta\|_{\dot{B}_{2,1}^{\frac{N}{2}-1}}^\ell
+\|a\|_{\dot{B}_{p,1}^{\frac{N}{p}}}^h\|\theta\|_{\dot{B}_{p,1}^{\frac{N}{p}}}^h\\[1ex]
&\lesssim\|a\|_{\dot{B}_{2,1}^{\frac{N}{2}-1}}^\ell\|a\|_{\dot{B}_{2,1}^{\frac{N}{2}+1}}^\ell
+\|\theta\|_{\dot{B}_{2,1}^{\frac{N}{2}-1}}^\ell\|\theta\|_{\dot{B}_{2,1}^{\frac{N}{2}+1}}^\ell
+\|a\|_{\dot{B}_{2,1}^{\frac{N}{2}-1}}^\ell\|\theta\|_{\dot{B}_{p,1}^{\frac{N}{p}}}^h
+\|a\|_{\dot{B}_{p,1}^{\frac{N}{p}}}^h\|\theta\|_{\dot{B}_{2,1}^{\frac{N}{2}-1}}^\ell
+\|a\|_{\dot{B}_{p,1}^{\frac{N}{p}}}^h\|\theta\|_{\dot{B}_{p,1}^{\frac{N}{p}}}^h\\[1ex]
&\lesssim \mathcal{X}_p(t)
\Big(\|a\|_{\dot{B}_{2,1}^{\frac{N}{2}+1}}^\ell+\|a\|_{\dot{B}_{p,1}^{\frac{N}{p}}}^h
+\|\theta\|_{\dot{B}_{2,1}^{\frac{N}{2}+1}}^\ell+\|\theta\|_{\dot{B}_{p,1}^{\frac{N}{p}}}^h\Big).
\end{split}
\ee
 As a consequence, we end up with
\be\label{5.8}
\|{\bf g}\|_{\dot{B}_{p,1}^{\frac{N}{p}-1}}^h\lesssim\mathcal{X}_p(t)\Big(\|a\|_{\dot{B}_{p,1}^{\frac{N}{p}}}^h
+\|(a,\bv)\|_{\dot{B}_{2,1}^{\frac{N}{2}+1}}^\ell+\|\bv\|_{\dot{B}_{p,1}^{\frac{N}{p}+1}}^h
+\|\theta\|_{\dot{B}_{2,1}^{\frac{N}{2}+1}}^\ell+\|\theta\|_{\dot{B}_{p,1}^{\frac{N}{p}}}^h\Big).
\ee

Let us mention that, in the following estimations, if the term $\|\theta\|_{\dot{B}_{p, 1}^{\frac{N}{p}}}\|a\|_{\dot{B}_{p, 1}^{\frac{N}{p}}}$ appears again, we handle it as in \eqref{5.7}.

We next estimate $\|m\|_{\dot{B}_{p,1}^{\frac{N}{p}-2}}^h$ and recall that
$$
{m}\equ -\bv\cdot\nabla\theta-\beta I(a)\Delta\theta+\frac{Q(\nabla\bv,\nabla\bv)}{1+a}-(K_2(a)+{K}_4(a)\theta){\rm div}\bv.
$$
Using Bony's decomposition, Bernstein's inequality, Propositions \ref{pr2.2} and \ref{pra.4}, and Corollary \ref{co2.1}, we infer that
\be\label{5.9}
\begin{split}
\|\bv\cdot\nabla\theta\|_{\dot{B}_{p,1}^{\frac{N}{p}-2}}^h&\leq \|\bv\cdot\nabla\theta^\ell\|_{\dot{B}_{p,1}^{\frac{N}{p}-2}}^h
+\|\bv\cdot\nabla\theta^h\|_{\dot{B}_{p,1}^{\frac{N}{p}-2}}^h\lesssim \|\bv\cdot\nabla\theta^\ell\|_{\dot{B}_{p,1}^{\frac{N}{p}-1}}^h
+\|\bv\cdot\nabla\theta^h\|_{\dot{B}_{p,1}^{\frac{N}{p}-2}}^h\\[1ex]
&\lesssim\|\bv\|_{\dot{B}_{p,1}^{\frac{N}{p}-1}}\Big(\|\nabla\theta\|_{\dot{B}_{p,1}^{\frac{N}{p}}}^\ell+
\|\nabla\theta\|_{\dot{B}_{p,1}^{\frac{N}{p}-1}}^h\Big)
\lesssim\mathcal{X}_p(t)\Big(\|\theta\|_{\dot{B}_{2,1}^{\frac{N}{2}+1}}^\ell+\|\theta\|_{\dot{B}_{p,1}^{\frac{N}{p}}}^h\Big).
\end{split}
\ee
For the term $I(a)\Delta\theta$,  one has
\be\label{5.10}
\begin{split}
\|I(a)\Delta\theta\|_{\dot{B}_{p,1}^{\frac{N}{p}-2}}^h
&\lesssim\|I(a)\|_{\dot{B}_{p,1}^{\frac{N}{p}}}\|\Delta\theta\|_{\dot{B}_{p,1}^{\frac{N}{p}-2}}\lesssim\|a\|
_{\dot{B}_{p,1}^{\frac{N}{p}}}\|\theta\|_{\dot{B}_{p,1}^{\frac{N}{p}}}\\[1ex]
&\lesssim\mathcal{X}_p(t)
\Big(\|a\|_{\dot{B}_{2,1}^{\frac{N}{2}+1}}^\ell+\|a\|_{\dot{B}_{p,1}^{\frac{N}{p}}}^h
+\|\theta\|_{\dot{B}_{2,1}^{\frac{N}{2}+1}}^\ell+\|\theta\|_{\dot{B}_{p,1}^{\frac{N}{p}}}^h\Big).
\end{split}
\ee
\be\label{5.11}
\begin{split}
&\quad\left\|Q(\nabla v, \nabla v)/(1+a)\right\|_{\dot{B}_{p,1}^{\frac{N}{p}-2}}^h
\lesssim(1+\|a\|_{\dot{B}_{p,1}^{\frac{N}{p}}})\||\nabla\bv|^2\|_{\dot{B}_{p,1}^{\frac{N}{p}-2}}\\[1ex]
&\lesssim(1+\|a\|_{\dot{B}_{p,1}^{\frac{N}{p}}})\|\bv\|_{\dot{B}_{p,1}^{\frac{N}{p}-1}}\|\bv\|_{\dot{B}_{p,1}^{\frac{N}{p}+1}}
\lesssim\mathcal{X}_p(t)\Big(\|\bv\|_{\dot{B}_{p,1}^{\frac{N}{p}+1}}^h
+\|\bv\|_{\dot{B}_{2,1}^{\frac{N}{2}+1}}^\ell\Big).
\end{split}
\ee
\be\label{5.12}
\begin{split}
&\quad\|K_2(a){\rm div}\bv\|_{\dot{B}_{p,1}^{\frac{N}{p}-2}}^h\lesssim\|K_2(a){\rm div}\bv\|_{\dot{B}_{p,1}^{\frac{N}{p}}}^h\lesssim\|K_2(a)\|_{\dot{B}_{p,1}^{\frac{N}{p}}}
\|{\rm div}\bv\|_{\dot{B}_{p,1}^{\frac{N}{p}}}\\[1ex]
&\lesssim\|a\|_{\dot{B}_{p,1}^{\frac{N}{p}}}
\|\bv\|_{\dot{B}_{p,1}^{\frac{N}{p}+1}}\lesssim\Big(\|a\|_{\dot{B}_{p,1}^{\frac{N}{p}}}^h+\|a\|_{\dot{B}_{2,1}^{\frac{N}{2}-1}}^\ell\Big)
\Big(\|\bv\|_{\dot{B}_{p,1}^{\frac{N}{p}+1}}^h
+\|\bv\|_{\dot{B}_{2,1}^{\frac{N}{2}+1}}^\ell\Big)\\[1ex]
&\lesssim\mathcal{X}_p(t)
\Big(\|\bv\|_{\dot{B}_{p,1}^{\frac{N}{p}+1}}^h
+\|\bv\|_{\dot{B}_{2,1}^{\frac{N}{2}+1}}^\ell\Big).
\end{split}
\ee
Split $K_4(a)$ as $K_4(0)+\bar{K}_4(a)$ ( here $\bar{K}_4(a)$ is a smooth function satisfying $\bar{K}_4(0)=0$). Then in the following we estimate the terms $\theta{\rm div}\bv$ and $\bar{K}_4(a)\theta{\rm div}\bv$, respectively.
\be\label{5.12}
\begin{split}
\|\theta{\rm div}\bv\|_{\dot{B}_{p,1}^{\frac{N}{p}-2}}^h&\lesssim\|\theta^\ell{\rm div}\bv\|_{\dot{B}_{p,1}^{\frac{N}{p}-1}}^h+\|\theta^h{\rm div}\bv\|_{\dot{B}_{p,1}^{\frac{N}{p}-2}}^h\\[1ex]
&\lesssim\|\theta\|_{\dot{B}_{p,1}^{\frac{N}{p}-1}}^\ell\|{\rm div}\bv\|_{\dot{B}_{p,1}^{\frac{N}{p}}}
+\|\theta\|_{\dot{B}_{p,1}^{\frac{N}{p}}}^h\|{\rm div}\bv\|_{\dot{B}_{p,1}^{\frac{N}{p}-2}}\\[1ex]
&\lesssim\mathcal{X}_p(t)
\Big(\|\theta\|_{\dot{B}_{p,1}^{\frac{N}{p}}}^h
+\|\bv\|_{\dot{B}_{2,1}^{\frac{N}{2}+1}}^\ell+\|\bv\|_{\dot{B}_{p,1}^{\frac{N}{p}+1}}^h\Big)
\end{split}
\ee
and
\be\label{5.13}
\begin{split}
\|\bar{K}_4(a)\theta{\rm div}\bv\|_{\dot{B}_{p,1}^{\frac{N}{p}-2}}^h&\lesssim\|\bar{K}_4(a)\theta\|_{\dot{B}_{p,1}^{\frac{N}{p}}}\|{\rm div}\bv\|_{\dot{B}_{p,1}^{\frac{N}{p}-2}}\\[1ex]
&\lesssim\|\theta\|_{\dot{B}_{p,1}^{\frac{N}{p}}}\|a\|_{\dot{B}_{p,1}^{\frac{N}{p}}}\|\bv\|_{\dot{B}_{p,1}^{\frac{N}{p}-1}}
\\[1ex]
&\lesssim\mathcal{X}_p(t)
\Big(\|a\|_{\dot{B}_{2,1}^{\frac{N}{2}+1}}^\ell+\|a\|_{\dot{B}_{p,1}^{\frac{N}{p}}}^h
+\|\theta\|_{\dot{B}_{2,1}^{\frac{N}{2}+1}}^\ell+\|\theta\|_{\dot{B}_{p,1}^{\frac{N}{p}}}^h\Big).
\end{split}
\ee
Up to now, we finish the estimates of $m$ and conclude that
\be\label{5.1400}
\|m\|_{\dot{B}_{p,1}^{\frac{N}{p}-2}}^h\lesssim\mathcal{X}_p(t)\Big(\|a\|_{\dot{B}_{p,1}^{\frac{N}{p}}}^h
+\|(a,\bv)\|_{\dot{B}_{2,1}^{\frac{N}{2}+1}}^\ell+\|\bv\|_{\dot{B}_{p,1}^{\frac{N}{p}+1}}^h
+\|\theta\|_{\dot{B}_{2,1}^{\frac{N}{2}+1}}^\ell+\|\theta\|_{\dot{B}_{p,1}^{\frac{N}{p}}}^h\Big).
\ee

Now, we are in a position to bound the low frequency term $\|(f,{\bf g}, {\bf m})\|_{\dot{B}_{2,1}^{\frac{N}{2}-1}}^\ell$ in the right hand of \eqref{5.1}, which
has a little bit more difficult. Let us first introduce the following three inequalities and their proofs are postponed.
\be\label{5.14}
\|T_fg\|_{\dot{B}_{2,1}^{s_1}}\lesssim\|f\|_{\dot{B}_{p,1}^{s_2}}\|{ g}\|_{\dot{B}_{p,1}^{s_1-s_2+\frac{2N}{p}-\frac{N}{2}}}
\ee
if $s_1\in\mathbb{R}, s_2\leq \frac{2N}{p}-\frac{N}{2}$ and $2\leq p\leq 4$.
\be\label{5.15}
\|R(f,g)\|_{\dot{B}_{2,1}^{\frac{N}{2}-1}}\lesssim\|f\|_{\dot{B}_{p,1}^{s}}\|{ g}\|_{\dot{B}_{p,1}^{-s+\frac{2N}{p}-1}}
\ee
if $N\geq 2$  and  $2\leq p\leq 4$. And
\be\label{5.18}
\|R(f,g)\|_{\dot{B}_{2,1}^{\frac{N}{2}-2}}\lesssim\|f\|_{\dot{B}_{p,1}^{s}}\|{ g}\|_{\dot{B}_{p,1}^{-s+\frac{2N}{p}-2}}
\ee
if $N\geq 2$  and  $2\leq p\leq \min\{N,4\}$.

 We claim that
\be\label{5.1300}
\|(f,{\bf g}, { m})\|_{\dot{B}_{2,1}^{\frac{N}{2}-1}}^\ell\lesssim\mathcal{X}_p(t)\Big(\|(a,\bv,\theta)\|_{\dot{B}_{2,1}^{\frac{N}{2}+1}}^\ell
+\|(a, \theta)\|_{\dot{B}_{p,1}^{\frac{N}{p}}}^h+\|\bv\|_{\dot{B}_{p,1}^{\frac{N}{p}+1}}^h\Big).
\ee
Here, we only handle the terms with $\theta$, i.e.,
$$
K_2(a)\nabla\theta,\,\,\theta\nabla K_3(a),\,\,\bv\cdot\nabla\theta,\,\,I(a)\Delta\theta,\,\,K_4(a)\theta{\rm div}\bv
$$
and the term $Q(\nabla\bv, \nabla\bv)/(1+a)$, and the remainder terms can refer to \cite{bie2019optimal, xin2018optimal}. Exploiting Bony's decomposition, Bernstein's inequality, Propositions \ref{pr2.2} and \ref{pra.4}, Corollary \ref{co2.1} and the condition \eqref{1.5} concerning $p$, we could estimate the terms above one by one.
\be\label{5.19}
\begin{split}
&\quad\|K_2(a)\nabla\theta\|_{\dot{B}_{2,1}^{\frac{N}{2}-1}}^\ell\\[1ex]
&\leq \|T_{\nabla\theta}K_2(a)+R(\nabla\theta, K_2(a))\|
_{\dot{B}_{2,1}^{\frac{N}{2}-1}}^\ell+\|T_{K_2(a)}\nabla\theta^\ell\|_{\dot{B}_{2,1}^{\frac{N}{2}-1}}^\ell
+\|T_{K_2(a)}\nabla\theta^h\|_{\dot{B}_{2,1}^{\frac{N}{2}-2}}^\ell\\[1ex]
&\lesssim\|\nabla\theta\|_{\dot{B}_{p, 1}^{\frac{N}{p}-1}}\|K_2(a)\|_{\dot{B}_{p, 1}^{\frac{N}{p}}}
+\|K_2(a)\|_{\dot{B}_{p, 1}^{\frac{N}{p}-1}}\|\nabla\theta\|_{\dot{B}_{p, 1}^{\frac{N}{p}}}^\ell
+\|K_2(a)\|_{\dot{B}_{p, 1}^{\frac{N}{p}-1}}\|\nabla\theta\|_{\dot{B}_{p, 1}^{\frac{N}{p}-1}}^h\\[1ex]
&\lesssim\|\theta\|_{\dot{B}_{p, 1}^{\frac{N}{p}}}\|a\|_{\dot{B}_{p, 1}^{\frac{N}{p}}}
+\|a\|_{\dot{B}_{p, 1}^{\frac{N}{p}-1}}\|\theta\|_{\dot{B}_{p, 1}^{\frac{N}{p}+1}}^\ell
+\|a\|_{\dot{B}_{p, 1}^{\frac{N}{p}-1}}\|\theta\|_{\dot{B}_{p, 1}^{\frac{N}{p}}}^h\\[1ex]
&\lesssim\mathcal{X}_p(t)\Big(\|(a,\theta)\|_{\dot{B}_{2,1}^{\frac{N}{2}+1}}^\ell
+\|(a, \theta)\|_{\dot{B}_{p,1}^{\frac{N}{p}}}^h\Big).
\end{split}
\ee

For the term $\theta\nabla K_3(a)$, we decompose it as
$$\theta\nabla K_3(a)=T_{\nabla K_3(a)}\theta+R(\nabla K_3(a), \theta)+T_\theta(\nabla K_3(a))^h+T_\theta(\nabla K_3(a))^\ell.$$
\be\label{5.20}
\begin{split}
&\quad\|T_{\nabla K_3(a)}\theta+R(\nabla K_3(a), \theta)\|_{\dot{B}_{2,1}^{\frac{N}{2}-1}}^\ell\lesssim\|\nabla K_3(a)\|_{\dot{B}_{p,1}^{\frac{N}{p}-1}}\|\theta\|_{\dot{B}_{p,1}^{\frac{N}{p}}}\\[1ex]
&\lesssim\|a\|_{\dot{B}_{p,1}^{\frac{N}{p}}}\|\theta\|_{\dot{B}_{p,1}^{\frac{N}{p}}}
\lesssim\mathcal{X}_p(t)\Big(\|(a,\theta)\|_{\dot{B}_{2,1}^{\frac{N}{2}+1}}^\ell
+\|(a, \theta)\|_{\dot{B}_{p,1}^{\frac{N}{p}}}^h\Big).
\end{split}
\ee
\be\label{5.21}
\begin{split}
&\quad\|T_{\theta}(\nabla K_3(a))^h\|_{\dot{B}_{2,1}^{\frac{N}{2}-1}}^\ell\lesssim
\|T_{\theta}(\nabla K_3(a))^h\|_{\dot{B}_{2,1}^{\frac{N}{2}-2}}^\ell\\[1ex]
&\lesssim\|\theta\|_{\dot{B}_{p,1}^{\frac{N}{p}-1}}\|\nabla K_3(a)\|_{\dot{B}_{p,1}^{\frac{N}{p}-1}}^h
\lesssim\Big(\|\theta\|_{\dot{B}_{2,1}^{\frac{N}{2}-1}}^\ell+\|\theta\|_{\dot{B}_{p,1}^{\frac{N}{p}}}^h\Big)
\|a\|_{\dot{B}_{p,1}^{\frac{N}{p}}}^h\\[1ex]
&\lesssim\mathcal{X}_p(t)\|(a, \theta)\|_{\dot{B}_{p,1}^{\frac{N}{p}}}^h.
\end{split}
\ee
\be\label{5.22}
\begin{split}
&\quad\|T_{\theta}(\nabla K_3(a))^\ell\|_{\dot{B}_{2,1}^{\frac{N}{2}-1}}^\ell\lesssim
\|T_{\theta^\ell}(\nabla K_3(a))^\ell\|_{\dot{B}_{2,1}^{\frac{N}{2}-1}}^\ell+
\|T_{\theta^h}(\nabla K_3(a))^\ell\|_{\dot{B}_{2,1}^{\frac{N}{2}-1}}^\ell\\[1ex]
&\lesssim\|\theta\|_{\dot{B}_{p,1}^{\frac{N}{p}-1}}^\ell\|\nabla K_3(a)\|_{\dot{B}_{p,1}^{\frac{N}{p}}}^\ell+
\|\theta\|_{L^\infty}^h\|\nabla K_3(a)\|_{\dot{B}_{2,1}^{\frac{N}{2}-1}}^\ell\\[1ex]
&\lesssim\|\theta\|_{\dot{B}_{2,1}^{\frac{N}{2}-1}}^\ell\|a\|_{\dot{B}_{2,1}^{\frac{N}{2}+1}}^\ell
+\|\theta\|_{\dot{B}_{p,1}^{\frac{N}{p}}}^h\|a\|_{\dot{B}_{2,1}^{\frac{N}{2}-1}}^\ell
\\[1ex]
&\lesssim\mathcal{X}_p(t)\Big(\|\theta\|_{\dot{B}_{p,1}^{\frac{N}{p}}}^h+\|a\|_{\dot{B}_{2,1}^{\frac{N}{2}+1}}^\ell\Big).
\end{split}
\ee
Combing \eqref{5.20}, \eqref{5.21} and \eqref{5.22}, we infer that
\be\label{5.23}
\|\theta\nabla K_3(a)\|_{\dot{B}_{2,1}^{\frac{N}{2}-1}}^\ell
\lesssim\mathcal{X}_p(t)\Big(\|(a,\theta)\|_{\dot{B}_{p,1}^{\frac{N}{p}}}^h+\|(a, \theta)\|_{\dot{B}_{2,1}^{\frac{N}{2}+1}}^\ell\Big).
\ee
Regarding the term $\bv\cdot\nabla\theta$, we rewrite it as $\bv\cdot\nabla\theta^\ell+\bv\cdot\nabla\theta^h$ and deduce that
\be\label{5.24}
\begin{split}
\|\bv\cdot\nabla\theta^\ell\|_{\dot{B}_{2,1}^{\frac{N}{2}-1}}^\ell
&\leq\|T_{\bv}\nabla\theta^\ell+R(\bv, \nabla\theta^\ell)\|_{\dot{B}_{2,1}^{\frac{N}{2}-1}}^\ell
+\|T_{\nabla\theta^\ell}\bv\|_{\dot{B}_{2,1}^{\frac{N}{2}-1}}^\ell\\[1ex]
&\lesssim\|\bv\|_{\dot{B}_{p,1}^{\frac{N}{p}-1}}\|\nabla\theta\|_{\dot{B}_{p,1}^{\frac{N}{p}}}^\ell
+\|\nabla\theta\|_{\dot{B}_{p,1}^{\frac{N}{p}-2}}^\ell\|\bv\|_{\dot{B}_{p,1}^{\frac{N}{p}+1}}\\[1ex]
&\lesssim\|\bv\|_{\dot{B}_{p,1}^{\frac{N}{p}-1}}\|\theta\|_{\dot{B}_{2,1}^{\frac{N}{2}+1}}^\ell
+\|\theta\|_{\dot{B}_{2,1}^{\frac{N}{2}-1}}^\ell\|\bv\|_{\dot{B}_{p,1}^{\frac{N}{p}+1}}\\[1ex]
&\lesssim\mathcal{X}_p(t)\Big(\|\bv\|_{\dot{B}_{p,1}^{\frac{N}{p}+1}}^h+\|\bv\|_{\dot{B}_{2,1}^{\frac{N}{2}+1}}^\ell+\| \theta\|_{\dot{B}_{2,1}^{\frac{N}{2}+1}}^\ell\Big)
\end{split}
\ee
and
\be\label{5.25}
\begin{split}
\|\bv\cdot\nabla\theta^h\|_{\dot{B}_{2,1}^{\frac{N}{2}-1}}^\ell
&\leq\|T_{\bv}\nabla\theta^h+R(\bv, \nabla\theta^h)\|_{\dot{B}_{2,1}^{\frac{N}{2}-2}}^\ell
+\|T_{\nabla\theta^h}\bv\|_{\dot{B}_{2,1}^{\frac{N}{2}-1}}^\ell\\[1ex]
&\lesssim\|\bv\|_{\dot{B}_{p,1}^{\frac{N}{p}-1}}\|\nabla\theta\|_{\dot{B}_{p,1}^{\frac{N}{p}-1}}^h
+\|\nabla\theta\|_{\dot{B}_{p,1}^{\frac{N}{p}-1}}^h\|\bv\|_{\dot{B}_{p,1}^{\frac{N}{p}}}^\ell\\[1ex]
&\lesssim\|\bv\|_{\dot{B}_{p,1}^{\frac{N}{p}-1}}\|\theta\|_{\dot{B}_{p,1}^{\frac{N}{p}}}^h
+\|\theta\|_{\dot{B}_{p,1}^{\frac{N}{p}}}^h\|\bv\|_{\dot{B}_{2,1}^{\frac{N}{2}-1}}^\ell\\[1ex]
&\lesssim\mathcal{X}_p(t)\| \theta\|_{\dot{B}_{p,1}^{\frac{N}{p}}}^h.
\end{split}
\ee
As a consequence, we have from from \eqref{5.24} and \eqref{5.25} that
\be\label{5.26}
\|\bv\cdot\nabla\theta\|_{\dot{B}_{2,1}^{\frac{N}{2}-1}}^\ell
\lesssim\mathcal{X}_p(t)\Big(\|\theta\|_{\dot{B}_{p,1}^{\frac{N}{p}}}^h+\| \theta\|_{\dot{B}_{2,1}^{\frac{N}{2}+1}}^\ell
+\|\bv\|_{\dot{B}_{p,1}^{\frac{N}{p}+1}}^h+\|\bv\|_{\dot{B}_{2,1}^{\frac{N}{2}+1}}^\ell\Big).
\ee

For $I(a)\Delta\theta=I(a)\Delta\theta^h+I(a)\Delta\theta^\ell$, we have
\be\label{5.27}
\begin{split}
\|I(a)\Delta\theta^h\|_{\dot{B}_{2,1}^{\frac{N}{2}-1}}^\ell
&\leq\|T_{I(a)}\Delta\theta^h\|_{\dot{B}_{2,1}^{\frac{N}{2}-3}}^\ell+\|R(I(a), \Delta\theta^h)+T_{\Delta\theta^h}I(a)\|_{\dot{B}_{2,1}^{\frac{N}{2}-2}}^\ell
\\[1ex]
&\lesssim\|a\|_{\dot{B}_{p,1}^{\frac{N}{p}-1}}\|\Delta\theta\|_{\dot{B}_{p,1}^{\frac{N}{p}-2}}^h+
\|a\|_{\dot{B}_{p,1}^{\frac{N}{p}}}\|\Delta\theta\|_{\dot{B}_{p,1}^{\frac{N}{p}-2}}^h\\[1ex]
&\lesssim\|a\|_{\dot{B}_{p,1}^{\frac{N}{p}-1}}\|\theta\|_{\dot{B}_{p,1}^{\frac{N}{p}}}^h
+\|\theta\|_{\dot{B}_{p,1}^{\frac{N}{p}}}^h\|a\|_{\dot{B}_{p,1}^{\frac{N}{p}}}\\[1ex]
&\lesssim\mathcal{X}_p(t)\| \theta\|_{\dot{B}_{p,1}^{\frac{N}{p}}}^h
\end{split}
\ee
and
\be\label{5.27}
\begin{split}
\|I(a)\Delta\theta^\ell\|_{\dot{B}_{2,1}^{\frac{N}{2}-1}}^\ell
&\leq\|T_{I(a)}\Delta\theta^\ell\|_{\dot{B}_{2,1}^{\frac{N}{2}-2}}^\ell+\|R(I(a), \Delta\theta^\ell)+T_{\Delta\theta^\ell}I(a)\|_{\dot{B}_{2,1}^{\frac{N}{2}-1}}^\ell\\[1ex]
&\lesssim\|a\|_{\dot{B}_{p,1}^{\frac{N}{p}-1}}\|\Delta\theta\|_{\dot{B}_{p,1}^{\frac{N}{p}-1}}^\ell+
\|a\|_{\dot{B}_{p,1}^{\frac{N}{p}}}\|\Delta\theta\|_{\dot{B}_{p,1}^{\frac{N}{p}-1}}^\ell\\[1ex]
&\lesssim\|a\|_{\dot{B}_{p,1}^{\frac{N}{p}-1}}\|\theta\|_{\dot{B}_{2,1}^{\frac{N}{2}+1}}^\ell
+\|\theta\|_{\dot{B}_{2,1}^{\frac{N}{2}+1}}^\ell\|a\|_{\dot{B}_{p,1}^{\frac{N}{p}}}\\[1ex]
&\lesssim\mathcal{X}_p(t)\| \theta\|_{\dot{B}_{2,1}^{\frac{N}{2}+1}}^\ell
\end{split}
\ee
which together with \eqref{5.26} yields that
\be\label{5.28}
\|I(a)\Delta\theta\|_{\dot{B}_{2,1}^{\frac{N}{2}-1}}^\ell
\lesssim\mathcal{X}_p(t)\Big(\|\theta\|_{\dot{B}_{p,1}^{\frac{N}{p}}}^h+\| \theta\|_{\dot{B}_{2,1}^{\frac{N}{2}+1}}^\ell\Big).
\ee

For the term $K_4(a)\theta{\rm div}\bv$, as before we also decompose $K_4(a)=K_4(0)+\bar{K}_4(a)$ since $K_4(0)\neq 0$, where $\bar{K}_4(a)$ is a smooth function satisfying $\bar{K}_4(0)=0$.
\be\label{5.29}
\begin{split}
  \|\theta{\rm div}\bv\|_{\dot{B}_{2,1}^{\frac{N}{2}-1}}^\ell&\lesssim\|T_\theta{\rm div}\bv+R(\theta, {\rm div}\bv)\|
  _{\dot{B}_{2,1}^{\frac{N}{2}-1}}^\ell+\|T_{{\rm div}\bv}\theta^h\|_{\dot{B}_{2,1}^{\frac{N}{2}-2}}^\ell+
  \|T_{{\rm div}\bv}\theta^\ell\|_{\dot{B}_{2,1}^{\frac{N}{2}-1}}^\ell\\[1ex]
  &\lesssim\|\theta\|_{\dot{B}_{p,1}^{\frac{N}{p}-1}}\|{\rm div}\bv\|_{\dot{B}_{p,1}^{\frac{N}{p}}}
  +\|{\rm div}\bv\|_{\dot{B}_{p,1}^{\frac{N}{p}-2}}\|\theta\|_{\dot{B}_{p,1}^{\frac{N}{p}}}^h
  +\|{\rm div}\bv\|_{L^\infty}\|\theta\|_{\dot{B}_{2,1}^{\frac{N}{2}-1}}^\ell\\[1ex]
  &\lesssim\|\theta\|_{\dot{B}_{p,1}^{\frac{N}{p}-1}}\|\bv\|_{\dot{B}_{p,1}^{\frac{N}{p}+1}}
  +\|\bv\|_{\dot{B}_{p,1}^{\frac{N}{p}-1}}\|\theta\|_{\dot{B}_{p,1}^{\frac{N}{p}}}^h
  +\|\bv\|_{\dot{B}_{p,1}^{\frac{N}{p}+1}}\|\theta\|_{\dot{B}_{2,1}^{\frac{N}{2}-1}}^\ell\\[1ex]
  &\lesssim\mathcal{X}_p(t)\Big(\|\bv\|_{\dot{B}_{p,1}^{\frac{N}{p}+1}}
  +\|\theta\|_{\dot{B}_{p,1}^{\frac{N}{p}}}^h\Big),
\end{split}
\ee
and
\be\label{5.30}
\begin{split}
  &\quad\|\bar{K}_4(a)\theta{\rm div}\bv\|_{\dot{B}_{2,1}^{\frac{N}{2}-1}}^\ell\lesssim\|\bar{K}_4(a)(\theta{\rm div}\bv)^\ell\|_{\dot{B}_{2,1}^{\frac{N}{2}-1}}^\ell+\|\bar{K}_4(a)(\theta{\rm div}\bv)^h\|_{\dot{B}_{2,1}^{\frac{N}{2}-1}}^\ell\\[1ex]
  &\lesssim\|\bar{K}_4(a)\|_{L^\infty}\|\theta{\rm div}\bv\|_{\dot{B}_{2,1}^{\frac{N}{2}-1}}^\ell
  +\|T_{\bar{K}_4(a)}(\theta{\rm div}\bv)^h\|_{\dot{B}_{2,1}^{\frac{N}{2}-3}}^\ell\\[1ex]
  &\quad\quad\quad\quad\quad\quad\quad\quad\quad\quad\quad\quad
  +\|T_{(\theta{\rm div}\bv)^h}\bar{K}_4(a)+R((\theta{\rm div}\bv)^h, \bar{K}_4(a))\|_{\dot{B}_{2,1}^{\frac{N}{2}-2}}^\ell\\[1ex]
  &\lesssim\|a\|_{\dot{B}_{p,1}^{\frac{N}{p}}}\|\theta{\rm div}\bv\|_{\dot{B}_{2,1}^{\frac{N}{2}-1}}^\ell
  +\|a\|_{\dot{B}_{p,1}^{\frac{N}{p}-1}}\|\theta{\rm div}\bv\|_{\dot{B}_{p,1}^{\frac{N}{p}-2}}^h+
  \|a\|_{\dot{B}_{p,1}^{\frac{N}{p}}}\|\theta{\rm div}\bv\|_{\dot{B}_{p,1}^{\frac{N}{p}-2}}^h\\[1ex]
&\lesssim\mathcal{X}_p(t)
\Big(\|\theta\|_{\dot{B}_{p,1}^{\frac{N}{p}}}^h
+\|\bv\|_{\dot{B}_{2,1}^{\frac{N}{2}+1}}^\ell+\|\bv\|_{\dot{B}_{p,1}^{\frac{N}{p}+1}}^h\Big),
  \end{split}
\ee
where in the last inequality we have used the results of \eqref{5.12} and \eqref{5.29}. Consequently, thanks to \eqref{5.29}
and \eqref{5.30}, we get
\be\label{5.31}
\|{K}_4(a)\theta{\rm div}\bv\|_{\dot{B}_{2,1}^{\frac{N}{2}-1}}^\ell
\lesssim\mathcal{X}_p(t)
\Big(\|\theta\|_{\dot{B}_{p,1}^{\frac{N}{p}}}^h
+\|\bv\|_{\dot{B}_{2,1}^{\frac{N}{2}+1}}^\ell+\|\bv\|_{\dot{B}_{p,1}^{\frac{N}{p}+1}}^h\Big).
\ee

Finally, we deal with the term $Q(\nabla\bv, \nabla\bv)/(1+a)=(1-I(a))Q(\nabla\bv, \nabla\bv)$.
\be\label{5.32}
\begin{split}
\|Q(\nabla\bv,\nabla\bv)\|_{\dot{B}_{2,1}^{\frac{N}{2}-1}}^\ell&\lesssim
\|Q(\nabla\bv,\nabla\bv)\|_{\dot{B}_{2,1}^{\frac{N}{2}-2}}^\ell\lesssim\|\nabla\bv\|_{\dot{B}_{p,1}^{\frac{N}{p}-1}}
\|\nabla\bv\|_{\dot{B}_{p,1}^{\frac{N}{p}-1}}\\[1ex]
&\lesssim\|\bv\|_{\dot{B}_{p,1}^{\frac{N}{p}}}^2
\lesssim\|\bv\|_{\dot{B}_{p,1}^{\frac{N}{p}-1}}\|\bv\|_{\dot{B}_{p,1}^{\frac{N}{p}+1}}
\lesssim\mathcal{X}_p(t)\|\bv\|_{\dot{B}_{p,1}^{\frac{N}{p}+1}}
\end{split}
\ee
and
\be\label{5.33}
\begin{split}
&\quad\|I(a)Q(\nabla\bv,\nabla\bv)\|_{\dot{B}_{2,1}^{\frac{N}{2}-1}}^\ell\\[1ex]
&\lesssim\|T_{I(a)}Q(\nabla\bv,\nabla\bv)+R(I(a), Q(\nabla\bv,\nabla\bv))\|_{\dot{B}_{2,1}^{\frac{N}{2}-2}}^\ell
+\|T_{Q(\nabla\bv,\nabla\bv)}I(a)\|_{\dot{B}_{2,1}^{\frac{N}{2}-1}}^\ell\\[1ex]
&\lesssim\|I(a)\|_{\dot{B}_{p,1}^{\frac{N}{p}-1}}\||\nabla\bv|^2\|_{\dot{B}_{p,1}^{\frac{N}{p}-1}}
+\||\nabla\bv|^2\|_{\dot{B}_{p,1}^{\frac{N}{p}-1}}\|I(a)\|_{\dot{B}_{p,1}^{\frac{N}{p}}}\\[1ex]
&\lesssim\|a\|_{\dot{B}_{p,1}^{\frac{N}{p}-1}}\||\bv\|_{\dot{B}_{p,1}^{\frac{N}{p}}}^2
+\|\bv\|_{\dot{B}_{p,1}^{\frac{N}{p}}}^2\|a\|_{\dot{B}_{p,1}^{\frac{N}{p}}}
\lesssim\mathcal{X}_p(t)\|\bv\|_{\dot{B}_{p,1}^{\frac{N}{p}+1}}
\end{split}
\ee
which together with \eqref{5.32} gives
\be\label{5.34}
\|Q(\nabla\bv,\nabla\bv)/(1+a)\|_{\dot{B}_{2,1}^{\frac{N}{2}-1}}^\ell
\lesssim\mathcal{X}_p(t)
\|\bv\|_{\dot{B}_{p,1}^{\frac{N}{p}+1}}.
\ee
Up to now, we deduce that \eqref{5.1300} holds true.

Plugging \eqref{5.2}, \eqref{5.3}, \eqref{5.8}, \eqref{5.1400} and \eqref{5.1300}
into \eqref{5.1} and applying the fact that $\mathcal{X}_p(t)\lesssim\mathcal{X}_{p,0}\ll 1$ for all $t\geq 0$, we end up with
\be\label{5.39}
\begin{split}
  &\quad\frac{d}{dt}\Big(\|(a, \bv, \theta)\|_{\dot{B}_{2,1}^{\frac{N}{2}-1}}^\ell+\|(\nabla a, \bv)\|_{\dot{B}_{p,1}^{\frac{N}{p}-1}}^h+\|\theta\|_{\dot{B}_{p,1}^{\frac{N}{p}-2}}^h\Big)\\[1ex]
  &\quad\quad\quad\quad\quad\quad\quad\quad\quad\quad\quad\quad\quad\quad
  +\Big(\|(a, \bv, \theta)\|_{\dot{B}_{2,1}^{\frac{N}{2}+1}}^\ell
  +\|(a, \theta)\|_{\dot{B}_{p,1}^{\frac{N}{p}}}^h+\|\bv\|_{\dot{B}_{p,1}^{\frac{N}{p}+1}}^h\Big)\leq 0.
\end{split}
\ee

In what follows, we will employ  the following interpolation inequalities:
\begin{prop}\label{pra.7}(\cite{xin2018optimal})
Suppose that $m\neq \rho$. Then it holds that
$$
\|f\|_{\dot{B}_{p,1}^j}^\ell\lesssim (\|f\|_{\dot{B}_{r,\infty}^m}^\ell)^{1-\eta}(\|f\|_{\dot{B}_{r,\infty}^{\rho}}^\ell)^{\eta},\,\,\,\,\,\,
\|f\|_{\dot{B}_{p,1}^j}^h\lesssim (\|f\|_{\dot{B}_{r,\infty}^m}^h)^{1-\eta}(\|f\|_{\dot{B}_{r,\infty}^{\rho}}^h)^{\eta}
$$
where $j+N(\frac{1}{r}-\frac{1}{p})=m(1-\eta)+\rho\eta$ for $0<\eta<1$ and $1\leq r\leq p\leq \infty$.
\end{prop}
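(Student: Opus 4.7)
\textbf{Plan for the proof of Proposition \ref{pra.7}.} My plan is to combine Bernstein's inequality with a classical dyadic splitting that is optimized in a cutoff frequency. Without loss of generality I would assume $m<\rho$; the hypothesis $j+N(\tfrac{1}{r}-\tfrac{1}{p})=m(1-\eta)+\rho\eta$ with $0<\eta<1$ then forces $\sigma\equ j+N(\tfrac{1}{r}-\tfrac{1}{p})$ to lie strictly between $m$ and $\rho$, and this strict separation is the structural feature that makes the geometric tails converge on both sides of any chosen cutoff.

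First I would invoke Bernstein's inequality (Lemma \ref{le2.1}) block-by-block: since $r\le p$, one has $\|\dot\Delta_k f\|_{L^p}\lesssim 2^{kN(1/r-1/p)}\|\dot\Delta_k f\|_{L^r}$. Plugging this into the low-frequency seminorm gives
\[
\|f\|_{\dot{B}_{p,1}^j}^\ell \;\lesssim\; \sum_{k\le k_0} 2^{k\sigma}\,\|\dot\Delta_k f\|_{L^r}.
\]
I would then split this sum at an integer threshold $N^{\star}\le k_0$. On $k\le N^{\star}$, factoring $2^{k\sigma}=2^{k(\sigma-m)}\cdot 2^{km}$ and pulling out the $\ell^\infty$ control $2^{km}\|\dot\Delta_k f\|_{L^r}\le \|f\|_{\dot{B}_{r,\infty}^m}^\ell$ leaves a geometric series that converges (because $\sigma>m$) to $\lesssim 2^{N^{\star}(\sigma-m)}\|f\|_{\dot{B}_{r,\infty}^m}^\ell$. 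On $N^{\star}<k\le k_0$, the analogous factorization $2^{k\sigma}=2^{k(\sigma-\rho)}\cdot 2^{k\rho}$ together with $\sigma<\rho$ yields $\lesssim 2^{N^{\star}(\sigma-\rho)}\|f\|_{\dot{B}_{r,\infty}^\rho}^\ell$.

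Optimizing in $N^{\star}$, the real balance point satisfies $2^{N^{\star}(\rho-m)}\sim \|f\|_{\dot{B}_{r,\infty}^\rho}^\ell/\|f\|_{\dot{B}_{r,\infty}^m}^\ell$, and the identities $\sigma-m=\eta(\rho-m)$, $\sigma-\rho=-(1-\eta)(\rho-m)$ then produce exactly the announced interpolation bound $(\|f\|_{\dot{B}_{r,\infty}^m}^\ell)^{1-\eta}(\|f\|_{\dot{B}_{r,\infty}^\rho}^\ell)^{\eta}$. The high-frequency statement would be obtained by applying the identical recipe to the dyadic sum over $k\ge k_0+1$, split at some $N^{\star}\ge k_0+1$: the same two convergence conditions $\sigma>m$ and $\sigma<\rho$ suffice, and the balancing delivers the analogous bound with $\|\cdot\|^h$ throughout.

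The only mild obstacle will be boundary bookkeeping: if the optimal $N^\star$ falls outside the admissible index range (for instance when one of the two right-hand side seminorms vanishes or dominates the other by too much), one half of the split is empty and the conclusion has to be recovered from the remaining half alone, together with a trivial monotone comparison of localized Besov seminorms at the same integrability exponent (e.g.\ $\sum_{k\le k_0}2^{k(\sigma-m)}\lesssim 2^{k_0(\sigma-m)}$ when $\sigma>m$). I expect this case-check to be routine; the substance of the argument is the Bernstein reduction together with the two-sided geometric-series trick.
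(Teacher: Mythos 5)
The paper does not actually prove Proposition \ref{pra.7}: it is quoted from \cite{xin2018optimal} and used as a black box, so there is no in-paper argument to compare against. Your proposal is a correct, self-contained proof, and it is the standard one for this type of statement: the block-wise Bernstein embedding $\|\dot{\Delta}_k f\|_{L^p}\lesssim 2^{kN(\frac{1}{r}-\frac{1}{p})}\|\dot{\Delta}_k f\|_{L^r}$ (valid since $r\le p$) reduces matters to bounding $\sum 2^{k\sigma}\|\dot{\Delta}_k f\|_{L^r}$ with $\sigma=j+N(\frac{1}{r}-\frac{1}{p})$ strictly between $m$ and $\rho$, and the split of the dyadic sum at a cutoff, with both geometric tails convergent and the cutoff balanced via $\sigma-m=\eta(\rho-m)$ and $\sigma-\rho=-(1-\eta)(\rho-m)$, gives exactly the exponents $1-\eta$ and $\eta$. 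The one point you should write out rather than merely flag is the boundary case: if the balancing cutoff exits the admissible range (exceeds $k_0$ for the low-frequency bound, or falls below $k_0+1$ for the high-frequency bound), the remaining one-sided estimate, e.g.\ $2^{k_0(\sigma-m)}\|f\|_{\dot{B}_{r,\infty}^m}^\ell$, is still dominated by $(\|f\|_{\dot{B}_{r,\infty}^m}^\ell)^{1-\eta}(\|f\|_{\dot{B}_{r,\infty}^{\rho}}^\ell)^{\eta}$ precisely because the threshold inequality defining that case gives $\|f\|_{\dot{B}_{r,\infty}^m}^\ell\lesssim 2^{-k_0(\rho-m)}\|f\|_{\dot{B}_{r,\infty}^{\rho}}^\ell$ (and symmetrically in the other regime); this is the routine verification you anticipate, and with it the argument is complete, the restriction to $k\le k_0$ or $k\ge k_0+1$ being harmless since the whole proof is a purely dyadic-sum manipulation.
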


Due to $-\sigma_1<\frac{N}{2}-2<\frac{N}{2}-1\leq \frac{N}{p}<\frac{N}{2}+1$, it follows from Proposition \ref{pra.7} that
\be\label{5.40}
\|(a,\bv,\theta)\|_{\dot{B}_{2,1}^{\frac{N}{2}-1}}^\ell
\leq C\Big(\|(a,\bv,\theta)\|_{\dot{B}_{2,\infty}^{-\sigma_1}}^\ell\Big)^{\eta_0}
\Big(\|(a,\bv,\theta)\|_{\dot{B}_{2,\infty}^{\frac{N}{2}+1}}^\ell\Big)^{1-\eta_0},
\ee
where $\eta_0=\frac{2}{N/2+1+\sigma_1}\in (0,1)$. In view of \eqref{4.74}, we have
$$
\|(a,\bv,\theta)\|_{\dot{B}_{2,\infty}^{\frac{N}{2}+1}}^\ell\geq c_0\Big(\|(a,\bv,\theta)\|_{\dot{B}_{2,1}^{\frac{N}{2}-1}}^\ell\Big)^{\frac{1}{1-\eta_0}},
$$
where $c_0=C^{-\frac{1}{1-\eta_0}}C_0^{-\frac{\eta_0}{1-\eta_0}}$.

Moreover, it follows from the fact
$\|a\|_{\dot{B}_{p,1}^{\frac{N}{p}}}^h+\|\bv\|_{\dot{B}_{p,1}^{\frac{N}{p}-1}}^h+\|\theta\|_{\dot{B}_{p,1}^{\frac{N}{p}-2}}^h\leq \mathcal{X}_p(t)\lesssim \mathcal{X}_{p,0}\ll 1$
for all $t\geq 0$ that
$$
\Big(\|a\|_{\dot{B}_{p,1}^{\frac{N}{p}}}^h\Big)^{\frac{1}{1-\eta_0}}\lesssim\|a\|_{\dot{B}_{p,1}^{\frac{N}{p}}}^h,\,\,\,\,\,
\Big(\|\bv\|_{\dot{B}_{p,1}^{\frac{N}{p}-1}}^h\Big)^{\frac{1}{1-\eta_0}}\lesssim
\|\bv\|_{\dot{B}_{p,1}^{\frac{N}{p}+1}}^h\,\,\,\,{\rm and}\,\,\,\Big(\|\theta\|_{\dot{B}_{p,1}^{\frac{N}{p}-2}}^h\Big)^{\frac{1}{1-\eta_0}}\lesssim
\|\theta\|_{\dot{B}_{p,1}^{\frac{N}{p}}}^h.
$$
Thus, there exists a constant $\tilde{c}_0>0$ such that the following Lyapunov-type inequality holds:
{\small\begin{equation}\label{5.41}
\begin{split}
&\quad\frac{d}{dt}\Big(\|(a,\bv,\theta)(t)\|_{\dot{B}_{2,1}^{\frac{N}{2}-1}}^\ell+\|a(t)\|_{\dot{B}_{p,1}^{\frac{N}{p}}}^h
+\|\bv(t)\|_{\dot{B}_{p,1}^{\frac{N}{p}-1}}^h+\|\theta(t)\|_{\dot{B}_{p,1}^{\frac{N}{p}-2}}^h\Big)\\[1ex]
&+\Big(\|(a,\bv,\theta)(t)\|_{\dot{B}_{2,1}^{\frac{N}{2}-1}}^\ell+\|a(t)\|_{\dot{B}_{p,1}^{\frac{N}{p}}}^h
+\|\bv(t)\|_{\dot{B}_{p,1}^{\frac{N}{p}-1}}^h
+\|\theta(t)\|_{\dot{B}_{p,1}^{\frac{N}{p}-2}}^h\Big)^{1+\frac{2}{N/2-1+\sigma_1}}d\tau\lesssim\mathcal{X}_{p,0}.
\end{split}
\end{equation}}
Solving \eqref{5.41}  yields
\be\label{5.42}
\begin{split}
&\quad\|(a,\bv,\theta)(t)\|_{\dot{B}_{2,1}^{\frac{N}{2}-1}}^\ell+\|a(t)\|_{\dot{B}_{p,1}^{\frac{N}{p}}}^h
+\|\bv(t)\|_{\dot{B}_{p,1}^{\frac{N}{p}-1}}^h+\|\theta(t)\|_{\dot{B}_{p,1}^{\frac{N}{p}-2}}^h\\[1ex]
&\lesssim\Big(\mathcal{X}_{p,0}^{-\frac{2}{N/2-1+\sigma_1}}
+\frac{2t}{N/2-1+\sigma_1}\Big)^{-\frac{N/2-1+\sigma_1}{2}}
\lesssim (1+t)^{-\frac{N/2-1+\sigma_1}{2}}
\end{split}
\ee
for all $t\geq 0$.
Resorting to the embedding properties in Proposition \ref{pr2.1}, we arrive at
\be\label{5.43}
\begin{split}
\|(a, \bv)(t)\|_{\dot{B}_{p,1}^{\frac{N}{p}-1}}\lesssim\|(a, \bv)(t)\|_{\dot{B}_{2,1}^{\frac{N}{2}-1}}^\ell+\|(\nabla a, \bv)(t)\|_{\dot{B}_{p,1}^{\frac{N}{p}-1}}^h
\lesssim (1+t)^{-\frac{N/2-1+\sigma_1}{2}}.
\end{split}
\ee
In addition, employing Proposition
\ref{pra.7} again yields  for $\sigma_2\in (-\sigma_1-N(\frac{1}{2}-\frac{1}{p}), \frac{N}{p}-1)$ that
\be\label{5.44}
\begin{split}
\|(a, \bv, \theta)(t)\|_{\dot{B}_{p,1}^{\sigma_2}}^\ell\lesssim\|(a, \bv, \theta)(t)\|_{\dot{B}_{2,1}^{\sigma_2+N(\frac{1}{2}-\frac{1}{p})}}^\ell
\lesssim\Big(\|(a,\bv,\theta)\|_{\dot{B}_{2,\infty}^{-\sigma_1}}^\ell\Big)^{\eta_1}
\Big(\|(a,\bv,\theta)\|_{\dot{B}_{2,\infty}^{\frac{N}{2}-1}}^\ell\Big)^{1-\eta_1},
\end{split}
\ee
where
$$
\eta_1=\frac{\frac{N}{p}-1-\sigma_2}{\frac{N}{2}-1+\sigma_1}\in(0,1).
$$
Note that
$$
\|(a,\bv,\theta)\|_{\dot{B}_{2,\infty}^{-\sigma_1}}^\ell\leq C_0
$$
for all $t\geq 0$.  From \eqref{5.42} and \eqref{5.44}, we deduce that
\be\label{5.45}
\begin{split}
\|(a, \bv, \theta)(t)\|_{\dot{B}_{p,1}^{\sigma_2}}^\ell\lesssim
\Big[(1+t)^{-\frac{N/2-1+\sigma_1}{2}}\Big]^{1-\eta_1}
=(1+t)^{-\frac{N}{2}(\frac{1}{2}-\frac{1}{p})-\frac{\sigma_2+\sigma_1}{2}}
\end{split}
\ee
for all $t\geq 0$, which leads to
\be\label{5.46}
\begin{split}
\|(a, \bv)(t)\|_{\dot{B}_{p,1}^{\sigma_2}}\lesssim
\|(a, \bv)(t)\|_{\dot{B}_{p,1}^{\sigma_2}}^\ell+\|(a, \bv)(t)\|_{\dot{B}_{p,1}^{\sigma_2}}^h
\lesssim (1+t)^{-\frac{N}{2}(\frac{1}{2}-\frac{1}{p})-\frac{\sigma_2+\sigma_1}{2}},
\end{split}
\ee
provided that $\sigma_2\in (-\sigma_1-N(\frac{1}{2}-\frac{1}{p}), \frac{N}{p}-1)$. This together with \eqref{5.43} yields \eqref{1}.

Similarly, for
$\sigma_3\in (-\sigma_1-N(\frac{1}{2}-\frac{1}{p}), \frac{N}{p}-2]$, we could get
\be\nonumber
\begin{split}
\|\theta(t)\|_{\dot{B}_{p,1}^{\sigma_3}}\lesssim
\|\theta(t)\|_{\dot{B}_{p,1}^{\sigma_3}}^\ell+\|\theta(t)\|_{\dot{B}_{p,1}^{\sigma_3}}^h
\lesssim (1+t)^{-\frac{N}{2}(\frac{1}{2}-\frac{1}{p})-\frac{\sigma_3+\sigma_1}{2}},
\end{split}
\ee
which yields \eqref{2}. So far, the proof of Theorem \ref{th2} is completed.

In the following, we give the proofs of inequalities \eqref{5.14}, \eqref{5.15} and \eqref{5.18}.
\begin{proof}[Proof of \eqref{5.14}]
Set $\frac{1}{p^\ast}+\frac{1}{p}=1$ and $\|c(j)\|_{l^1}=1$. From the definition of $T_fg$, we obtain
\begin{equation}\label{5.16}
\begin{split}
&\quad\|\dot{\Delta}_j(T_fg)\|_{L^2}\\[1ex]
&\leq \sum_{|k-j|\leq 4}\sum_{k^\prime\leq k-2}\|\dot{\Delta}_j(\dot{\Delta}_{k^\prime}f\dot{\Delta}_kg)\|_{L^2}\leq\sum_{|k-j|\leq 4}\sum_{k^\prime\leq k-2}\|\dot{\Delta}_{k^\prime}f\|_{L^{p^\ast}}\|\dot{\Delta}_kg\|_{L^p}\\[1ex]
&\lesssim\sum_{|k-j|\leq 4}\sum_{k^\prime\leq k-2}2^{k^\prime(\frac{2N}{p}-\frac{N}{2})}\|\dot{\Delta}_{k^\prime}f\|_{L^p}\|\dot{\Delta}_kg\|_{L^p}\\[1ex]
&\lesssim\sum_{|k-j|\leq 4}\sum_{k^\prime\leq k-2}2^{k^\prime(\frac{2N}{p}-\frac{N}{2}-s_2)}2^{k^\prime s_2}
\|\dot{\Delta}_{k^\prime}f\|_{L^p}2^{k(-s_1+s_2-\frac{2N}{p}+\frac{N}{2})}2^{k(s_1-s_2+\frac{2N}{p}-\frac{N}{2})}
\|\dot{\Delta}_kg\|_{L^p}\\[1ex]
&\lesssim c(j)2^{j(-s_1)}
\|f\|_{\dot{B}_{p,1}^{s_2}}\|g\|_{\dot{B}_{p,1}^{s_1-s_2+\frac{2N}{p}-\frac{N}{2}}},
\end{split}
\end{equation}
which yields \eqref{5.14}. Where we used that $p^\ast\geq p$ if $2\leq p\leq 4$ in the third inequality, and the condition $\frac{2N}{p}-\frac{N}{2}-s_2\geq 0$  in the last inequality.
\end{proof}

\begin{proof}[Proof of \eqref{5.15} and \eqref{5.18}] We only prove \eqref{5.18} and the proof of \eqref{5.15} is similar. It follows from  the definition of $R(f,g)$ that
\begin{equation}\label{5.17}
\begin{split}
&\quad\|\dot{\Delta}_jR(f,g)\|_{L^2}\leq \sum_{k\geq j-3}\sum_{|k-k^\prime|\leq 1}
\|\dot{\Delta}_j(\dot{\Delta}_{k}f\dot{\Delta}_{k^\prime}g)\|_{L^2}\\[1ex]
&\lesssim 2^{j(\frac{2N}{p}-\frac{N}{2})}\sum_{k\geq j-3}\sum_{|k-k^\prime|\leq 1}2^{k(-s)}2^{ks}\|\dot{\Delta}_{k}f\|_{L^{p}}2^{k^\prime(s-\frac{2N}{p}+2)}
2^{k^\prime(-s+\frac{2N}{p}-2)}\|\dot{\Delta}_{k^\prime}g\|_{L^{p}}\\[1ex]
&\lesssim2^{j(\frac{2N}{p}-\frac{N}{2})}\sum_{k\geq j-3}2^{k(-\frac{2N}{p}+2)}c^2(k)\|f\|_{\dot{B}_{p,1}^{s}}
\|g\|_{\dot{B}_{p,1}^{-s+\frac{2N}{p}-2}}\\[1ex]
&\lesssim c(j)2^{j(2-\frac{N}{2})}\|f\|_{\dot{B}_{p,1}^{s}}
\|g\|_{\dot{B}_{p,1}^{-s+\frac{2N}{p}-1}},
\end{split}
\end{equation}
which yields \eqref{5.18}. Where we used that $1\leq \frac{p}{2}\leq 2$  in the second inequality and $2-\frac{2N}{p}\leq 0$ in the last inequality.
\end{proof}

\subsection{Proof of Corollary \ref{col1}} In fact, Corollary \ref{col1} can be regarded as the direct consequence of the following interpolation inequality:
\begin{prop}\label{pra.8}(\cite{bahouri2011fourier})
The following interpolation inequality holds true:
$$
\|\Lambda^lf\|_{L^r}\lesssim \|\Lambda^mf\|_{L^q}^{1-\eta}\|\Lambda^kf\|_{L^q}^\eta,
$$
whenever $0\leq \eta\leq 1, 1\leq q\leq r\leq \infty$ and
$$
l+N\Big(\frac{1}{q}-\frac{1}{r}\Big)=m(1-\eta)+k\eta.
$$
\end{prop}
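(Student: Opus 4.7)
The plan is to derive this Gagliardo--Nirenberg type inequality directly from the homogeneous Littlewood--Paley decomposition combined with repeated use of Bernstein's inequality (Lemma \ref{le2.1}). The degenerate case $m=k$ is immediate: the scaling relation forces $\alpha := l + N(\frac{1}{q}-\frac{1}{r}) = m$, so $\|\Lambda^l f\|_{L^r} \lesssim \|\Lambda^m f\|_{L^q}$ follows dyadic block by dyadic block from Bernstein. I therefore assume $m \ne k$ and, by symmetry, $m<k$; the boundary cases $\eta\in\{0,1\}$ are again direct consequences of Bernstein alone, so I focus on $\eta\in(0,1)$.

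For each $j\in\mathbb{Z}$, since $\dot\Delta_j\Lambda^l f$ is Fourier-localized in the annulus of radius $\sim 2^j$, two applications of Bernstein (one to replace $\Lambda^l$ by a factor $2^{jl}$, one to upgrade $L^q$ to $L^r$) give
$$\|\dot\Delta_j\Lambda^l f\|_{L^r} \lesssim 2^{jl}\|\dot\Delta_j f\|_{L^r} \lesssim 2^{j[l + N(1/q - 1/r)]}\|\dot\Delta_j f\|_{L^q} = 2^{j\alpha}\|\dot\Delta_j f\|_{L^q}.$$
In parallel, Bernstein used in the opposite direction produces both bounds $\|\dot\Delta_j f\|_{L^q}\lesssim 2^{-jm}\|\Lambda^m f\|_{L^q}$ and $\|\dot\Delta_j f\|_{L^q}\lesssim 2^{-jk}\|\Lambda^k f\|_{L^q}$. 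For an integer $j_0$ to be fixed below, I apply the first bound on low-frequency blocks ($j\le j_0$) and the second on high-frequency blocks ($j>j_0$). Because $\alpha = (1-\eta)m + \eta k$ satisfies $m<\alpha<k$, both resulting tails are geometric and convergent, so triangle inequality together with $\|\Lambda^l f\|_{L^r}\le\sum_j\|\dot\Delta_j\Lambda^l f\|_{L^r}$ yields
$$\|\Lambda^l f\|_{L^r} \lesssim 2^{j_0(\alpha - m)}\|\Lambda^m f\|_{L^q} + 2^{j_0(\alpha - k)}\|\Lambda^k f\|_{L^q}.$$

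The final step is to optimize over $j_0$ by choosing it so that the two contributions are balanced, i.e.\ $2^{j_0(k-m)}\sim\|\Lambda^k f\|_{L^q}/\|\Lambda^m f\|_{L^q}$; rounding to the nearest integer only changes an overall constant. Substituting back and using the identity $(\alpha-m)/(k-m)=\eta$ forced by the scaling relation, both balanced terms equal $\|\Lambda^m f\|_{L^q}^{1-\eta}\|\Lambda^k f\|_{L^q}^{\eta}$, which is the claimed inequality. The whole argument is essentially bookkeeping with dyadic blocks, and I foresee no genuine obstacle; the only point deserving attention is verifying $m<\alpha<k$ to guarantee convergence of both geometric series, which is automatic from $\eta\in(0,1)$ and the defining constraint on $\alpha$.
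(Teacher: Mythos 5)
The paper itself does not prove this proposition; it is quoted directly from \cite{bahouri2011fourier}, so there is no internal proof to match your argument against. For the main case $m\neq k$, $0<\eta<1$, your argument is exactly the standard one (and essentially the textbook proof): spectral localization plus the two Bernstein-type bounds $\|\dot\Delta_j\Lambda^l f\|_{L^r}\lesssim 2^{j\alpha}\|\dot\Delta_j f\|_{L^q}$ and $\|\dot\Delta_j f\|_{L^q}\lesssim 2^{-jm}\|\Lambda^m f\|_{L^q}$, $\|\dot\Delta_j f\|_{L^q}\lesssim 2^{-jk}\|\Lambda^k f\|_{L^q}$ (the latter via \eqref{2.100} applied to the multipliers $2^{jm}\varphi(2^{-j}\xi)|\xi|^{-m}$), then a geometric summation on each side of a cutoff $j_0$ and optimization in $j_0$. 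With $m<\alpha<k$ both series converge and the balancing choice of $j_0$ gives precisely the product $\|\Lambda^m f\|_{L^q}^{1-\eta}\|\Lambda^k f\|_{L^q}^{\eta}$; apart from the routine remark that one may assume both norms on the right are finite and nonzero, this part is complete and correct.

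The genuine gap is your treatment of the degenerate cases. When $\eta\in\{0,1\}$ or $m=k$, the exponent $\alpha-m$ (or $\alpha-k$) vanishes, there is no cutoff left to optimize, and summing the blockwise Bernstein bounds only yields $\|\Lambda^l f\|_{L^r}\lesssim\|\Lambda^m f\|_{\dot B^0_{q,1}}$; since $L^q$ embeds into $\dot B^0_{q,\infty}$ but not into $\dot B^0_{q,1}$, this does \emph{not} give the claimed bound, so these cases are not ``direct consequences of Bernstein alone.'' They are exactly the homogeneous Sobolev embedding $\|\Lambda^l f\|_{L^r}\lesssim\|\Lambda^m f\|_{L^q}$ with $l+N(1/q-1/r)=m$, which requires a different argument (real interpolation or a weak-type estimate) and in fact fails at endpoints such as $r=\infty$ (e.g. $\dot H^{N/2}\not\hookrightarrow L^\infty$). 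This is not a purely cosmetic point here, because the proof of Corollary \ref{col1} allows $\eta_2=0$ (the case $l+\frac Np-\frac Nr=\frac Np-1$), so the boundary case is actually invoked. To repair your write-up, either restrict the proposition to $0<\eta<1$ and $m\neq k$ (as in Proposition \ref{pra.7}) and treat the endpoint case by citing the Sobolev embedding separately under the appropriate non-endpoint conditions on $q,r$, or prove that embedding by an argument beyond blockwise Bernstein.
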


 With the aid of  Proposition \ref{pra.8}, we define $\eta_2$ by the relation
$$
m(1-\eta_2)+k\eta_2=l+N\Big(\frac{1}{p}-\frac{1}{r}\Big),
$$
where $m=\frac{N}{p}-1$ and $k=-\sigma_1-N(\frac{1}{2}-\frac{1}{p})+\varepsilon$ with $\varepsilon>0$ small enough. When $l\in\mathbb{R}$ satisfying $-\sigma_1
-\frac{N}{2}+\frac{N}{p}<l+\frac{N}{p}-\frac{N}{r}\leq \frac{N}{p}-1$, it is easy to see that $\eta_2\in [0,1)$. As a consequence, we conclude  by $\dot{B}_{p,1}^0\hookrightarrow L^p$ that
\be\nonumber
\begin{split}
&\quad\|\Lambda^l(a,\bv)\|_{L^r}\lesssim
\|\Lambda^m(a,\bv)\|_{L^p}^{1-\eta_2}\|\Lambda^k(a,\bv)\|_{L^p}^{\eta_2}\\[1ex]
&\lesssim \left[(1+t)^{-\frac{N}{2}(\frac{1}{2}-\frac{1}{p})-\frac{m+\sigma_1}{2}}\right]^{1-\eta_2}
\left[(1+t)^{-\frac{N}{2}(\frac{1}{2}-\frac{1}{p})-\frac{k+\sigma_1}{2}}\right]^{\eta_2}
=(1+t)^{-\frac{N}{2}(\frac{1}{2}-\frac{1}{r})-\frac{l+\sigma_1}{2}}
\end{split}
\ee
for $p\leq r\leq \infty$. Similarly,  we define $\eta_3$ by the relation
$
m(1-\eta_3)+k\eta_3=n+N\Big(\frac{1}{p}-\frac{1}{r}\Big)
$
and obtain that
\be\nonumber
\begin{split}
&\quad\|\Lambda^n\theta\|_{L^r}\lesssim
\|\Lambda^m\theta\|_{L^p}^{1-\eta_3}\|\Lambda^k\theta\|_{L^p}^{\eta_3}\\[1ex]
&\lesssim \left[(1+t)^{-\frac{N}{2}(\frac{1}{2}-\frac{1}{p})-\frac{m+\sigma_1}{2}}\right]^{1-\eta_3}
\left[(1+t)^{-\frac{N}{2}(\frac{1}{2}-\frac{1}{p})-\frac{k+\sigma_1}{2}}\right]^{\eta_3}
=(1+t)^{-\frac{N}{2}(\frac{1}{2}-\frac{1}{r})-\frac{n+\sigma_1}{2}}
\end{split}
\ee
provided that $p\leq r\leq \infty$ and $-\sigma_1
-\frac{N}{2}+\frac{N}{p}<n+\frac{N}{p}-\frac{N}{r}\leq \frac{N}{p}-2$.
Thus, we finish the proof of Corollary \ref{col1}.
\providecommand{\href}[2]{#2}
\providecommand{\arxiv}[1]{\href{http://arxiv.org/abs/#1}{arXiv:#1}}
\providecommand{\url}[1]{\texttt{#1}}
\providecommand{\urlprefix}{URL }

%\bibliographystyle{aims}
%\bibliography{biequnyi-ref}

\end{document}